
%
\documentclass[reqno]{amsart}
\usepackage{cite}
\allowdisplaybreaks
\date{\today}

\pagenumbering{arabic} \numberwithin{equation}{section}

\newcommand{\dv}{\mathrm{div}\,}

\newtheorem{Theorem}{Theorem}[section]
\newtheorem{Lemma}{Lemma}[section]

\newtheorem{Remark}{Remark}[section]


\begin{document}

\title[incompressible viscous MHD equations]
 {Zero kinematic viscosity-magnetic diffusion limit
 of the incompressible  viscous magnetohydrodynamic equations with Navier boundary conditions}

\author{Fucai Li}
\address{ Department of  Mathematics,
 Nanjing  University, Nanjing 210093, P.R.China}
\email{fli@nju.edu.cn}
 \author[Zhipeng Zhang]{Zhipeng Zhang}
\address{Department of Mathematics, Nanjing University, Nanjing
 210093, P.R. China}
 \email{zhpzhp@aliyun.com}

%
%

\begin{abstract}
We investigate the zero kinematic viscosity-magnetic diffusion limit of the incompressible viscous magnetohydrodynamic  equations  with Navier boundary conditions in a smooth bounded domain $\Omega\subset\mathbb{R}^3$.
We obtain the uniform regularity of solutions with respect to the kinematic viscosity coefficient and the magnetic diffusivity coefficient.
These solutions are uniformly bounded in a conormal Sobolev space and $W^{1,\infty}(\Omega)$ which allow us to take the
zero kinematic viscosity-magnetic diffusion limit.
 Moreover, we also get the rates of convergence.
 \end{abstract}

\keywords{Incompressible viscous MHD equations, ideal incompressible  MHD equations,
Navier boundary condition, zero kinematic viscosity-magnetic diffusion limit}
\subjclass[2000]{35Q30, 76D03, 76D05, 76D07}

\maketitle


\section{Introduction}
We consider the following incompressible viscous magnetohydrodynamic (MHD) equations (\!\!\cite{Da,DB})
\begin{align}
&\partial_tv^\epsilon-\epsilon\Delta v^\epsilon+v^\epsilon\cdot\nabla v^\epsilon
-H^\epsilon\cdot\nabla H^\epsilon-\frac{1}{2}\nabla(|v^\epsilon|^2-|H^\epsilon|^2)+\nabla p^\epsilon=0,\label{1.1}\\
&\partial_tH^\epsilon-\epsilon\Delta H^\epsilon+v^\epsilon\cdot\nabla H^\epsilon-H^\epsilon\cdot\nabla v^\epsilon=0,\label{1.2}\\
&\dv v^\epsilon=\dv H^\epsilon=0\label{1.3}
\end{align}
in $(0,T)\times\Omega$, where $\Omega$ is a smooth bounded domain of $\mathbb{R}^3$. The unknowns $v^\epsilon$ and $H^\epsilon$ represent
the fluid velocity and  the magnetic field, respectively. The pressure  $p^\epsilon$ can be  recovered from $v^\epsilon$ and $H^\epsilon$
via an explicit Calder車n-Zygmund singular integral operator (\!\!\cite{Ch}).

We add to $v^\epsilon$ and $H^\epsilon$ the following initial and boundary conditions
\begin{align}
&v^\epsilon\cdot n=0,\quad (Sv^\epsilon\cdot n)_\tau=-\zeta v^\epsilon_\tau\quad\text{on}~~~~\partial\Omega,\label{1.4.1}\\
&H^\epsilon\cdot n=0,\quad (SH^\epsilon\cdot n)_\tau=-\zeta H^\epsilon_\tau \quad\text{on}~~~~\partial\Omega,\label{1.4.2}\\
&(v^\epsilon,H^\epsilon)|_{t=0}=(v_0,H_0)\quad\text{in}\quad\Omega,\label{1.4}
\end{align}
where $n$ stands for the outward unit normal vector to $\Omega$, $\zeta$ is a coefficient measuring the tendency of the fluid to slip on the boundary, $S$ is the strain tensor defined by
\begin{equation}
Su=\frac{1}{2}(\nabla u+\nabla u^t),\nonumber
\end{equation}
where $\nabla u^t$ denotes the transpose of the matrix $\nabla u$, and $u_\tau$ stands for the tangential part of $u$ on $\partial\Omega$, i.e.
\begin{equation}
u_\tau=u-(u\cdot n)n.\nonumber
\end{equation}

This kind boundary condition \eqref{1.4.1}
was introduced by Navier in \cite{NC} to show that the velocity is propositional to the tangential part of the stress.
It allows the fluid slip along the boundary and are often used to model rough boundaries.
The Navier boundary condition \eqref{1.4.1} can be generalized to the following form (\!\!\cite{GJ})
\begin{align}
 u\cdot n=0,~~(Su\cdot n)_\tau+Au=0,\label{1.11}
\end{align}
where $A$ is a $(1,1)$-type tensor on the boundary $\partial \Omega$. When $A=\zeta\, \text{Id}$ (here Id denotes the  identity matrix), \eqref{1.11} is  reduced to the standard Navier boundary condition.
For smooth functions, we can get the form of the vorticity
\begin{align}
 u\cdot n=0,~~n\times \omega=[Bu]_\tau\quad\text{on}\quad\partial\Omega,\label{1.12}
\end{align}
where $\omega=\nabla\times u$ is the vorticity and $B=2(A-S(n))$ (\!\!\cite{YZ}).

In this paper  we are interested in the existence of strong solution to the problem \eqref{1.1}-\eqref{1.4} with uniform bounds on an interval of time independent of $\epsilon$
and taking the  limit $\epsilon \rightarrow 0$ to obtain  the ideal incompressible MHD equations, i.e.
\begin{align}
&\partial_tv+v\cdot\nabla v
-H\cdot\nabla H-\frac{1}{2}\nabla(|v|^2-|H|^2)+\nabla p=0,\label{1.7}\\
&\partial_tH+v\cdot\nabla H-H\cdot\nabla v=0,\label{1.8}\\
&\dv v=\dv H=0\label{1.9}
\end{align}
with the following slip boundary conditions:
\begin{equation}\label{1.10}
v\cdot n=H\cdot n=0.
\end{equation}

When taking $H^\epsilon=0$ in the system \eqref{1.1}-\eqref{1.3}, it is reduced to the classical incompressible
Navier-Stokes equations and there are many
 literature on  the vanishing viscosity limit of it. In the case that there is no boundary, a uniform time of existence and the vanishing viscosity limit have been obtained, see \cite{KT,MN,SH}. When the boundary appear, it is usually difficult to do higher order energy estimates near boundary because of the appearing of the boundary layer
 \cite{OS}.
In particular,  for the incompressible Navier-Stokes equations  with  no-slip boundary condition,
the  vanishing viscosity limit of it is wildly open except when the
initial data is analytic \cite{SC1,SC2} or the initial vorticity is located away from the boundary in the
two-dimensional half plane \cite{Ma}. On the other hand, considering the incompressible Navier-Stokes system with  Navier
boundary conditions, more results are available, see, for example, \cite{XZ1,HB,HB1,HC,IS,BS}.
Xiao and Xin \cite{XZ1} investigate the vanishing viscosity limit to incompressible Navier-Stokes equation with the boundary conditions
\begin{align}
 u\cdot n=0,~~n\times \omega=0\quad\text{on} \quad\partial\Omega.\label{1.13}
\end{align}
Because the main part in the boundary layer vanishes (i.e. $V=0$  in \eqref{BL} below), they can obtain the local existence of strong solution with some uniform bounds in $H^3(\Omega)$
and the vanishing viscosity limit. Their approaches overcame the compatibility issues of the nonlinear terms with \eqref{1.13}. The authors in \cite{HB} got uniform estimates in $W^{k,p}(\Omega)$ with $k\geq3$ and $p\geq2$. The main reason is that the boundary integrals vanishes on flat portions of the boundary, see also  \cite{HB1,HC}.
Later, the results in \cite{XZ1,HB} was generalized by Berselli and Spirito  \cite{BS} to  a general bounded domain  under certain restrictions on the initial data.
In order to analysis the effect of the boundary layer in a general bounded domain,  Iftimie and  Sueur \cite{IS} constructed the boundary layer for the incompressible
Navier-Stokes equations with the Navier boundary condition \eqref{1.4.1} in the form
\begin{align}\label{BL}
u^\epsilon(t,x)=u^\epsilon(t,x)+\sqrt{\epsilon}V\Big(t,x,\frac{\phi(x)}{\sqrt{\epsilon}}\Big)+O(\epsilon),
\end{align}
where the function $V$ vanishes for $x$ outside a small neighborhood of $\partial\Omega$ and $\phi(x)$ is the distance between $x$ and $\partial\Omega$ for $x$ in a neighborhood of $\partial\Omega$. 
The layers constructed in  \cite{IS} are of width $O(\sqrt{\epsilon})$ like the Prandtl layer \cite{OS}, but are of amplitude $O(\sqrt{\epsilon})$ (The Prandtl layer is of width $O(\sqrt{\epsilon})$ and of amplitude $O(1)$). So it is impossible to obtain the $H^3(\Omega)$ or $W^{2,p}(\Omega)$ ($p$ large enough) uniform estimates for the incompressible Navier-Stokes equations.
Recently,
Masmoudi and Rousset  \cite{MR} considered the the vanishing viscosity limit for the incompressible Navier-Stokes equation with the boundary condition \eqref{1.4.1} in anisotropic conormal Sobolev spaces which can eliminate the effects of normal derivatives near the boundary. 
They obtained uniform regularity and the convergence of the viscous solutions to the inviscid ones by compactness argument.
 Recently, some results in \cite{MR} was extended to the compressible isentropic Navier-Stokes equations with Navier boundary conditions \cite{WXY,M}.
Moreover, based on the results in \cite{MR}, the rates of convergence were obtained by Gie and Kelliher \cite{GJ} and Xiao and Xin \cite{YZ}, respectively.
%


 In \cite{XZW}, Xiao, Xin and Wu  studied the
inviscid limit for the  system \eqref{1.1}-\eqref{1.3} with the boundary conditions
 \begin{equation}
 \left\{
 \begin{array}{l}
 v^\epsilon\cdot n=0,\quad n\times \omega_v^\epsilon=0\quad\text{on}\quad\partial\Omega,\\
 H^\epsilon\cdot n=0,\quad n\times \omega_H^\epsilon=0\quad\text{on}\quad\partial\Omega,
 \end{array}
 \right.
 \end{equation}
  where they used the approaches similar to that in \cite{XZ1} and formulated the boundary value in a suitable functional setting so that the stokes operator is well behaved and the nonlinear terms fall into the desired functional spaces. These facts allow them to get the uniform regularity for the viscous incompressible MHD system through the Galerkin approximation and a priori energy estimates.

Here we investigate the
inviscid limit for the  system \eqref{1.1}-\eqref{1.3} with the Navier boundary conditions \eqref{1.4.1}-\eqref{1.4.2} in a $3D$ bounded domain in the framework of  anisotropic conormal Sobolev spaces. Due to the strong coupling between $v^\epsilon$ and $H^\epsilon$, a priori estimates become more complicated than that in \cite{MR} on the incompressible Navier-Stokes equations. We obtain uniform regularity of the solutions and, with this well-posedness theory, pursue the vanishing viscosity limit to the problem \eqref{1.1}-\eqref{1.4}.
Moreover, we also obtain   some rates of convergence for $v^\epsilon$ and $H^\epsilon$. Hence our results can be regarded as generalizations of those in \cite{GJ,MR,YZ} to incompressible MHD eqautions.


Our first result of this paper reads as follows.

\begin{Theorem}\label{Th1}
 Let $m$ be an integer satisfying $m>6$ and $\Omega$ be a $C^{m+2}$ domain. Assume that the initial data $(v_0,H_0)$ satisfy
 $$(v_0,H_0) \in \mathcal{E}^m, ~~(\nabla v_0,\nabla H_0)\in W^{1,\infty}_{co}(\Omega),$$
 $$\nabla\cdot v_0=\nabla\cdot H_0=0,~~ v_0\cdot n|_{\partial\Omega}=H_0\cdot n|_{\partial\Omega}=0.$$
Then, there exist $T_0>0$ and $\widetilde{C}$, independent of $\epsilon\in(0,1]$ and $|\zeta|\leq1$, such that there exists a unique solution of the problem \eqref{1.1}-\eqref{1.4}
satisfying
\begin{equation}
(v^\epsilon,H^\epsilon)\in C([0,T_0],\mathcal{E}^m)\nonumber
\end{equation}
and
\begin{align}\label{1.16}
&\sup_{t\in[0,T_0]}\big{\{}\|(v^\epsilon,H^\epsilon)(t)\|_m+\|(\nabla v^\epsilon,\nabla H^\epsilon)(t)\|_{m-1}+
\|(\nabla v^\epsilon,\nabla H^\epsilon)(t)\|_{1,\infty}\big{\}}\nonumber\\
&\qquad \quad +\epsilon\int^{T_0}_0(\|\nabla^2v^\epsilon(t)\|^2_{m-1}+\|\nabla^2H^\epsilon(t)\|^2_{m-1})dt\leq \widetilde{C},
\end{align}
Here $ \mathcal{E}^m:=\{u\,|\,u\in H^m_{co}(\Omega),\nabla u\in H^{m-1}_{co}(\Omega) \}$ and
the meanings of $ W^{1,\infty}_{co}(\Omega)$, $H^m_{co}(\Omega)$, $\|\cdot\|_m$ and $\|\cdot\|_{m,\infty}$ will be explained in detail in next section.
 \end{Theorem}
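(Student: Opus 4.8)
The plan is to derive a priori estimates for the conormal quantities appearing in \eqref{1.16} that are uniform in $\epsilon\in(0,1]$ and $|\zeta|\le1$, to assemble them into a closed differential inequality yielding a uniform existence time $T_0$, and then to upgrade the estimates into an actual solution via local existence (for each fixed $\epsilon>0$ the system is parabolic) together with continuation; uniqueness follows from a standard energy estimate on the difference of two solutions. A structural simplification underlies everything: absorbing the gradient terms into a modified pressure $\nabla\pi:=\nabla p^\epsilon-\tfrac12\nabla(|v^\epsilon|^2-|H^\epsilon|^2)$ and passing to the Elsässer variables $w^\pm:=v^\epsilon\pm H^\epsilon$ diagonalizes the strong coupling into the symmetric system $\partial_t w^\pm+w^\mp\cdot\nabla w^\pm-\epsilon\Delta w^\pm+\nabla\pi=0$, $\dv w^\pm=0$, in which each field is transported by the other. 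Since both $v^\epsilon$ and $H^\epsilon$ obey the same Navier conditions \eqref{1.4.1}--\eqref{1.4.2}, so do $w^\pm$, and the transport term becomes antisymmetric in the energy estimates, which is exactly what tames the coupling that makes the problem harder than the Navier--Stokes case of \cite{MR}.

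Introduce conormal vector fields $Z_i$ tangent to $\partial\Omega$ and apply $Z^\alpha$, $|\alpha|\le m$, to the $w^\pm$ equations, testing against $Z^\alpha w^\pm$. The leading transport term produces $\int(w^\mp\cdot\nabla Z^\alpha w^\pm)\cdot Z^\alpha w^\pm=0$ by $\dv w^\mp=0$ and $w^\mp\cdot n|_{\partial\Omega}=0$, so only the commutators $[Z^\alpha,w^\mp\cdot\nabla]w^\pm$ survive, and these are bounded by $\|w\|_m$ and the Lipschitz quantity $\|\nabla w\|_{1,\infty}$. The viscous term yields the favorable dissipation $\epsilon\|\nabla Z^\alpha w^\pm\|^2$ up to the commutator $[Z^\alpha,\Delta]$ and a boundary integral $\epsilon\int_{\partial\Omega}(\partial_n Z^\alpha w^\pm)\cdot Z^\alpha w^\pm$, which is controlled by rewriting the Navier condition as an expression for $\partial_n w^\pm_\tau$ in terms of lower order boundary data, so that the boundary integral does not spoil uniformity in $\epsilon$. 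The pressure is removed by an elliptic estimate: the divergence of the momentum equation gives $\Delta\pi=-\dv(w^\mp\cdot\nabla w^\pm)$ with a Neumann condition inherited from $w^\pm\cdot n=0$, which bounds $\nabla\pi$ in conormal norm by the quantities already under control.

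To control $\|(\nabla v^\epsilon,\nabla H^\epsilon)\|_{m-1}$ it suffices to estimate $\nabla w^\pm$, and since $\dv w^\pm=0$, this reduces via the identity $\Delta w^\pm=\nabla\dv w^\pm-\cl\cl w^\pm$ to estimating the vorticities $\omega^\pm:=\cl w^\pm$ in $H^{m-1}_{co}$. The $\omega^\pm$ satisfy transport--diffusion equations $\partial_t\omega^\pm+w^\mp\cdot\nabla\omega^\pm-\epsilon\Delta\omega^\pm=\omega^\mp\cdot\nabla w^\pm+\cdots$, whose source terms are products of quantities already estimated, and the boundary condition \eqref{1.12}, namely $n\times\omega^\pm=[Bw^\pm]_\tau$, again allows the diffusion boundary integral to be absorbed. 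Conormal energy estimates on $\omega^\pm$ then close the gradient bound uniformly in $\epsilon$, and likewise produce the time-integrated dissipation $\epsilon\int_0^{T_0}(\|\nabla^2 v^\epsilon\|_{m-1}^2+\|\nabla^2 H^\epsilon\|_{m-1}^2)\,dt$ in \eqref{1.16}.

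The genuine obstacle is the uniform Lipschitz bound $\|(\nabla v^\epsilon,\nabla H^\epsilon)\|_{1,\infty}$, which is not supplied by any conormal Sobolev embedding that survives as $\epsilon\to0$ near the boundary, yet is needed to close the commutator terms above---so it must be bootstrapped jointly with the energy estimates. Working in a boundary collar with normal and tangential coordinates, the normal derivative of the normal component is recovered from $\dv w^\pm=0$ and conormal derivatives, while the normal derivatives of the tangential components are expressed through $\omega^\pm$; for the latter one derives a transport--diffusion equation for a cutoff-localized tangential vorticity and performs a maximum-principle-type $L^\infty$ estimate, tracking carefully that the diffusion term and the boundary contribution stay bounded independently of $\epsilon$. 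The Lorentz and induction coupling enters only as interaction source terms that, thanks to the Elsässer structure, are products of already-controlled quantities. Collecting all estimates gives a closed inequality $\tfrac{d}{dt}N(t)\le C\,P(N(t))$ for $N(t):=\|(v^\epsilon,H^\epsilon)\|_m+\|(\nabla v^\epsilon,\nabla H^\epsilon)\|_{m-1}+\|(\nabla v^\epsilon,\nabla H^\epsilon)\|_{1,\infty}$, with $C,P$ independent of $\epsilon$ and $\zeta$; this furnishes $T_0$ and $\widetilde{C}$, and combined with local existence, continuation, and the difference estimate yields the unique solution in $C([0,T_0],\mathcal{E}^m)$ satisfying \eqref{1.16}.
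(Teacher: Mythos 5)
Your Elsässer reduction $w^\pm=v^\epsilon\pm H^\epsilon$ is legitimate here (the kinematic viscosity and magnetic diffusivity are both $\epsilon$, and both fields satisfy the same Navier condition with the same $\zeta$, so $w^\pm$ do too), and it is in fact a global version of the device the paper only deploys at the very last step, where the coupled equations for the boundary-corrected vorticity quantities $\overline{\eta}_v^\epsilon,\overline{\eta}_H^\epsilon$ are diagonalized by passing to $\eta_1=\overline{\eta}_v^\epsilon+\overline{\eta}_H^\epsilon$ and $\eta_2=\overline{\eta}_v^\epsilon-\overline{\eta}_H^\epsilon$ so that the scalar transport--diffusion $L^\infty$ lemma of \cite{MR} applies. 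The architecture is therefore reasonable, but there is a genuine gap in your third step: you propose to obtain $\|\nabla w^\pm\|_{m-1}$ by conormal energy estimates performed \emph{directly on the vorticities} $\omega^\pm$, asserting that the boundary condition $n\times\omega^\pm=[Bw^\pm]_\tau$ lets the diffusion boundary integral be absorbed. It does not. Testing the $\omega^\pm$ equation with $Z^\alpha\omega^\pm$, $|\alpha|=m-1$, and integrating $\epsilon\Delta\omega^\pm$ by parts leaves $\epsilon\int_{\partial\Omega}\partial_nZ^\alpha\omega^\pm\cdot Z^\alpha\omega^\pm$, which involves the \emph{normal derivative} of the vorticity on the boundary, i.e.\ two normal derivatives of $w^\pm$; the Navier condition prescribes only the tangential part of $\omega^\pm$ itself on $\partial\Omega$, and $\partial_n\omega^\pm|_{\partial\Omega}$ is not bounded by any quantity in the scheme uniformly in $\epsilon$. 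The entire point of the construction in \cite{MR} and in this paper is to replace $\omega^\pm$ by a corrected quantity such as $\eta^\pm:=\chi\Pi(Sw^\pm\, n+\zeta w^\pm)$ (equivalently $\chi\Pi(\omega^\pm\times n-2w^\pm\cdot\nabla n+2\zeta w^\pm)$), which still controls $\Pi\partial_n w^\pm$ modulo $\|w^\pm\|_m$ but \emph{vanishes identically on} $\partial\Omega$, so the offending boundary term disappears. Your sketch never introduces this subtraction for the $H^{m-1}_{co}$ estimate, so that step fails as written.

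A second, smaller omission concerns the pressure: the Neumann datum is not just $-(w^\mp\cdot\nabla w^\pm)\cdot n$, since taking the normal trace of the momentum equation on $\partial\Omega$ also leaves the viscous term $\epsilon\Delta w^\pm\cdot n$. The paper must split $P^\epsilon=P_1^\epsilon+P_2^\epsilon$ as in \eqref{3.8}--\eqref{3.9} and prove $\|\nabla P_2^\epsilon\|_{m-1}\leq C\epsilon(\|v^\epsilon\|_m+\|\nabla v^\epsilon\|_{m-1})$ by a boundary computation that uses the Navier condition to reduce $\Delta v^\epsilon\cdot n|_{\partial\Omega}$ to first-order traces, precisely so that the term $\epsilon^{-1}\|\nabla P_2^\epsilon\|_{m-1}^2$ generated in the conormal energy estimate closes. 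Both gaps are repairable with the paper's devices (and your Elsässer variables would actually streamline the final $L^\infty$ step), but they are the two places where the uniformity in $\epsilon$ is actually earned, and your proposal passes over both.
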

 \begin{Remark}\label{R1.2}
 When the Navier boundary conditions \eqref{1.4.1} and \eqref{1.4.2} are replaced by the following
 \begin{equation}\label{GNB1}
 \left\{
 \begin{array}{l}
 v^\epsilon\cdot n=0,\quad n\times\omega_v^\epsilon=[Bv^\epsilon]_\tau\quad\text{on}\quad\partial\Omega,\\
 H^\epsilon\cdot n=0,\quad n\times\omega_H^\epsilon=[BH^\epsilon]_\tau\quad\text{on}\quad\partial\Omega,
 \end{array}
 \right.
 \end{equation}
we can also obtain the same results as those in Theorem \ref{Th1}, where $B=2(A-S(n))$ and A is a $(1,1)$-type tensor on the boundary $\partial \Omega$.
 \end{Remark}
 \begin{Remark}\label{R1.3}
 Theorem \ref{Th1} still holds if we replace the boundary conditions \eqref{1.4.1} and \eqref{1.4.2} by the slightly generalized one:
 \begin{equation}\label{GNB2}
 \left\{
 \begin{array}{l}
 v^\epsilon\cdot n=0,\quad(Sv^\epsilon\cdot n)_\tau=-\zeta_1 v^\epsilon\quad\text{on}\quad\partial\Omega,\\
 H^\epsilon\cdot n=0,\quad(SH^\epsilon\cdot n)_\tau=-\zeta_1 H^\epsilon\quad\text{on}\quad\partial\Omega,
 \end{array}
 \right.
 \end{equation}
 where $\zeta_1$ and $\zeta_2$ are two different constants.
 \end{Remark}

We now give some comments on the proof of Theorem \ref{Th1}.
The main steps of the proof  are similar to those in \cite{MR} in some sense. However, due to the strong coupling between $v^\epsilon$ and $H^\epsilon$, we need to overcome some new difficulties and to face more complicated energy estimates.
First, we get a conormal energy estimates in $H^m_{co}$ (see the definition in next section) for $(v^\epsilon,H^\epsilon)$. Here, we define $P^\epsilon_1+P^\epsilon_2:=p^\epsilon-\frac{1}{2}(|v^\epsilon|^2-|H^\epsilon|^2)$, where $P^\epsilon_1$ and $P^\epsilon_2$ satisfy corresponding boundary value problems (see \eqref{3.8} and \eqref{3.9} below), respectively. By doing this decomposition, we can avoid higher order terms which are out of control.
In the second step, we estimate $\|(\partial_n v^\epsilon,\partial_nH^\epsilon)\|_{m-1}$. Due to the incompressible conditions \eqref{1.3}, both $\partial_nv^\epsilon\cdot n$ and $\partial_nH^\epsilon\cdot n$ can be easily controlled by the $H^m_{co}$ norm of $(v^\epsilon,H^\epsilon)$. Thanks to the the Nvier-slip boundary conditions, it is convenient to study $\eta^\epsilon_v=(Sv^\epsilon n+\zeta v^\epsilon)_\tau$ and $\eta^\epsilon_H=(SH^\epsilon n+\zeta H^\epsilon)_\tau$. We find that $\eta^\epsilon_v$ and $\eta^\epsilon_H$ satisfy equations with homogeneous Dirichlet boundary conditions, and we shall thus prove a control of $\|(\eta^\epsilon_v,\eta^\epsilon_H)\|_{m-1}$ by performing energy estimates on the equations solved by $(\eta^\epsilon_v,\eta^\epsilon_H)$.  The third step is to estimate $P^\epsilon_1$ and $P^\epsilon_2$. Note that they satisfy nonhomogeneous elliptic equations with Neumann boundary conditions. By using the regularity theory of elliptic equations with Neumann boundary conditions, we get the estimates on the pressure terms. Finally, we need to estimate $\|\nabla v^\epsilon\|_{1,\infty}$ and $\|\nabla H^\epsilon\|_{1,\infty}$. Similar to the second step, we find equivalent quantities $\overline{\eta}^\epsilon_v$ and $\overline{\eta}^\epsilon_H$. However, due to the strong coupling between $\overline{\eta}^\epsilon_v$ and $\overline{\eta}^\epsilon_H$, we cannot deal with the system
on $\overline{\eta}^\epsilon_v$ and $\overline{\eta}^\epsilon_H$ directly as that in \cite{MR}. Instead, we need further to introduce another two quantities  $\eta_1:=\overline{\eta}^\epsilon_v+\overline{\eta}^\epsilon_H $ and $\eta_2:=\overline{\eta}^\epsilon_v-\overline{\eta}^\epsilon_H$. We then estimate $\eta_1$ and $\eta_2$, respectively.


Based on Theorem \ref{Th1} and Remark \ref{R1.2}, we justify the vanishing viscosity limit as follows:
\begin{Theorem}\label{Th3}
 Assume that $(v_0,H_0)$  belong to $H^3(\Omega)$ and satisfy the same assumptions as in
 Theorem \ref{Th1}. Let $(v,H)$ be the smooth solution of \eqref{1.7}-\eqref{1.10} with the initial data $(v,H)|_{t=0}=(v_0,H_0) $ on $[0,T_1]$.
  Let $(v^\epsilon,H^\epsilon)$ be the solution of \eqref{1.1}-\eqref{1.3} with the boundary condition \eqref{GNB1} and the initial data $(v^\epsilon,H^\epsilon)|_{t=0}=(v_0,H_0)$.
  Then there exists a $T_2=\min\{T_0,T_1\}>0$ such that
\begin{align}
\| v^{\epsilon}&-v \|^2_{L^2}+\| H^{\epsilon}-H\|^2_{L^2}\nonumber\\
&+\epsilon\int_0^t(\|(v^{\epsilon}-v)(s)\|_{H^1}^2+\|( H^{\epsilon}-H)(s)\|_{H^1}^2)\,ds\leq C\epsilon^\frac{3}{2}\quad on \quad [0,T_2],\label{1.19}\\
\|v^{\epsilon}&-v\|^2_{H^1}+\| H^{\epsilon}-H\|^2_{H^1}\nonumber\\
&+\epsilon\int_0^t(\|(v^{\epsilon}-v)(s)\|^2_{H^2}
+\|( H^{\epsilon}-H)(s)\|_{H^2}^2)\,ds\leq C\epsilon^\frac{1}{2}\quad on\quad[0,T_2]\label{1.20}
\end{align}
for $\epsilon$ small enough. Consequently,
\begin{equation}\label{1.21}
\begin{split}
\|v^{\epsilon}&-v\|^p_{W^{1,p}}+\|H^{\epsilon}-H\|^p_{W^{1,p}}\leq C\epsilon^\frac{1}{2}\quad on\quad[0,T_2]
\end{split}
\end{equation}
for $2\leq p<\infty$ and $\epsilon$ small enough, and
\begin{equation}\label{1.22}
\begin{split}
\|v^{\epsilon}-v\|_{L^\infty({[0,T_2]\times\Omega})}+\|H^{\epsilon}-H\|_{L^\infty([0,T_2]\times\Omega)}\leq C \epsilon^\frac{3}{10}.
\end{split}
\end{equation}
\end{Theorem}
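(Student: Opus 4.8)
The plan is to estimate the difference between the viscous and the ideal solution by energy methods, first in $L^2$, then in $H^1$, and to read off \eqref{1.21} and \eqref{1.22} from these by interpolation. Write $\|\cdot\|$ for the $L^2(\Omega)$ norm and set $w:=v^\epsilon-v$, $G:=H^\epsilon-H$, with $q$ the difference of the two modified pressures. Since $v^\epsilon,v$ and $H^\epsilon,H$ all satisfy the no-penetration condition, $w\cdot n=G\cdot n=0$ on $\partial\Omega$ and $\dv w=\dv G=0$; subtracting \eqref{1.7}--\eqref{1.9} from \eqref{1.1}--\eqref{1.3} gives $\partial_t w-\epsilon\Delta w+(\text{nonlinear})+\nabla q=\epsilon\Delta v$ together with the companion equation for $G$, the only inhomogeneity being the forcing $\epsilon\Delta v,\ \epsilon\Delta H$. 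Testing against $(w,G)$ and summing, the transport terms $\int_\Omega(v\cdot\nabla w)\cdot w$ and $\int_\Omega(v\cdot\nabla G)\cdot G$ vanish, the induction--Lorentz coupling collapses to $-\int_\Omega H\cdot\nabla(w\cdot G)=0$, and $\int_\Omega\nabla q\cdot w=0$; every surviving nonlinear term carries a factor $\nabla v^\epsilon$ or $\nabla H^\epsilon$ and is therefore $\le C(\|w\|^2+\|G\|^2)$ by the uniform $W^{1,\infty}$ bound of Theorem \ref{Th1}.

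The only delicate contribution is the boundary term produced by the viscosity. Integration by parts of $-\epsilon\Delta w$ against $w$ leaves the dissipation $\epsilon\|\nabla w\|^2$ and the term $-\epsilon\int_{\partial\Omega}(\partial_n w)_\tau\cdot w_\tau$ (using $w=w_\tau$ on $\partial\Omega$). Its integrand is $O(1)$ pointwise by the uniform $W^{1,\infty}$ bound, so the term is $\le C\epsilon\|w\|_{L^2(\partial\Omega)}$; the trace interpolation $\|w\|_{L^2(\partial\Omega)}^2\le C\|w\|\,(\|w\|+\|\nabla w\|)$ and Young's inequality bound it by $\tfrac\epsilon4\|\nabla w\|^2+C\epsilon\|w\|^{2/3}$. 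With $y:=\|w\|^2+\|G\|^2$ one thus obtains $\tfrac{d}{dt}y\le Cy+C\epsilon y^{1/3}+C\epsilon^2$, $y(0)=0$. The term $C\epsilon y^{1/3}$ must be kept: absorbing it crudely would give only $y\lesssim\epsilon$, whereas the substitution $y=\epsilon^{3/2}z$ turns the inequality into $z'\le Cz+Cz^{1/3}+C\epsilon^{1/2}$ and a nonlinear Gronwall argument yields $y\le C\epsilon^{3/2}$; integrating the dissipation then gives \eqref{1.19}.

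For \eqref{1.20} I would repeat the estimate one order higher, testing the differentiated difference system against $(\nabla w,\nabla G)$ -- or, to avoid $\nabla q$ altogether, working with the vorticity differences $\nabla\times w$, $\nabla\times G$. The dissipation becomes $\epsilon(\|\nabla^2 w\|^2+\|\nabla^2 G\|^2)$, the forcing $\epsilon\nabla\Delta v$ stays $O(\epsilon)$ in $L^2$, and the nonlinear commutators are controlled by the uniform conormal and $W^{1,\infty}$ bounds of Theorem \ref{Th1} together with the $L^2$ rate just obtained. The leading MHD coupling no longer cancels at this order, so -- exactly as for the $W^{1,\infty}$ estimate in Theorem \ref{Th1} -- I would diagonalize by estimating $w+G$ and $w-G$ separately; the boundary terms are handled as above through the Navier conditions and the uniform bounds, and a second nonlinear Gronwall produces the rate $\epsilon^{1/2}$. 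This $H^1$ estimate, with its boundary contributions and the strong coupling, is the step I expect to be the main obstacle.

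Finally, \eqref{1.21} and \eqref{1.22} are pure interpolation. Interpolating $\nabla w$ in $L^p$ between $L^2$ and $L^\infty$ gives $\|w\|_{W^{1,p}}^p\le\|w\|_{W^{1,2}}^2\,\|w\|_{W^{1,\infty}}^{p-2}\le C\epsilon^{1/2}$ for $2\le p<\infty$, using \eqref{1.20} and the uniform bound $\|w\|_{W^{1,\infty}}\le C$. For the $L^\infty$ bound I would invoke the three-dimensional Gagliardo--Nirenberg inequality $\|w\|_{L^\infty}^{5/2}\le C\|w\|_{L^2}\,\|\nabla w\|_{L^\infty}^{3/2}$; combined with $\|w\|_{L^2}\le C\epsilon^{3/4}$ from \eqref{1.19} and $\|\nabla w\|_{L^\infty}\le C$, it yields precisely $\|w\|_{L^\infty}\le C\epsilon^{3/10}$, and the same for $G$.
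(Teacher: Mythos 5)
Your $L^2$ estimate, and the interpolation steps giving \eqref{1.21} and \eqref{1.22}, are essentially the paper's own argument (Lemma \ref{L5.1}): the same cancellations in the nonlinear and coupling terms, the same treatment of the viscous boundary term via the trace inequality $|w|_{L^2(\partial\Omega)}^2\le C\|w\|(\|w\|+\|\nabla w\|)$, and the same Gagliardo--Nirenberg exponents $2/5$, $3/5$ for the $\epsilon^{3/10}$ rate. One small correction to your bookkeeping: no nonlinear Gronwall is needed, and your claim that crude absorption only gives $y\lesssim\epsilon$ is not right. Young's inequality with exponents $3$ and $3/2$ gives $\epsilon y^{1/3}\le \tfrac13 y+\tfrac23\epsilon^{3/2}$, so the differential inequality becomes $y'\le Cy+C\epsilon^{3/2}$ and linear Gronwall with $y(0)=0$ already yields $y\le C\epsilon^{3/2}$; your substitution $y=\epsilon^{3/2}z$ also works, but it is not necessary.

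The genuine gap is the $H^1$ estimate \eqref{1.20}, which you correctly identify as the main obstacle but for which your sketch would not go through. If you test the curled difference system against $\omega_w=\nabla\times w$, integration by parts of $-\epsilon\Delta\omega_w$ produces the boundary term $\epsilon\int_{\partial\Omega}\partial_n\omega_w\cdot\omega_w$, which involves \emph{second} normal derivatives of $v^\epsilon$; these are not controlled by the uniform bounds of Theorem \ref{Th1} (only conormal regularity of $\nabla v^\epsilon$ is available), and this is exactly why the paper refuses to pair the equations with $\Delta w$ directly. The paper's device is to test \eqref{5.2}--\eqref{5.3} against the Leray-projected Laplacians $P\Delta w$, $P\Delta G$, to prove the elliptic estimate $\|u\|_{H^2}\le\|P\Delta u\|+\|u\|$ for fields satisfying \eqref{GNB1} (Lemma \ref{L4.3}, which rests on the Hodge decomposition of Lemma \ref{L2.4} and a Stokes-type boundary value problem), and to rewrite the resulting boundary term $\int_{\partial\Omega}\partial_t w\cdot(n\times\omega_w)$ --- which carries \emph{no} factor of $\epsilon$ --- as a total time derivative of boundary quadratic forms plus remainders, absorbing the former into a modified energy $E$. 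Moreover, since the ideal solution does not satisfy the Navier condition, the mismatch $[Bv]_\tau-n\times\omega_v\neq 0$ generates the term the paper calls $I_3$; its estimate (extension of $n$ and $B$ into $\Omega$, the identity $v\cdot\nabla w-w\cdot\nabla v=\nabla\times(w\times v)$ with $w\times v=\lambda n$ so that this term lies in the Leray space, and repeated integrations by parts to trade boundary integrals of $\omega_w$ for interior integrals of $w$) is where the rate degrades to $\epsilon^{1/2}$, fed by the $L^2$ rate $\|w\|\le C\epsilon^{3/4}$ through $\|w\|^{2/3}\le C\epsilon^{1/2}$. None of this machinery appears in your proposal, and "handle the boundary terms as above" does not apply because the $L^2$-level boundary terms were harmless only because they carried an explicit factor $\epsilon$. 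Incidentally, the diagonalization into $w\pm G$ is not needed here: at the vorticity level the symmetric MHD coupling still cancels upon summing the two energy identities; the paper uses that diagonalization only for the $W^{1,\infty}$ estimate inside the proof of Theorem \ref{Th1}.
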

We now outline the proof of Theorem \ref{Th3}. Our approaches are similar to those in \cite{YZ}, but due to the strong coupling between magnetic field and velocity field, we meet some new difficulties. We first give the rates of the convergence in  $L^\infty(0,T_2;L^2(\Omega))$ and $L^\infty([0,T_2]\times \Omega)$ by using an elementary energy estimate for the difference of the solutions between the incompressible viscous MHD equations and the ideal incompressible  MHD equations and the Gagliardo-Nirenberg interpolation inequality. Next, because we find that it is very difficult  to estimate some boundary terms caused by multiplying \eqref{5.2} by $\Delta(v^\epsilon-v)$ and \eqref{5.3} by $\Delta(H^\epsilon-H)$ directly in the proof of the rate of the convergence in $L^\infty(0,T_2;H^1(\Omega))$, we turn to consider the Stokes problem \eqref{4.34}-\eqref{4.36}. Indeed, we can get $\|u\|_{H^2}\leq\|P\Delta u\|+\|u\|\,\,\text{for}\,\, u\in W_{B},$ where $W_{B}$ is defined in Lemma \ref{L4.3} and $P$ is Leray projector. Finally, we replace $\Delta(v^\epsilon-v)$ and $\Delta(H^\epsilon-H)$ by $P\Delta(v^\epsilon-v)$ and $P\Delta(H^\epsilon-H)$ to do prove the rates of the convergence in $L^\infty(0,T_2;H^1(\Omega))$ and $L^\infty(0,T_2; W^{1,p}(\Omega))$ .

This paper is organized as follows. In the following section, we give some assumptions on the domain and the definitions on conormal Sobolev spaces, and present some inequalities.
 In Section \ref{Sec3}, we prove a priori energy estimates and give the proof of Theorem \ref{Th1}.
 Finally, we prove Theorem \ref{Th3} in Section \ref{Sec4}. Throughout the paper, we shall denote by $\|\cdot\|_{H^m }$ and $\|\cdot\|_{W^{1,\infty} }$ the usual Sobolev norms
  in $\Omega$ and  $\|\cdot\|$ for the standard $L^2$ norm. The letter $C$ is a positive number which may change from line to line, but independent of $\epsilon\in(0,1]$ and $|\zeta|\leq 1$.


\section{Preliminaries}\label{Sec2}

We first state the assumptions on the bounded domain $\Omega\subset\mathbb{R}^3$ and then introduce some norms.
We assume that $\Omega$  has a covering such that
\begin{equation}\label{c}
\Omega\subset\Omega_0 \cup_{k=1}^n\Omega_k,
 \end{equation}
where $\overline{\Omega_0}\subset\Omega$ and in each $\Omega_k$ there exists a function $\psi_k$ such that
\begin{align*}
\Omega\cup\Omega_k=\{\,x=(x_1,x_2,x_3)\,|\,x_3>\psi_k(x_1,x_2)\,\}\cup\Omega_k,\\
\partial\Omega\cup\Omega_k=\{\,x=(x_1,x_2,x_3)\,|\,x_3=\psi_k(x_1,x_2)\,\}\cup\Omega_k.
\end{align*}
We say that $\Omega$ is $C^m$ if the functions $\psi_k$ are $C^m$ functions.

To define the conormal Sobolev spaces, we consider $(Z_k)_{1\leq k\leq N}$, a finite set of generators of vector fields that are tangent to $\partial\Omega$, and set
\begin{align}
H^m_{co}(\Omega):=\big{\{} f\in L^2(\Omega)\,\big{|}\, Z^If\in L^2(\Omega)~~~ \text{for}~~\, |I|\leq m,\,\,\, m\in\mathbb{N}\big{\}},
\end{align}
where $I=(k_1,...,k_m)$, $Z^I:=Z_{k_1}\cdot\cdot \cdot Z_{k_m}$. We define the norm of $H^m_{co}(\Omega)$:
$$ \|f\|^2_m:=\sum_{|I|\leq m}\|Z^If\|^2_{L^2}.$$
We say a vector field, $u$, is in $H^m_{co}(\Omega)$ if each of its components is in $H^m_{co}(\Omega)$ and
$$ \|u\|^2_m:=\sum_{i=1}^3\sum_{|I|\leq m}\|Z^Iu_i\|^2_{L^2}$$
is finite. In the same way, we set
$$ \|f\|_{m,\infty}:=\sum_{|I|\leq m}\|Z^If\|_{L^\infty},$$
$$\|\nabla Z^mu\|^2:=\sum_{|I|\leq m}\|\nabla Z^Iu\|^2_{L^2},$$
and we say that $f\in W^{m,\infty}_{co}(\Omega)$ if $\|f\|_{m,\infty}$ is finite.
By using above covering of $\Omega$, we can assume that each vector field is supported in one of $\{\Omega_i\}_{i=0}^n$. Also, we note that the $\|\cdot\|_{m}$ norm yields a control of the standard $H^m$ norm in $\Omega_0$, whereas if $\Omega_i \cap \partial \Omega\neq {\emptyset}$, there is no control of the
normal derivatives.

Since $\partial\Omega$ is given locally by $x_3=\psi(x_1,x_2)$ (we omit the subscript $k$ for notational convenience), it is convenient to use the coordinates:
\begin{align}\label{3.a}
\Psi:(y,z)\mapsto(y,\psi(y)+z)=x.
\end{align}
A local basis is thus given by the vector fields $(\partial_{y^1},\partial_{y^1},\partial_z)$ where $\partial_{y^1}$ and $\partial_{y^2}$ are tangent to $\partial\Omega$ on the boundary
and in general $\partial_z$ is usually not a normal vector field. We sometimes use the notation $\partial_{y^3}$ for $\partial_z$. By using this parametrization, we can take suitable vector fields compactly supported in $\Omega_i$ in the definition of the $\|\cdot\|_m$ norms:
$$Z_i=\partial_{y^i}=\partial_i+\partial_i\psi\partial_z,~~i=1,2,\quad Z_3=\varphi(z)\partial_z,$$
where $\varphi(z)=\frac{z}{1+z}$ is a smooth and supported function in $(0,+\infty)$ and satisfies
$$\varphi(0)=0,~~\varphi'(0)>0,~~\varphi(z)>0\,\,\,\,\text{for}\,\,\,\,z>0.$$

In this paper, we shall still denote by $\partial_i, i=1,~2,~3$ or $\nabla$ the derivatives with respect to the standard coordinates of $\mathbb{R}^3$. The coordinates of a vector field $u$ in the basis $(\partial_{y^1},\partial_{y^1},\partial_z)$ will be denote by $u^i$, thus
$$u=u^1\partial_{y^1}+u^2\partial_{y^2}+u^3\partial_z.$$
We denote by $u_i$ the coordinates in the standard basis of $\mathbb{R}^3$, i.e.
$$u=u_1\partial_1+u_2\partial_2+u_3\partial_3.$$

Denote by $n$ the unit outward normal vector which is given locally by
$$n(x)=n(\Psi(y,z))=\frac{1}{\sqrt{1+|\nabla\psi(y)|^2}}
\left(
\begin{array}{c}
\partial_1\psi(y)\\
 \partial_2\psi(y)\\
 -1
\end{array}
\right)$$
and by $\Pi$ the orthogonal projection
$$\Pi(x)=\Pi(\Psi(y,z))u=u-[u\cdot n(\Psi(y,z))]n(\Psi(y,z) $$
which gives the orthogonal projector onto the tangent space of the boundary. Note that $n$ and $\Pi$ are defined in the whole $\Omega_k$ and do not depend on $z$. By using these notations, the Navier boundary conditions \eqref{1.4.1} and \eqref{1.4.2} read:
\begin{align}
&v^\epsilon\cdot n=0, \quad \ \Pi\partial_nv^\epsilon=\theta(v^\epsilon)-2\zeta\Pi v^\epsilon,\label{3.2}\\
&H^\epsilon\cdot n=0, \quad \Pi\partial_nH^\epsilon=\theta(H^\epsilon)-2\zeta\Pi H^\epsilon,\label{3.2.1}
\end{align}
where $\theta$ is the shape operator (second fundamental form) of the boundary, $\theta(v^\epsilon):=\Pi((\nabla n)v^\epsilon)$ and $\theta(H^\epsilon):=\Pi((\nabla n)H^\epsilon)$.

First, we introduce a well-known inequality.
\begin{Lemma}[\!\!\cite{RT,XZ1}]\label{L2.5} For $u\in H^s(\Omega) \,(s\geq1)$, we have
\begin{align}
\|u\|_{H^s(\Omega)}\leq\,C\,(\|\nabla\times u\|_{H^{s-1}(\Omega)}+\|\nabla\cdot u\|_{H^{s-1}(\Omega)}+\|u\|_{H^{s-1}(\Omega)}+|u\cdot n|_{H^{s-\frac{1}{2}}(\partial\Omega)}).\nonumber
\end{align}
\end{Lemma}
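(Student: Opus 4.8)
The plan is to prove the vector-field estimate of Lemma~\ref{L2.5} by reducing it to the classical $L^2$-based elliptic regularity theory for the $\mathrm{div}$--$\mathrm{curl}$ system (the div-curl lemma with boundary control), which is the natural functional-analytic tool whenever one wants to recover the full $H^s$ norm of a vector field from its divergence, curl, and normal trace. First I would recall the identity $-\Delta u = \cl\,\cl\, u - \nabla(\dv u)$ valid for smooth vector fields, so that controlling $\cl\, u$ and $\dv u$ morally controls all second derivatives of $u$; the normal-trace term $u\cdot n$ supplies the boundary data needed to close the estimate. Since $\Omega$ is a smooth bounded domain, the associated elliptic problem is well-posed and one has the standard a~priori inequality for the first-order system
\begin{equation}
\|u\|_{H^1(\Omega)}\leq C\big(\|\cl\, u\|_{L^2(\Omega)}+\|\dv u\|_{L^2(\Omega)}+\|u\|_{L^2(\Omega)}+|u\cdot n|_{H^{1/2}(\partial\Omega)}\big),\nonumber
\end{equation}
which is precisely the case $s=1$ of the lemma. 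This base case is the engine of the whole argument.

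Next I would run an induction on $s$. The key observation is that tangential derivatives commute nicely with the differential operators $\cl$ and $\dv$ up to lower-order commutators, while the boundary trace behaves well under the surface trace theorem. Concretely, to pass from order $s-1$ to order $s$ one differentiates the system, applies the inductive hypothesis to the derivatives of $u$, and collects the commutator terms; each commutator is of lower differential order and is therefore absorbed into the $\|u\|_{H^{s-1}(\Omega)}$ term on the right-hand side. The normal-trace term is upgraded from $|u\cdot n|_{H^{s-3/2}}$ to $|u\cdot n|_{H^{s-1/2}}$ using the $\frac12$-loss of the trace operator together with the regularity already gained on the tangential components. Assembling these pieces yields the stated inequality with the constant $C$ depending only on $s$ and on the $C^m$ regularity of $\partial\Omega$ (which is why the domain is assumed smooth enough).

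The main obstacle, and the step I would treat most carefully, is the handling of the normal component near the boundary and the corresponding boundary commutators. Away from $\partial\Omega$ the estimate is purely interior and follows from the constant-coefficient identity above; the difficulty is entirely concentrated in a tubular neighborhood of the boundary, where one must use the boundary parametrization $\Psi$ from \eqref{3.a} and the projector $\Pi$ to separate tangential from normal directions. In that region the normal derivative of the normal component $u\cdot n$ is not directly controlled by $\cl\, u$ and $\dv u$ alone, so one recovers it precisely through the trace data $|u\cdot n|_{H^{s-1/2}(\partial\Omega)}$ and the divergence relation $\partial_n(u\cdot n)=\dv u - (\text{tangential derivatives of }u\cdot\tau) - (\text{curvature terms})$; quantifying the curvature terms requires the $C^m$ smoothness of $\psi_k$. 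Since this is a standard and well-documented inequality (it is cited here to \cite{RT,XZ1}), I would not reproduce the full computation but rather invoke those references for the technical boundary estimates, emphasizing only that the smoothness hypothesis on $\Omega$ guarantees the constant $C$ is finite and uniform.
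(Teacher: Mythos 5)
The paper does not prove Lemma~\ref{L2.5} at all: it is quoted verbatim from the references \cite{RT,XZ1} (Temam's book and Xiao--Xin), so there is no in-paper argument to compare yours against. Your sketch is the standard route taken in those references and is essentially sound: interior control via the identity $-\Delta u=\cl\cl u-\nabla(\dv u)$, an induction on $s$ driven by tangential differentiation, recovery of the missing normal derivative of $u\cdot n$ from the divergence relation, and the trace theorem to convert boundary data at regularity $s-\tfrac12$ into the inductive trace data at $s-\tfrac32$ (note $(\partial_\tau u)\cdot n=\partial_\tau(u\cdot n)-u\cdot\partial_\tau n$, so the commutator with the normal is lower order, exactly as you say). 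The one place where your write-up leans on assertion rather than argument is the base case $s=1$: the inequality you call ``the standard a priori inequality for the first-order system'' is itself the nontrivial core of the lemma, and it does not follow directly from the second-order identity you quote, since at that level you only control $\cl u$ and $\dv u$ in $L^2$. The usual derivation lifts the normal trace (solve a Neumann problem to reduce to $u\cdot n=0$) and then uses the integration-by-parts identity $\|\nabla u\|^2=\|\cl u\|^2+\|\dv u\|^2+\int_{\partial\Omega}(\text{curvature})\,|u|^2$ for tangential fields, absorbing the boundary term by trace interpolation. Since you explicitly defer to \cite{RT,XZ1} for the technical boundary estimates, this is an acceptable level of detail for a quoted classical lemma, but if you intend the proof to be self-contained you should supply the $s=1$ case rather than take it as the engine.
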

Next, we introduce the Korn's inequlity which play an important role in energy estimates below.
\begin{Lemma} [Korn's inequality\cite{FP}]\label{L2.1} Let
$\Omega$ be a bounded Lipschitz domain of $\mathbb{R}^3$. There exists a constant $C>0$ depending only on $\Omega$ such that
\begin{equation}\nonumber
\|u\|_{H^1(\Omega)}\leq C\,(\|u\|_{L^2(\Omega)}+\|S(u)\|_{L^2(\Omega)}),\quad \forall~ u\in(H^1(\Omega))^3.
\end{equation}
\end{Lemma}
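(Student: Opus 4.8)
The plan is to prove this \emph{second Korn inequality} by reducing the control of the full gradient $\nabla u$ to the symmetric gradient $S(u)$ through a purely algebraic differential identity, and then invoking the Ne\v{c}as (Lions) negative-norm characterization of $L^2(\Omega)$, which is precisely the point where the Lipschitz regularity of $\partial\Omega$ is used. Since both sides of the asserted inequality are continuous with respect to the $H^1$ norm, by density of $C^\infty(\overline{\Omega})^3$ in $(H^1(\Omega))^3$ it suffices to establish the estimate for smooth $u$ and then pass to the limit.

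First I would record the key pointwise identity. Writing $S_{ij}(u)=\tfrac12(\partial_i u_j+\partial_j u_i)$, a direct computation (in which the mixed third-order cancellations are immediate) gives, for every triple $(i,j,k)$,
\[
\partial_j\partial_k u_i=\partial_k S_{ij}(u)+\partial_j S_{ik}(u)-\partial_i S_{jk}(u).
\]
Thus every second derivative of $u$ is a first derivative of the components of $S(u)$. Fixing $f=\partial_k u_i$ and viewing it as a distribution, we have on one hand $f\in H^{-1}(\Omega)$ with $\|f\|_{H^{-1}}\leq\|u\|_{L^2}$ (it is one derivative of an $L^2$ function), and on the other hand, by the identity above, each $\partial_j f$ is a first derivative of an $L^2$ function, so $\|\partial_j f\|_{H^{-1}}\leq C\,\|S(u)\|_{L^2}$ for every $j$.

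Next I would apply the Ne\v{c}as inequality: on a bounded Lipschitz domain there is a constant $C$ such that any distribution $f$ with $f\in H^{-1}(\Omega)$ and $\nabla f\in (H^{-1}(\Omega))^3$ in fact belongs to $L^2(\Omega)$, with
\[
\|f\|_{L^2(\Omega)}\leq C\big(\|f\|_{H^{-1}(\Omega)}+\|\nabla f\|_{H^{-1}(\Omega)}\big).
\]
Applying this to $f=\partial_k u_i$ and inserting the two bounds just obtained yields $\|\partial_k u_i\|_{L^2}\leq C(\|u\|_{L^2}+\|S(u)\|_{L^2})$. Summing over the nine index pairs $(i,k)$ and adding $\|u\|_{L^2}$ gives $\|u\|_{H^1}\leq C(\|u\|_{L^2}+\|S(u)\|_{L^2})$, which is the claim.

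The main obstacle is the Ne\v{c}as inequality itself; everything else is elementary bookkeeping. This negative-norm estimate is the genuinely nontrivial ingredient, and it is exactly the step that exploits the Lipschitz structure of $\partial\Omega$ (it fails on arbitrary open sets). If one prefers to avoid quoting it, the estimate can be recovered by a compactness-contradiction argument: were it false, there would exist a sequence $u_n$ with $\|u_n\|_{H^1}=1$ and $\|u_n\|_{L^2}+\|S(u_n)\|_{L^2}\to0$; Rellich's theorem forces $u_n\to0$ strongly in $L^2$, while the identity above forces the second derivatives of $u_n$ to tend to $0$ in $H^{-1}$, and upgrading this to strong $H^1$ convergence—so as to contradict $\|u_n\|_{H^1}=1$—again rests on the same negative-norm compactness property of Lipschitz domains. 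Either route isolates the boundary regularity as the sole real difficulty.
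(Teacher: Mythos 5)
Your proposal is correct. The paper itself gives no proof of this lemma --- it is quoted directly from the reference \cite{FP} --- so there is nothing internal to compare against. Your argument is the classical proof of the second Korn inequality: the algebraic identity $\partial_j\partial_k u_i=\partial_k S_{ij}(u)+\partial_j S_{ik}(u)-\partial_i S_{jk}(u)$ (which checks out), combined with the Ne\v{c}as/Lions negative-norm lemma $\|f\|_{L^2}\leq C(\|f\|_{H^{-1}}+\|\nabla f\|_{H^{-1}})$ on bounded Lipschitz domains; this is essentially the proof found in the standard references, including the one the paper cites. You correctly isolate the Ne\v{c}as lemma as the only nontrivial ingredient and the only place where the Lipschitz regularity of $\partial\Omega$ enters, and the density/limiting step and the summation over index pairs are routine. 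The alternative compactness-contradiction sketch at the end is also sound, with the caveat you already note: it does not avoid the negative-norm machinery, since upgrading $H^{-1}$ convergence of the second derivatives to strong $H^1$ convergence uses the same property of Lipschitz domains.
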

Third, we also need the following anistropic Sobolev embedding and trace estimates.
\begin{Lemma}[\!\!\cite{MR,WXY}]\label{L2.2} Let $m_1\geq0$ and $m_2\geq0$ be integers, $u\in H^{m_1}_{co}(\Omega)\cap H^{m_2}_{co}(\Omega)$ and $\nabla u\in H^{m_2}_{co}(\Omega)$.
Then we have
\begin{align*}
&\|u\|^2_{{L^\infty(\Omega)}}\leq C\,(\|\nabla u\|_{m_2}+\|u\|_{m_2})\|u\|_{m_1},\quad m_1+m_2\geq 3,\\
&|u|^2_{H^s(\partial\Omega)}\leq C\,(\|\nabla u\|_{m_2}+\|u\|_{m_2})\|u\|_{m_1},\quad m_1+m_2\geq 2s\geq 0.
\end{align*}
\end{Lemma}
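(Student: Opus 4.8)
The plan is to reduce both estimates to model computations on the half-space, combining a one–dimensional Sobolev (Agmon) inequality in the normal direction with a tangential Sobolev/interpolation inequality of one lower dimension. First I would fix a partition of unity $(\chi_i)_{i=0}^n$ subordinate to the covering \eqref{c} and treat each piece separately. On the interior patch $\Omega_0$, where $\overline{\Omega_0}\subset\Omega$, the conormal norm $\|\cdot\|_m$ is equivalent to the full Sobolev norm $\|\cdot\|_{H^m}$, and $\nabla u\in H^{m_2}_{co}$ means $u\in H^{m_2+1}$ there; hence the standard three–dimensional Gagliardo--Nirenberg--Agmon inequality $\|u\|_{L^\infty}^2\le C\|u\|_{H^{m_2+1}}\|u\|_{H^{m_1}}$, valid because $(m_2+1)+m_1\ge 4>3=\dim\Omega$, already yields the interior contribution with the desired product structure since $\|u\|_{H^{m_2+1}}\le C(\|\nabla u\|_{m_2}+\|u\|_{m_2})$ on $\Omega_0$.

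The core of the argument is the boundary patches $\Omega_k$, which I would flatten using the coordinates $\Psi$ from \eqref{3.a}, so that the problem becomes a statement on the half–space $\{z>0\}$ with tangential fields $\partial_{y^1},\partial_{y^2}$ and the degenerate normal field $Z_3=\varphi(z)\partial_z$. The crucial point is that $\|u\|_m$ only controls the degenerate derivatives $(\varphi\partial_z)^j u$, which vanish as $z\to 0$; the genuine normal derivative $\partial_z u$ is furnished precisely by the hypothesis $\nabla u\in H^{m_2}_{co}$, and this is why the factor $\|\nabla u\|_{m_2}+\|u\|_{m_2}$ appears on the right–hand side. For the $L^\infty$ bound I would apply, to each tangentially differentiated quantity $\partial_y^\beta u$, the one–dimensional inequality $\|f\|_{L^\infty_z}^2\le C\|f\|_{L^2_z}(\|f\|_{L^2_z}+\|\partial_z f\|_{L^2_z})$ pointwise in $y$, and then control the resulting $L^\infty_y$ norms of the $z$–integrated quantities by a two–dimensional Sobolev embedding in $y$. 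Distributing $m_1$ tangential derivatives onto the $L^2$–factor and $m_2$ (together with the one genuine normal derivative from $\nabla u$) onto the other factor, and using that $m_1+m_2\ge3$ meets the critical exponent in the combined $1+2$ dimensional count, produces exactly $\|u\|_{L^\infty}^2\le C(\|\nabla u\|_{m_2}+\|u\|_{m_2})\|u\|_{m_1}$.

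For the trace estimate I would proceed analogously, replacing the $L^\infty_z$ step by the one–dimensional trace inequality $|f(0)|^2\le C\|f\|_{L^2_z}(\|f\|_{L^2_z}+\|\partial_z f\|_{L^2_z})$, obtained from the fundamental theorem of calculus applied to $\partial_z(\chi|f|^2)$ with a cutoff $\chi$ equal to one at $z=0$. Applying this to $\partial_y^\alpha u$ for tangential multi–indices, integrating in $y$, and summing gives the boundary $H^s$ norm, since on $\partial\Omega$ the $H^s$ norm is the tangential $H^s_y$ norm. The symmetric case $m_1=m_2=s$ follows immediately with the $2s$ derivatives split evenly, and the general condition $m_1+m_2\ge 2s$ then follows by interpolating the tangential derivative count between the two factors.

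The main obstacle I anticipate is not the model inequalities themselves but the bookkeeping that yields the \emph{precise} product right–hand side with the asymmetric pair $(m_1,m_2)$: one must split the available derivatives between a low–regularity $L^2$–type factor and a high–regularity factor so that the total matches the critical threshold ($m_1+m_2\ge3$ for the embedding, $m_1+m_2\ge 2s$ for the trace), and this is cleanest to carry out via tangential interpolation (Gagliardo--Nirenberg in $y$) rather than a direct derivative count. A secondary technical point is to confirm that the commutators between the conormal fields $Z_i$ and the genuine normal derivative $\partial_z$, together with the coordinate change $\Psi$ and the cutoffs, generate only lower–order terms already dominated by the right–hand side, so that passing between $\partial_z$ and the degenerate $Z_3$ costs nothing at the level of these estimates.
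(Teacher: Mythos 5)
The paper states this lemma without proof, quoting it from \cite{MR,WXY}, and your outline reproduces the standard argument given there: localization and boundary flattening, the one--dimensional Agmon and trace inequalities in the normal variable (with the genuine normal derivative supplied by $\nabla u\in H^{m_2}_{co}$, since $Z_3=\varphi(z)\partial_z$ degenerates at $z=0$), and tangential Gagliardo--Nirenberg interpolation to distribute $m_1$ and $m_2$ derivatives between the two factors. Your proposal is correct and takes essentially the same route as the cited proof.
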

Fourth, we introduce the following Gagliardo-Nirenberg-Moser inequality which will be used frequently.
\begin{Lemma}[\!\!\cite{GO}]\label{L2.3}Let $u, v\in L^\infty(\Omega)\cap H^k_{co}(\Omega)$, we have
\begin{equation}\nonumber
\|Z^{\alpha_1}uZ^{\alpha_2}v\|\leq C\, (\|u\|_{L^\infty(\Omega)}\|v\|_k+\|v\|_{L^\infty(\Omega)}\|u\|_k),\quad |\alpha_1|+|\alpha_2|=k.
\end{equation}
\end{Lemma}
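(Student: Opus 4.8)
The plan is to derive this Moser-type product estimate from three classical ingredients adapted to the conormal setting: H\"older's inequality, the Gagliardo--Nirenberg interpolation inequality for the tangent vector fields $Z_k$, and Young's inequality. Writing $|\alpha_1|=j$ and $|\alpha_2|=k-j$ with $0\le j\le k$, the endpoint cases $j=0$ and $j=k$ are immediate, since one factor is then $u$ or $v$ itself, bounded in $L^\infty(\Omega)$, and the other is controlled by $\|\cdot\|_k$; so I assume $1\le j\le k-1$. First I would apply H\"older's inequality with the conjugate exponents $\tfrac{2k}{j}$ and $\tfrac{2k}{k-j}$, which gives
\[
\|Z^{\alpha_1}u\,Z^{\alpha_2}v\|\le\|Z^{\alpha_1}u\|_{L^{2k/j}(\Omega)}\,\|Z^{\alpha_2}v\|_{L^{2k/(k-j)}(\Omega)}.
\]

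The key step is the conormal Gagliardo--Nirenberg interpolation inequality
\[
\|Z^{\alpha_1}u\|_{L^{2k/j}(\Omega)}\le C\,\|u\|_{L^\infty(\Omega)}^{1-j/k}\,\|u\|_k^{j/k},\qquad
\|Z^{\alpha_2}v\|_{L^{2k/(k-j)}(\Omega)}\le C\,\|v\|_{L^\infty(\Omega)}^{j/k}\,\|v\|_k^{1-j/k},
\]
where the exponents are dictated by the scaling relation $\tfrac1p-\tfrac{j}{n}=\tfrac{j}{k}\big(\tfrac12-\tfrac{k}{n}\big)$ with $p=\tfrac{2k}{j}$. To establish these I would use a partition of unity subordinate to the covering \eqref{c} and flatten the boundary through the map $\Psi$ in \eqref{3.a}, so that in each chart the generators become the tangential derivatives $\partial_{y^1},\partial_{y^2}$ together with the weighted normal field $Z_3=\varphi(z)\partial_z$. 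On the resulting half-space model the standard Gagliardo--Nirenberg inequality applies directly; the degenerate normal field is accommodated by noting that commutators of the generators are again finite combinations of the $Z_k$ with smooth coefficients, so that every intermediate quantity $\|Z^i u\|$ for $0\le i\le k$ is dominated by $\|u\|_k$, and the interpolation between $L^\infty(\Omega)$ and $H^k_{co}(\Omega)$ closes uniformly over the charts.

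Combining the two displayed interpolation bounds and setting $A:=\|u\|_{L^\infty(\Omega)}\|v\|_k$ and $B:=\|v\|_{L^\infty(\Omega)}\|u\|_k$, the right-hand side takes the form $C\,A^{1-j/k}B^{j/k}$. Young's inequality with the conjugate exponents $\tfrac{k}{k-j}$ and $\tfrac{k}{j}$ then yields $A^{1-j/k}B^{j/k}\le\tfrac{k-j}{k}A+\tfrac{j}{k}B\le A+B$, which is exactly the claimed bound. The main obstacle is the second paragraph: establishing the Gagliardo--Nirenberg interpolation inequality uniformly in the conormal framework, since the $Z_k$ are variable-coefficient fields that do not commute and the normal generator $Z_3=\varphi(z)\partial_z$ degenerates at the boundary. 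Once that interpolation inequality is secured, the H\"older and Young steps are entirely routine.
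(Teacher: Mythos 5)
The paper does not prove this lemma at all; it is quoted as a known result from the cited reference \cite{GO}, so there is no in-paper argument to compare against. Your route --- H\"older with exponents $\tfrac{2k}{j}$ and $\tfrac{2k}{k-j}$, the conormal Gagliardo--Nirenberg interpolation $\|Z^{j}u\|_{L^{2k/j}}\le C\|u\|_{L^\infty}^{1-j/k}\|u\|_{k}^{j/k}$, then Young --- is the standard proof of this Moser-type estimate and is essentially the argument of the cited source. The one place your write-up is too quick is the assertion that on the flattened half-space model ``the standard Gagliardo--Nirenberg inequality applies directly'': it does not apply as a black box to the degenerate field $Z_3=\varphi(z)\partial_z$, and one must instead re-run Nirenberg's integration-by-parts induction for the vector fields themselves, using that $\varphi(0)=0$ annihilates the boundary contributions and that integrating by parts against $Z_3$ costs only a bounded multiplier $\varphi'$ plus commutator terms that are again finite combinations of the $Z_k$ --- which is precisely the issue you correctly identify at the end, so the gap is one of exposition rather than substance.
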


Finally, the following decomposition on $H^s$ contributes to the proof of the convergence rate in $H^1$.
\begin{Lemma}[\!\!\cite{YZ1}]\label{L2.4}For $H^s(\Omega) \,(s\geq0)$, we have
\begin{align}
H^s(\Omega)=\nabla\times(FH\cap H^{s+1}(\Omega))\oplus(HG\cap H^s(\Omega))\oplus(GG\cap H^s(\Omega)),\nonumber
\end{align}
where
\begin{align*}
&FH=\big{\{}u\, {|}\,u=\nabla\times \varphi,\,\, \varphi\in H^1(\Omega),\,\, \nabla\cdot \varphi=0,\,\, n\times \varphi=0\,\,\, \text{on} \,\,\,\partial\Omega\big{\}},\\
&HG=\Big{\{}u\, {|}\,u=\nabla\varphi,\,\,\,\Delta\varphi=0,\,\,\varphi=c_i\,\,\, \text{on} \,\,\,\Gamma_i,\,\,\,\bigcup_i\Gamma_i=\partial\Omega\Big{\}},\\
&GG=\big{\{}u\, {|}\,u=\nabla\varphi,\,\,\varphi\in H^1_0(\Omega)\big{\}}.
\end{align*}
\end{Lemma}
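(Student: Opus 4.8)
The plan is to recognize Lemma \ref{L2.4} as a Friedrichs--Hodge-type $L^2$-orthogonal decomposition carried along the regularity scale $H^s$, and to prove it in two stages: first that the three summands $\nabla\times(FH\cap H^{s+1})$, $HG\cap H^s$ and $GG\cap H^s$ are mutually $L^2$-orthogonal, and then that their sum exhausts $H^s(\Omega)$ with the stated regularity. The orthogonality is what makes ``$\oplus$'' meaningful and forces uniqueness of the decomposition, so I would treat it first.

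For orthogonality I would argue entirely by integration by parts. Take $\nabla\varphi\in GG$ (so $\varphi\in H^1_0$), $\nabla\phi\in HG$ (so $\Delta\phi=0$ and $\phi=c_i$ on each boundary component $\Gamma_i$), and $\nabla\times w$ with $w\in FH\cap H^{s+1}$. Pairing $GG$ against $HG$ gives $\int_\Omega\nabla\varphi\cdot\nabla\phi=-\int_\Omega\varphi\,\Delta\phi+\int_{\partial\Omega}\varphi\,\partial_n\phi=0$ since $\Delta\phi=0$ and $\varphi|_{\partial\Omega}=0$. Pairing a gradient against $\nabla\times w$ reduces, via $\dv(f\,\nabla\times w)=\nabla f\cdot(\nabla\times w)$, to the boundary integral $\int_{\partial\Omega}f\,(\nabla\times w)\cdot n$; for $f=\varphi\in H^1_0$ this vanishes because $\varphi|_{\partial\Omega}=0$, and for $f=\phi$ it equals $\sum_i c_i\int_{\Gamma_i}(\nabla\times w)\cdot n$, which vanishes because the flux of a curl through each \emph{closed} boundary component $\Gamma_i$ is zero. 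This last cancellation is exactly why $HG$ is defined through functions that are \emph{constant} on each connected piece of $\partial\Omega$: the topological constraint is precisely what is needed for orthogonality.

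For completeness I would peel off the three pieces successively. Given $u\in H^s(\Omega)$, I first solve the Dirichlet problem $\Delta\varphi=\dv u$ in $\Omega$, $\varphi|_{\partial\Omega}=0$; on the smooth domain elliptic regularity yields $\varphi\in H^{s+1}$, hence $\nabla\varphi\in GG\cap H^s$, and one checks directly that $u_1:=u-\nabla\varphi$ is weakly divergence free and is the $GG$-projection of $u$. Since the harmonic-field space $HG$ is finite dimensional (its dimension is governed by the topology of $\Omega$, essentially the number of connected components of $\partial\Omega$), I then subtract the $L^2$-orthogonal projection $h\in HG$ of $u_1$, leaving $u_2:=u_1-h$ divergence free and orthogonal to $HG$. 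It remains to realize $u_2$ as $\nabla\times w$ for some $w\in FH\cap H^{s+1}$, which is a vector-potential construction: one builds a potential for the solenoidal, cohomology-free field $u_2$ and fixes the gauge so that the potential itself lies in $FH$, i.e. is a curl $\nabla\times\varphi$ with $\dv\varphi=0$ and $n\times\varphi=0$ on $\partial\Omega$, while tracking $H^{s+1}$ regularity through the associated elliptic systems.

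The hard part is this third step. The existence of a vector potential with the prescribed gauge and boundary behaviour is exactly where the topology of $\Omega$ and the orthogonality to $HG$ enter, and it rests on the div--curl elliptic estimate of Lemma \ref{L2.5} (or its vector-potential analogue) together with the solvability theory for the operators $\dv$ and $\nabla\times$ on a smooth bounded domain. Once the three components are constructed with the claimed regularity and shown to be mutually orthogonal, the direct-sum decomposition and its uniqueness follow at once, and the $H^s$-orthogonality shows the associated projections are bounded uniformly across the whole scale $s\ge 0$.
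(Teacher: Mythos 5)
The paper itself offers no proof of Lemma \ref{L2.4}: it is quoted verbatim from \cite{YZ1}, so there is no internal argument to compare against and your proposal has to be judged on its own terms. The orthogonality half is complete and correct: the three integrations by parts are the right ones, and you have correctly identified why $HG$ is built from functions that are locally constant on $\partial\Omega$ (so that $\sum_i c_i\int_{\Gamma_i}(\nabla\times w)\cdot n$ vanishes term by term, the flux of a curl through a closed surface being zero). The first two reduction steps of the completeness half are also sound: solving $\Delta\varphi=\dv u$ with $\varphi\in H^1_0(\Omega)$ produces the $GG$-component with the claimed regularity, and projecting off the finite-dimensional space $HG$, whose elements are smooth up to the boundary, costs nothing.

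The gap is in your third step, which you describe but do not prove, and which is essentially the entire content of the lemma beyond routine elliptic theory. What must be shown is: (i) the remainder $u_2$, being divergence free and orthogonal to $HG$, satisfies $\int_{\Gamma_i}u_2\cdot n=0$ for \emph{every} boundary component (this does follow from $u_2\perp HG$ together with $\dv u_2=0$, but it should be stated explicitly, since it is exactly the solvability condition for a vector potential); (ii) one can construct $W$ with $\nabla\times W=u_2$, $\dv W=0$, $W\cdot n=0$ on $\partial\Omega$ and $W$ orthogonal to the harmonic space $HH$, with $\|W\|_{H^{s+1}}\le C\|u_2\|_{H^s}$ coming from the div--curl estimate of Lemma \ref{L2.5}; and (iii) such a $W$ actually belongs to $FH$, i.e.\ is itself of the form $\nabla\times\varphi$ with $\dv\varphi=0$ and $n\times\varphi=0$ on $\partial\Omega$. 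Point (iii) rests on the further splitting $\mathbb{H}=HH\oplus FH$ of the Leray space (which this paper also imports from \cite{YZ1}, in the proof of Lemma \ref{L4.3}), and it is not a formality: a divergence-free tangent field is the curl of an $FH$-potential only after its $HH$-component has been removed, which is why the gauge has to be fixed by an orthogonality condition and not merely by boundary conditions. As written, your step three restates the desired conclusion (``fix the gauge so that the potential lies in $FH$'') rather than establishing it, so the proposal is an accurate roadmap of the argument in \cite{YZ1} but not yet a self-contained proof.
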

\section{A priori estimates and proof of Theorem \ref{Th1}}\label{Sec3}

The main aim of this section is to prove the following a priori estimates which is the crucial step in the proof of Theorem \ref{Th1}.

\begin{Theorem}\label{Th4}
For $m>6$ and a $C^m$ domain $\Omega$, there exists a constant $C>0$, independent of $\epsilon\in(0,1]$ and $|\zeta|\leq1$, such that for any sufficiently smooth solution defined on $[0,T]$ of the problem \eqref{1.1}-\eqref{1.4} in $\Omega$, we have
\begin{equation}\label{TH4}
N_m(t)\leq C\,\Big{(} N_m(0)+(1+t+\epsilon^3t^2)\int_0^t(N_m^2(s)+N_m(s))\,ds\Big{)},\quad\forall\,\, t\in[0,T],
\end{equation}
where
\begin{align}\label{N_m}
\!\!\! N_m(t):=\|v^\epsilon\|_m^2+\|\nabla v^\epsilon\|_{m-1}^2+\|\nabla v^\epsilon\|^2_{1,\infty}+\|H^\epsilon\|_m^2+\|\nabla H^\epsilon\|_{m-1}^2+\|\nabla H^\epsilon\|^2_{1,\infty}.
\end{align}
\end{Theorem}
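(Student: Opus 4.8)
The plan is to control separately each of the six pieces that make up $N_m(t)$ and then to sum the resulting differential inequalities. It is convenient to organize the whole argument around the Els\"asser-type symmetrization $Z^\pm = v^\epsilon \pm H^\epsilon$, under which \eqref{1.1}--\eqref{1.2} become the pair of convection--diffusion equations $\partial_t Z^\pm - \epsilon\Delta Z^\pm + Z^\mp\cdot\nabla Z^\pm + \nabla q^\epsilon = 0$, each transported by the opposite variable and sharing the single effective pressure $q^\epsilon = p^\epsilon - \tfrac12(|v^\epsilon|^2 - |H^\epsilon|^2)$. This exposes the coupling in its mildest form and is the device that will make the hardest, sup-norm, estimate tractable.

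First I would establish the conormal energy estimate. Applying $Z^I$ with $|I|\le m$ to \eqref{1.1}, \eqref{1.2}, pairing in $L^2$ with $Z^I v^\epsilon$, $Z^I H^\epsilon$ and summing, the viscous terms yield the good dissipation $\epsilon\|\nabla Z^I(v^\epsilon,H^\epsilon)\|^2$ after an integration by parts whose boundary contribution is tamed by the conormal form \eqref{3.2}--\eqref{3.2.1} of the Navier conditions, which trade the dangerous $\partial_n$ on $\partial\Omega$ for zeroth-order curvature and slip terms. The transport and coupling terms produce commutators $[Z^I, v^\epsilon\cdot\nabla]$ and $[Z^I, H^\epsilon\cdot\nabla]$, which I would bound by Lemma \ref{L2.3} in the shape $\|\nabla(v^\epsilon,H^\epsilon)\|_{L^\infty}\|(v^\epsilon,H^\epsilon)\|_m^2$, i.e. by $N_m^{3/2}\le\tfrac12(N_m^2+N_m)$; this is exactly where the quadratic right-hand side of \eqref{TH4} originates. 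The pressure pairing $\int Z^I\nabla q^\epsilon\cdot Z^I v^\epsilon$ does not vanish, since $Z^I$ does not preserve \eqref{1.3}, so I would use the splitting $q^\epsilon = P_1^\epsilon + P_2^\epsilon$: here $P_1^\epsilon$ carries the Euler pressure, solving $\Delta P_1^\epsilon = -\nabla v^\epsilon:(\nabla v^\epsilon)^t + \nabla H^\epsilon:(\nabla H^\epsilon)^t$ with Neumann data that, because $v^\epsilon\cdot n = H^\epsilon\cdot n = 0$, involves no derivatives of $(v^\epsilon,H^\epsilon)$, while $P_2^\epsilon$ absorbs the viscous boundary flux $\epsilon\,n\cdot\Delta v^\epsilon$ and hence comes with a gain of one power of $\epsilon$.

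Next I would close the normal-derivative norm $\|\nabla(v^\epsilon,H^\epsilon)\|_{m-1}$. The normal components $\partial_n v^\epsilon\cdot n$ and $\partial_n H^\epsilon\cdot n$ are controlled directly by $\|(v^\epsilon,H^\epsilon)\|_m$ through \eqref{1.3} written in the $\Psi$-coordinates. For the tangential part I would use $\eta_v^\epsilon = (S v^\epsilon\,n + \zeta v^\epsilon)_\tau$ and $\eta_H^\epsilon = (S H^\epsilon\,n + \zeta H^\epsilon)_\tau$, which vanish on $\partial\Omega$ by \eqref{1.4.1}--\eqref{1.4.2}; differentiating the equations shows that $(\eta_v^\epsilon,\eta_H^\epsilon)$ solves a coupled convection--diffusion system with homogeneous Dirichlet data, so conormal energy estimates on it carry no boundary term, and Lemma \ref{L2.5} together with Korn's inequality (Lemma \ref{L2.1}) then upgrade $\|(\eta_v^\epsilon,\eta_H^\epsilon)\|_{m-1}$ to the full $\|\nabla(v^\epsilon,H^\epsilon)\|_{m-1}$. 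The pressure step is purely elliptic: applying $H^{m-1}_{co}$ Neumann regularity to the two problems for $P_1^\epsilon,P_2^\epsilon$ yields $\|\nabla P_1^\epsilon\|_{m-1}\le C\,N_m$ and $\|\nabla P_2^\epsilon\|_{m-1}\le C\epsilon(\cdots)$, the latter $\epsilon$-gain being what lets the viscous pressure be absorbed by the dissipation and what ultimately produces the weight $\epsilon^3 t^2$ after a Cauchy--Schwarz in $t$ against $\epsilon\int_0^t\|\nabla^2(v^\epsilon,H^\epsilon)\|_{m-1}^2$.

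The last and hardest step is the Lipschitz bound $\|\nabla(v^\epsilon,H^\epsilon)\|_{1,\infty}$, and this is where the coupling genuinely obstructs a direct imitation of the Navier--Stokes argument. I would work with equivalent quantities $\overline{\eta}_v^\epsilon,\overline{\eta}_H^\epsilon$ but would first diagonalize by setting $\eta_1 = \overline{\eta}_v^\epsilon + \overline{\eta}_H^\epsilon$ and $\eta_2 = \overline{\eta}_v^\epsilon - \overline{\eta}_H^\epsilon$; in these Els\"asser combinations each $\eta_j$ satisfies a \emph{scalar} convection--diffusion equation transported by $Z^\mp$ with homogeneous Dirichlet data, for which a maximum-principle / pointwise parabolic estimate controlling the degeneracy of the tangential fields near $\partial\Omega$ gives the uniform-in-$\epsilon$ $L^\infty$ bound, which the anisotropic embedding Lemma \ref{L2.2} then converts into the $W^{1,\infty}$ control. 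I expect this sup-norm estimate for the coupled, boundary-degenerate parabolic system to be the principal obstacle; the rest is careful but essentially routine bookkeeping of commutators. Summing the four estimates, bounding every nonlinear contribution by $\tfrac12(N_m^2+N_m)$ via Lemma \ref{L2.3} and the $L^\infty$ bounds just obtained, and collecting the time-weights generated by the $\epsilon$-gains yields precisely the integral inequality \eqref{TH4}.
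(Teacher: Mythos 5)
Your proposal is correct and follows essentially the same route as the paper: conormal energy estimates with the $P_1^\epsilon+P_2^\epsilon$ pressure splitting, normal-derivative control via the boundary-vanishing quantities $\eta_v^\epsilon,\eta_H^\epsilon$, elliptic Neumann estimates for the pressure, and the Lipschitz bound obtained by decoupling $\overline{\eta}_v^\epsilon\pm\overline{\eta}_H^\epsilon$ into two scalar convection--diffusion equations with homogeneous Dirichlet data and invoking the pointwise parabolic estimate of Masmoudi--Rousset (Lemma \ref{L3.6}); your Els\"asser framing is precisely the paper's $\eta_1,\eta_2$ device. The only cosmetic discrepancy is that the weight $\epsilon^3t^2$ actually arises in the $L^\infty$ step, from Cauchy--Schwarzing $\epsilon\int_0^t\|\nabla(v^\epsilon,H^\epsilon)\|_{3,\infty}$ against the dissipation $\epsilon\int_0^t\|\nabla^2(v^\epsilon,H^\epsilon)\|_{m-1}^2$, rather than from the $P_2^\epsilon$ estimate itself.
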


Since the proof of Theorem \ref{Th4} is quite complicated and lengthy, we divided the proof into the following subsections.
\subsection{Conormal Energy Estimates} In this subsection, we first give the basic $L^2$ energy estimates.
\begin{Lemma}\label{L3.1} For a smooth solution of the problem \eqref{1.1}-\eqref{1.4}, we have
\begin{align}\label{3.3.1}
\frac{1 }{2}\frac{d }{dt}(\| v^\epsilon(t)\| ^{2}+\| H^\epsilon(t)\| ^{2})&+2\epsilon(\| Sv^\epsilon\|^2+\| SH^\epsilon\|^2)\nonumber\\
&+2\epsilon\zeta\int_{\partial\Omega}\big{(}\!\mid v^\epsilon_\tau\mid^{2}+\mid H^\epsilon_\tau\mid^{2}\! \big{)}=0
\end{align}
for every $\epsilon\in(0,1]$ and $|\zeta|\leq 1$.
\end{Lemma}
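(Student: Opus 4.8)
The plan is to derive the basic $L^2$ energy identity by taking the standard inner product of the momentum equation \eqref{1.1} with $v^\epsilon$ and the induction equation \eqref{1.2} with $H^\epsilon$, integrating over $\Omega$, and adding the two results. First I would test \eqref{1.1} against $v^\epsilon$: the term $\partial_t v^\epsilon$ gives $\frac{1}{2}\frac{d}{dt}\|v^\epsilon\|^2$; the viscous term $-\epsilon\Delta v^\epsilon$ is to be integrated by parts, and here it is essential to rewrite $\Delta v^\epsilon$ in terms of the strain tensor (using $\dv v^\epsilon=0$) so that, after integration by parts, it produces $2\epsilon\|Sv^\epsilon\|^2$ together with a boundary integral involving $(Sv^\epsilon\cdot n)_\tau\cdot v^\epsilon_\tau$. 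The convective and gradient terms $v^\epsilon\cdot\nabla v^\epsilon$, $H^\epsilon\cdot\nabla H^\epsilon$, $\frac{1}{2}\nabla(|v^\epsilon|^2-|H^\epsilon|^2)$ and $\nabla p^\epsilon$ must be handled using $\dv v^\epsilon=0$ and the no-penetration condition $v^\epsilon\cdot n=0$; the pressure-type and quadratic-gradient terms integrate to boundary contributions that vanish because $v^\epsilon\cdot n=0$, while $\int_\Omega (v^\epsilon\cdot\nabla v^\epsilon)\cdot v^\epsilon=0$ by the divergence-free and boundary conditions.

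Next I would test \eqref{1.2} against $H^\epsilon$: the time derivative gives $\frac{1}{2}\frac{d}{dt}\|H^\epsilon\|^2$, the diffusion term $-\epsilon\Delta H^\epsilon$ yields $2\epsilon\|SH^\epsilon\|^2$ plus the boundary integral involving $(SH^\epsilon\cdot n)_\tau\cdot H^\epsilon_\tau$ after the analogous strain-tensor rewriting, and $\int_\Omega(v^\epsilon\cdot\nabla H^\epsilon)\cdot H^\epsilon=0$ again by incompressibility and $v^\epsilon\cdot n=0$. The crucial observation is the cancellation of the coupling terms: adding the two equations, the contribution $-\int_\Omega(H^\epsilon\cdot\nabla H^\epsilon)\cdot v^\epsilon$ from \eqref{1.1} and $-\int_\Omega(H^\epsilon\cdot\nabla v^\epsilon)\cdot H^\epsilon$ from \eqref{1.2} combine. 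Using $\dv H^\epsilon=0$ and $H^\epsilon\cdot n=0$, one integrates by parts so that these two cross terms cancel exactly, which is precisely the algebraic structure that makes the MHD energy estimate close.

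Finally I would insert the Navier boundary conditions \eqref{1.4.1}--\eqref{1.4.2} into the two remaining boundary integrals. Since $(Sv^\epsilon\cdot n)_\tau=-\zeta v^\epsilon_\tau$ and $(SH^\epsilon\cdot n)_\tau=-\zeta H^\epsilon_\tau$ on $\partial\Omega$, the boundary terms become $2\epsilon\zeta\int_{\partial\Omega}|v^\epsilon_\tau|^2$ and $2\epsilon\zeta\int_{\partial\Omega}|H^\epsilon_\tau|^2$ respectively (noting that on $\partial\Omega$ the inner product with the full field reduces to the tangential part because the normal component vanishes). Collecting all the surviving terms produces exactly the claimed identity \eqref{3.3.1}.

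The main obstacle, and the step deserving the most care, is the treatment of the viscous/diffusion terms: one must use the vector identity relating $\Delta u$ to $\dv(Su)$ for divergence-free fields and perform the integration by parts so as to extract both the coercive interior term $2\epsilon\|Su\|^2$ and the correct tangential boundary term that matches the Navier condition; a sign error or an incomplete use of $\dv u=0$ here would spoil the identity. The coupling-term cancellation is the second point requiring attention, since it relies simultaneously on $\dv H^\epsilon=0$ and $H^\epsilon\cdot n=0$; everything else reduces to routine integration by parts justified by the divergence-free constraints and the no-penetration conditions.
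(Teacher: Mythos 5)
Your proposal is correct and follows essentially the same route as the paper: the standard $L^2$ energy identity obtained by testing \eqref{1.1} with $v^\epsilon$ and \eqref{1.2} with $H^\epsilon$, rewriting $\Delta u=2\,\dv(Su)$ for divergence-free fields to extract $2\epsilon\|Su\|^2$ plus the tangential boundary term matched by the Navier condition, and cancelling the cross terms $(H^\epsilon\cdot\nabla H^\epsilon,v^\epsilon)+(H^\epsilon\cdot\nabla v^\epsilon,H^\epsilon)$ via $\dv H^\epsilon=0$ and $H^\epsilon\cdot n=0$. All the points you flag as delicate are exactly the ones the paper handles, and in the same way.
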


\begin{proof}
Multiplying \eqref{1.1} and \eqref{1.2} by $v^\epsilon$ and $H^\epsilon$ respectively, using the boundary condition, and integrating by parts, we obtain
\begin{align}
\frac{1}{2}\frac{d}{dt}(\|v^\epsilon\|^2+\|H^\epsilon\|^2)&-\epsilon(\Delta v^\epsilon,v^\epsilon)-\epsilon(\Delta H^\epsilon,H^\epsilon)\nonumber\\
&-(H^\epsilon\cdot\nabla H^\epsilon,v^\epsilon)-(H^\epsilon\cdot\nabla v^\epsilon,H^\epsilon)=0,\label{2.2}
\end{align}
where $(\cdot,\cdot)$ stands for the $L^2$ scalar product. By integrating by parts and using the boundary conditions,  we get
\begin{equation}\nonumber
(H^\epsilon\cdot\nabla H^\epsilon,v^\epsilon)+(H^\epsilon\cdot\nabla v^\epsilon,H^\epsilon)=0.
\end{equation}
Now, let us treat the terms with the viscous coefficient $\epsilon$ in \eqref{2.2}. Thanks to integrations by parts and the boundary condition \eqref{1.4.1}, we have
\begin{align}\label{2.4}
(\epsilon\Delta v^\epsilon,v^\epsilon)=2\epsilon(\nabla\cdot Sv^\epsilon,v^\epsilon)&=-2\epsilon\|Sv^\epsilon\|^2+2\epsilon\ \int_{\partial \Omega}((Sv^\epsilon)\cdot n)\cdot v^\epsilon \nonumber\\
&=-2\epsilon\| Sv^\epsilon\|^2-2\epsilon\zeta\int_{\partial\Omega}|v^\epsilon_\tau|^2.
\end{align}

Similarly, we have
\begin{equation}\label{2.5}
(\epsilon\Delta H^\epsilon,H^\epsilon)=-2\epsilon\| SH^\epsilon\|^2-2\epsilon\zeta\int_{\partial\Omega}|H^\epsilon_\tau|^2.
\end{equation}
Putting \eqref{2.4} and \eqref{2.5} into \eqref{2.2}, we then obtain \eqref{3.3.1}.
\end{proof}

Now, we turn to the higher order energy estimates.
\begin{Lemma}\label{L3.2} For every $m\geq 0$, a smooth solution of the problem \eqref{1.1}-\eqref{1.4} satisfies the estimate
\begin{align}\label{3.4}
\frac{d }{dt}&(\|v^\epsilon(t)\|_{m}^{2}+\|H^\epsilon(t)\|_{m}^{2})+ \epsilon (\|\nabla v^\epsilon\|_{m}^{2}+\|\nabla H^\epsilon\|_{m}^{2})\nonumber\\
\leq& C\,(1+\| v^\epsilon\|_{W^{1,\infty}}+\| H^\epsilon\|_{W^{1,\infty}})( \| v^\epsilon\|_{m}^{2}+ \|\nabla v^\epsilon\|_{m-1}^{2}+\| H^\epsilon\|_{m}^{2}+ \| \nabla H^\epsilon\|_{m-1}^{2})\nonumber\\
&+C\,\| \nabla^2 P^\epsilon_1\|_{m-1}\| v^\epsilon\|_m+C\epsilon^{-1}\| \nabla P^\epsilon_2\|_{m-1}^2,
\end{align}
where the pressure $P^\epsilon:=p^\epsilon-\frac{1}{2}(|v^\epsilon|^2-|H^\epsilon|^2):=P^\epsilon_1+P^\epsilon_2$. Here, $P^\epsilon_1$ is the``Euler" part of the pressure which solves
\begin{equation}\label{3.8}
\left\{\begin{array}{l}
\Delta P^\epsilon_1=-\nabla\cdot(v^\epsilon\cdot\nabla v^\epsilon-H^\epsilon\cdot\nabla H^\epsilon)\quad \text{in}\quad \Omega,\\
\partial_nP^\epsilon_1=-(v^\epsilon\cdot\nabla v^\epsilon-H^\epsilon\cdot\nabla H^\epsilon)\cdot n\quad \text{on}\quad \partial\Omega
\end{array}
\right.
\end{equation}
and $P^\epsilon_2$ is the ``Navier-Stokes" part of the pressure which solves
\begin{equation}\label{3.9}
\left\{\begin{array}{l}
\Delta P^\epsilon_2=0\quad \text{in}\quad \Omega,\\
\partial_nP^\epsilon_2=\epsilon\Delta v^\epsilon\cdot n\quad \text{on}\quad \partial\Omega.
\end{array}
\right.
\end{equation}
\end{Lemma}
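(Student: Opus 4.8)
The plan is to run a conormal energy estimate. For each multi-index $I$ with $|I|\le m$ I would apply $Z^I$ to the momentum equation written as $\partial_tv^\epsilon-\epsilon\Delta v^\epsilon+v^\epsilon\cdot\nabla v^\epsilon-H^\epsilon\cdot\nabla H^\epsilon+\nabla P^\epsilon=0$ and to \eqref{1.2}, take the $L^2$ inner product with $Z^Iv^\epsilon$ and $Z^IH^\epsilon$ respectively, and sum over $|I|\le m$. The time-derivative terms immediately produce $\tfrac12\frac{d}{dt}(\|v^\epsilon\|_m^2+\|H^\epsilon\|_m^2)$. The remaining terms fall into four groups -- viscous, convective, magnetic-coupling, and pressure -- and the whole task is to show that, after integration by parts, each group is either a favorable (dissipative) term kept on the left or is bounded by the right-hand side of \eqref{3.4}.

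The viscous term is the heart of the estimate and the main obstacle. Using $\nabla\cdot v^\epsilon=0$ I would write $\Delta v^\epsilon=2\nabla\cdot(Sv^\epsilon)$ and integrate by parts:
\[
-2\epsilon(Z^I\nabla\cdot Sv^\epsilon,Z^Iv^\epsilon)=2\epsilon(Z^I(Sv^\epsilon),\nabla Z^Iv^\epsilon)-2\epsilon\int_{\partial\Omega}(Z^I(Sv^\epsilon)\,n)\cdot Z^Iv^\epsilon+R,
\]
where $R$ collects the commutator $[Z^I,\nabla\cdot]Sv^\epsilon$. Writing $Z^I(Sv^\epsilon)=S(Z^Iv^\epsilon)+[Z^I,S]v^\epsilon$, the interior term yields $2\epsilon\|S(Z^Iv^\epsilon)\|^2$ plus a commutator; by the symmetry of $S$ and Korn's inequality (Lemma \ref{L2.1}) this controls the dissipation $\epsilon\|\nabla Z^Iv^\epsilon\|^2$ up to $\epsilon\|Z^Iv^\epsilon\|^2$. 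Since the $Z_k$ are tangent to $\partial\Omega$, every commutator lowers the conormal order by one and is bounded, after Young's inequality, by $\tfrac{\epsilon}{8}\|\nabla Z^Iv^\epsilon\|^2+C\epsilon(\|v^\epsilon\|_m^2+\|\nabla v^\epsilon\|_{m-1}^2)$, the first piece being absorbed into the dissipation. The delicate point is the boundary integral: I would split $Z^Iv^\epsilon$ into normal and tangential parts, use $v^\epsilon\cdot n=0$ (so the normal part is a lower-order commutator) and the Navier condition \eqref{3.2}, which expresses $\Pi(Sv^\epsilon\,n)$ through the zeroth-order quantity $\theta(v^\epsilon)-2\zeta\Pi v^\epsilon$, and then close with the trace estimate (Lemma \ref{L2.2}) and Young's inequality. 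The magnetic field is handled identically via \eqref{3.2.1}.

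For the convective and coupling terms I would commute $Z^I$ inward and isolate the top-order pieces. The top-order transport terms vanish, since $(v^\epsilon\cdot\nabla Z^Iv^\epsilon,Z^Iv^\epsilon)=0$ and $(v^\epsilon\cdot\nabla Z^IH^\epsilon,Z^IH^\epsilon)=0$ because $\nabla\cdot v^\epsilon=0$ and $v^\epsilon\cdot n=0$. The top-order magnetic cross terms cancel exactly as in Lemma \ref{L3.1}: the leading parts of $-(Z^I(H^\epsilon\cdot\nabla H^\epsilon),Z^Iv^\epsilon)$ and $-(Z^I(H^\epsilon\cdot\nabla v^\epsilon),Z^IH^\epsilon)$ combine into $-\int_\Omega H^\epsilon\cdot\nabla(Z^IH^\epsilon\cdot Z^Iv^\epsilon)$, which vanishes after integration by parts. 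What remains are commutator products of the type $Z^{\alpha_1}(\cdot)\,Z^{\alpha_2}\nabla(\cdot)$ with $|\alpha_1|+|\alpha_2|=m$ and $|\alpha_1|\ge1$; the Gagliardo-Nirenberg-Moser inequality (Lemma \ref{L2.3}) bounds these by $C(1+\|v^\epsilon\|_{W^{1,\infty}}+\|H^\epsilon\|_{W^{1,\infty}})(\|v^\epsilon\|_m^2+\|\nabla v^\epsilon\|_{m-1}^2+\|H^\epsilon\|_m^2+\|\nabla H^\epsilon\|_{m-1}^2)$, which is precisely the first line on the right of \eqref{3.4}.

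Finally, for the pressure I would use $P^\epsilon=P^\epsilon_1+P^\epsilon_2$ and treat each $(Z^I\nabla P^\epsilon_j,Z^Iv^\epsilon)$ by integrating by parts, using $\nabla\cdot v^\epsilon=0$ and $v^\epsilon\cdot n=0$ to reduce the interior and boundary contributions to commutators. For the Euler part this leaves the bound $C\|\nabla^2P^\epsilon_1\|_{m-1}\|v^\epsilon\|_m$. For the Navier--Stokes part I would exploit that $P^\epsilon_2$ is built so that $(\epsilon\Delta v^\epsilon-\nabla P^\epsilon_2)\cdot n=0$ on $\partial\Omega$ (cf.\ \eqref{3.9}), which removes the dangerous boundary term; the remaining $P^\epsilon_2$ contribution pairs against a dissipation-type factor and, by Young's inequality, is absorbed into $\epsilon\|\nabla v^\epsilon\|_m^2$ at the cost of the term $C\epsilon^{-1}\|\nabla P^\epsilon_2\|_{m-1}^2$. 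Collecting all contributions (and rescaling by the harmless factor from the $\tfrac12$) gives \eqref{3.4}. I expect the viscous boundary analysis -- simultaneously extracting the correct dissipation, exploiting the Navier conditions, and absorbing the $\epsilon$-weighted commutators -- to be the principal difficulty, with the $P^\epsilon_1/P^\epsilon_2$ splitting serving as the device that keeps uncontrollable high-order pressure terms from ever appearing.
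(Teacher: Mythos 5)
Your proposal is correct and follows essentially the same route as the paper: the same $P^\epsilon_1+P^\epsilon_2$ splitting, the same reduction of the viscous term to $2\nabla\cdot Sv^\epsilon$ with Korn's inequality for the dissipation, the same treatment of the boundary integral via the Navier condition together with $v^\epsilon\cdot n=0$ and the divergence-free identity for $\partial_n v^\epsilon\cdot n$, the Gagliardo--Nirenberg--Moser bound for the transport commutators, and the Young-inequality absorption producing $\epsilon^{-1}\|\nabla P^\epsilon_2\|_{m-1}^2$. The only cosmetic difference is that the paper phrases the argument as an induction on $m$ and, for the $P^\epsilon_2$ boundary term, does not rely on the Neumann condition in \eqref{3.9} but estimates $\int_{\partial\Omega}Z^\alpha P^\epsilon_2\,Z^\alpha v^\epsilon\cdot n$ directly through \eqref{3.20} and the trace inequality, which is the mechanism you also correctly identify.
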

\begin{proof}
The estimate for $m=0$ has been given in Lemma \ref{L3.1}. Now we assume Lemma \ref{L3.2} have been proved for $|\alpha|\leq m-1$ and prove that it holds for $|\alpha|=m$. We apply $Z^\alpha$ to \eqref{1.1}-\eqref{1.2} for $|\alpha|=m$ to obtain
\begin{align*}
&\partial_tZ^\alpha v^\epsilon+v^\epsilon\cdot\nabla Z^\alpha v^\epsilon-H^\epsilon\cdot\nabla Z^\alpha H^\epsilon+ Z^\alpha\nabla P^\epsilon=\epsilon Z^\alpha\Delta v^\epsilon+\mathcal{C}_1,\\
&\partial_tZ^\alpha H^\epsilon+v^\epsilon\cdot\nabla Z^\alpha H^\epsilon-H^\epsilon\cdot\nabla Z^\alpha v^\epsilon=\epsilon Z^\alpha \Delta H^\epsilon+\mathcal{C}_2,
\end{align*}
where
\begin{align*}
&\mathcal{C}_1:=-[Z^\alpha,v^\epsilon\cdot\nabla]v^\epsilon+[Z^\alpha,H^\epsilon\cdot\nabla]H^\epsilon,\\
&\mathcal{C}_2:=-[Z^\alpha,v^\epsilon\cdot\nabla]H^\epsilon+[Z^\alpha,H^\epsilon\cdot\nabla]v^\epsilon.
\end{align*}
Consequently, we get from the standard energy estimate that
\begin{align}\label{3.11}
 \frac{1}{2}\frac{d}{dt}(\|Z^\alpha v^\epsilon\|^2+\| Z^\alpha H^\epsilon\|^2)
=&\epsilon(Z^\alpha\Delta v^\epsilon,Z^\alpha v^\epsilon)+\epsilon(Z^\alpha\Delta H^\epsilon,Z^\alpha H^\epsilon)\nonumber\\
&+(\mathcal{C}_1,Z^\alpha v^\epsilon)+(\mathcal{C}_2,Z^\alpha H^\epsilon)-(Z^\alpha \nabla P^\epsilon,Z^\alpha v^\epsilon).
\end{align}

First, by Lemma \ref{L2.3}, we obtain
\begin{align}\label{3.12}
|(\mathcal{C}_1,Z^\alpha v^\epsilon)+(\mathcal{C}_2,Z^\alpha H^\epsilon)|\leq \,&C\,(\| v^\epsilon\|_{W^{1,\infty}}+\| H^\epsilon\|_{W^{1,\infty}})\nonumber\\
&(\| v^\epsilon\|_{m}^{2}+ \|\nabla v^\epsilon\|_{m-1}^{2}+\| H^\epsilon\|_{m}^{2}+ \| \nabla H^\epsilon\|_{m-1}^{2}).
\end{align}

Next, we estimate the terms with the viscosity coefficient $\epsilon$. We have
\begin{align}\label{3.13}
\epsilon\int_\Omega Z^\alpha \Delta v^\epsilon\cdot Z^\alpha v^\epsilon=2\epsilon\int_\Omega(\nabla\cdot Z^\alpha Sv^\epsilon)\cdot Z^\alpha v^\epsilon
+2\epsilon\int_\Omega([Z^\alpha,\nabla\cdot]Sv^\epsilon)\cdot Z^\alpha v^\epsilon.
\end{align}
Now, by integrating by parts, we get from the first term on the right hand side of \eqref{3.13} that
\begin{align}\label{3.14}
\epsilon\int_\Omega(\nabla\cdot Z^\alpha Sv^\epsilon)\cdot Z^\alpha v^\epsilon=&-\epsilon\int_\Omega Z^\alpha Sv^\epsilon\cdot\nabla Z^\alpha v^\epsilon
+\epsilon\int_{\partial\Omega}((Z^\alpha Sv^\epsilon)\cdot n)\cdot Z^\alpha v^\epsilon\nonumber\\
=&-\epsilon\|S(Z^\alpha v^\epsilon)\|^2-\epsilon\int_\Omega[Z^\alpha,S]v^\epsilon\cdot \nabla Z^\alpha v^\epsilon\nonumber\\
&+\epsilon\int_{\partial\Omega}((Z^\alpha Sv^\epsilon)\cdot n)\cdot Z^\alpha v^\epsilon.
\end{align}
Thanks to Lemma \ref{L2.1}, there exists a $c_0>0$ such that
\begin{align}\label{3.15}
\epsilon\int_\Omega(\nabla\cdot Z^\alpha Sv^\epsilon)\cdot Z^\alpha v^\epsilon
\leq&-c_0\epsilon\|\nabla(Z^\alpha v^\epsilon)\|^2+C\|v^\epsilon\|^2_m+C\epsilon\|\nabla Z^\alpha v^\epsilon\|\|\nabla v^\epsilon\|_{m-1}\nonumber\\
&+\epsilon\int_{\partial\Omega}((Z^\alpha Sv^\epsilon)\cdot n)\cdot Z^\alpha v^\epsilon.
\end{align}

It remains to estimate the boundary term of \eqref{3.15}. Before we treat the boundary term, we have the following observations. Due to the Navier boundary condition \eqref{3.2}, we get
\begin{align}\label{3.16}
|\Pi\partial_nv^\epsilon|_{H^m(\partial\Omega)}\leq|\theta(v^\epsilon)|_{H^m(\partial\Omega)}+2\zeta|\Pi v^\epsilon|_{H^m(\partial\Omega)}\leq C\,|v^\epsilon|_{H^m(\partial\Omega)}.
\end{align}
To estimate the normal part of $\partial_nv^\epsilon$, we can use the divergence free condition to write
\begin{equation}\label{3.17}
\nabla\cdot v^\epsilon=\partial_nv^\epsilon\cdot n+(\Pi\partial_{y_1}v^\epsilon)^1+(\Pi\partial_{y_2}v^\epsilon)^2.
\end{equation}
Hence, we easily get
\begin{equation}\label{3.18}
|\partial_nv^\epsilon\cdot n|_{H^{m-1}(\partial\Omega)}\leq C\,|v^\epsilon|_{H^{m}(\partial\Omega)}.
\end{equation}
From \eqref{3.16} and \eqref{3.18}, we have
\begin{equation}\label{3.19}
|\nabla v^\epsilon|_{H^{m-1}(\partial\Omega)}\leq C\,|v^\epsilon|_{H^{m}(\partial\Omega)}.
\end{equation}
Thanks to $v^\epsilon\cdot n=0$ on the boundary, we immediately obtain that
\begin{equation}\label{3.20}
|(Z^\alpha v^\epsilon)\cdot n|_{H^1(\partial\Omega)}\leq C\,|v^\epsilon|_{H^{m}(\partial\Omega)},\quad|\alpha|=m.
\end{equation}

Now we return to deal with the boundary term of \eqref{3.15} as follows
\begin{align*}
\int_{\partial\Omega}((Z^\alpha Sv^\epsilon)\cdot n)\cdot Z^\alpha v^\epsilon=&\int_{\partial\Omega}Z^\alpha(\Pi(Sv^\epsilon\cdot n))\cdot\Pi Z^\alpha v^\epsilon\nonumber\\
&+\int_{\partial\Omega}Z^\alpha(\partial_nv^\epsilon\cdot n)Z^\alpha v^\epsilon\cdot n+\mathcal{C}_b^v,
\end{align*}
where
\begin{align*}
\mathcal{C}_b^v=\int_{\partial\Omega}[\Pi,Z^\alpha](Sv^\epsilon\cdot n)\cdot\Pi Z^\alpha v^\epsilon+\int_{\partial\Omega}[n,Z^\alpha](Sv^\epsilon\cdot n)Z^\alpha v^\epsilon\cdot n.
\end{align*}
Due to \eqref{3.19} and \eqref{1.4.1}, we can easily obtain that
\begin{align}
&|C_b^v|\leq C\,|\nabla v^\epsilon|_{H^{m-1}(\partial\Omega)}|v^\epsilon|_{H^{m}(\partial\Omega)}\leq C\,|v^\epsilon|_{H^{m}(\partial\Omega)}^2,\label{3.22}\\
&\Big{|}\int_{\partial\Omega}Z^\alpha(\Pi(Sv^\epsilon\cdot n))\cdot\Pi Z^\alpha v^\epsilon\Big{|}\leq C\,|v^\epsilon|_{H^{m}(\partial\Omega)}^2.\label{3.23}
\end{align}
By integrating by parts along the boundary, we have that
\begin{equation}\label{3.24}
\Big{|}\int_{\partial\Omega}Z^\alpha(\partial_nv^\epsilon\cdot n)Z^\alpha v^\epsilon\cdot n\Big{|}\leq C\,|\partial_nv^\epsilon\cdot n|_{H^{m-1}(\partial\Omega)}|Z^\alpha v^\epsilon\cdot n|_{H^1(\partial\Omega)}\leq C\,|v^\epsilon|^2_{H^m(\partial\Omega)}.
\end{equation}
Hence, we get from \eqref{3.14}, \eqref{3.15}, and \eqref{3.22}-\eqref{3.24} that
\begin{align}\label{3.25}
\epsilon\int_\Omega(\nabla\cdot Z^\alpha Sv^\epsilon)\cdot Z^\alpha v^\epsilon
\leq\, &C\,(\|v^\epsilon\|^2_m+\epsilon\|\nabla Z^m v^\epsilon\|\|\nabla v^\epsilon\|_{m-1}+\epsilon|v^\epsilon|^2_{H^m(\partial\Omega)})\nonumber\\
&-c_0\epsilon\|\nabla(Z^\alpha v^\epsilon)\|^2.
\end{align}

Next, we deal with the second term of the right hand side of \eqref{3.13}, i.e.\linebreak
$\epsilon\int_\Omega([Z^\alpha,\nabla\cdot]Sv^\epsilon)\cdot Z^\alpha v^\epsilon$. We can expand it as a sum of terms under the form
\begin{equation}
\epsilon\int_\Omega\beta_k\partial_k(Z^{\tilde{\alpha}}Sv^\epsilon)\cdot Z^\alpha v^\epsilon,\quad |\tilde{\alpha}|\leq m-1.\nonumber
\end{equation}
By using integrations by parts and \eqref{3.19}, we have
\begin{equation}\label{3.27}
\epsilon\Big{|}\int_\Omega\beta_k\partial_k(Z^{\tilde{\alpha}}Sv^\epsilon)\cdot Z^\alpha v^\epsilon\Big{|}\leq\, C\,\epsilon(\|\nabla Z^{m-1}v^\epsilon\|\|\nabla Z^mv^\epsilon\|
+\|v^\epsilon\|^2_m+|v^\epsilon|_{H^{m}(\partial\Omega)}^2).
\end{equation}
Consequently, from \eqref{3.25} and \eqref{3.27}, we get
\begin{align}\label{3.28}
\epsilon\Big{|}\int_\Omega Z^\alpha \Delta v^\epsilon\cdot Z^\alpha v^\epsilon\Big{|}\leq&\, C\,\big{\{}\|v^\epsilon\|^2_m+\epsilon\|\nabla Z^m v^\epsilon\|\|\nabla v^\epsilon\|_{m-1}+\epsilon|v^\epsilon|^2_{H^m(\partial\Omega)}\nonumber\\
&+\epsilon\|\nabla Z^m v^\epsilon\|\|\nabla Z^{m-1}v^\epsilon\|_{m-1}\big{\}}
-c_0\epsilon\|\nabla(Z^\alpha v^\epsilon)\|^2.
\end{align}

Similarly, for the term $\epsilon(Z^\alpha \Delta H^\epsilon\cdot Z^\alpha H^\epsilon)$ in the right hand side of \eqref{3.11}, we have
\begin{align}\label{3.29}
\epsilon\Big{|}\int_\Omega Z^\alpha \Delta H^\epsilon\cdot Z^\alpha H^\epsilon\Big{|}\leq &\,C\,\big{\{}\|H^\epsilon\|^2_m+\epsilon\|\nabla Z^m H^\epsilon\|\|\nabla H^\epsilon\|_{m-1}+\epsilon|H^\epsilon|^2_{H^m(\partial\Omega)}\nonumber\\
&+\epsilon\|\nabla Z^m H^\epsilon\|\|\nabla Z^{m-1}H^\epsilon\|_{m-1}\big{\}}
-c_0\epsilon\|\nabla(Z^\alpha H^\epsilon)\|^2.
\end{align}

Finally, we estimate the term involving the pressure $P^\epsilon$ in \eqref{3.11}. We have
\begin{align}\label{3.30}
\Big{|}\int_{\Omega}Z^\alpha \nabla P^\epsilon\cdot Z^\alpha v^\epsilon\Big{|}\leq&\, \|\nabla^2P_1^\epsilon\|_{m-1}\|v^\epsilon\|_m+\Big{|}\int_{\Omega}Z^\alpha \nabla P^\epsilon_2\cdot Z^\alpha v^\epsilon\Big{|}\nonumber\\
\leq\,&\|\nabla^2P_1^\epsilon\|_{m-1}\|v^\epsilon\|_m+C\|\nabla P_2^\epsilon\|_{m-1}\|v^\epsilon\|_m\nonumber\\
&+\Big{|}\int_{\Omega}\nabla Z^\alpha  P^\epsilon_2\cdot Z^\alpha v^\epsilon\Big{|}.
\end{align}
Now, we focus on the last term of \eqref{3.30}. By integrating by parts, we obtain
\begin{equation}
\Big{|}\int_{\Omega}\nabla Z^\alpha  P^\epsilon_2\cdot Z^\alpha v^\epsilon\Big{|}\leq C\,\|\nabla P^\epsilon_2\|_{m-1}\|\nabla Z^\alpha v^\epsilon\|+\Big{|}\int_{\partial\Omega}Z^\alpha  P^\epsilon_2Z^\alpha v^\epsilon\cdot n\Big{|}.\nonumber
\end{equation}
To estimate the boundary term, we note that when $m=1$, \eqref{3.4} can be obtained easily. Here, we assume that $m\geq2$. By integrating by parts along the boundary, we get
\begin{equation}\nonumber
\Big{|}\int_{\partial\Omega}Z^\alpha  P^\epsilon_2Z^\alpha v^\epsilon\cdot n\Big{|}\leq C\,|Z^{\widetilde{\alpha}}P^\epsilon_2|_{L^2(\partial\Omega)}|Z^\alpha v^\epsilon\cdot n|_{H^1(\partial\Omega)},
\end{equation}
where $|\widetilde{\alpha}|=m-1$. By using \eqref{3.20} and Lemma \ref{L2.2}, we have
\begin{align}\label{3.33}
\Big{|}\int_{\Omega}Z^\alpha \nabla P^\epsilon\cdot Z^\alpha v^\epsilon\Big{|}\leq&\|\nabla^2P_1^\epsilon\|_{m-1}\|v^\epsilon\|_m+C\,\|\nabla P_2^\epsilon\|_{m-1}\|v^\epsilon\|_m\nonumber\\
&+C\,\|\nabla P^\epsilon_2\|_{m-1}\|\nabla Z^\alpha v^\epsilon\|+\epsilon^{-1}\|\nabla P^\epsilon_2\|_{m-1}^2\nonumber\\
&+\epsilon(\|\nabla v^\epsilon\|_m\|v^\epsilon\|_{m}+\|v^\epsilon\|^2_{m}).
\end{align}

Consequently, from \eqref{3.12}, \eqref{3.28}-\eqref{3.30} and \eqref{3.33}, we have
\begin{align*}
\frac{1}{2}\frac{d}{dt}&(\|Z^\alpha v^\epsilon\|^2+\| Z^\alpha H^\epsilon\|^2)+c_0\epsilon\|\nabla(Z^\alpha v^\epsilon)\|^2
+c_0\epsilon\|\nabla(Z^\alpha v^\epsilon)\|^2\nonumber\\
\leq&\, C\,(1+\| v^\epsilon\|_{W^{1,\infty}}+\| H^\epsilon\|_{W^{1,\infty}})(\| v^\epsilon\|_{m}^{2}+ \|\nabla v^\epsilon\|_{m-1}^{2}+\| H^\epsilon\|_{m}^{2}+ \| \nabla H^\epsilon\|_{m-1}^{2})\nonumber\\
&+C\,\big{\{}\epsilon\|\nabla Z^m v^\epsilon\|\|\nabla v^\epsilon\|_{m-1}+\epsilon|v^\epsilon|^2_{H^m(\partial\Omega)}
+\epsilon\|\nabla Z^m v^\epsilon\|\|\nabla Z^{m-1}v^\epsilon\|_{m-1}\nonumber\\
&+\epsilon\|\nabla Z^m H^\epsilon\|\|\nabla H^\epsilon\|_{m-1}+\epsilon|H^\epsilon|^2_{H^m(\partial\Omega)}
+\epsilon\|\nabla Z^m H^\epsilon\|\|\nabla Z^{m-1}H^\epsilon\|_{m-1}\nonumber\\
&+\|\nabla^2P_1^\epsilon\|_{m-1}\|v^\epsilon\|_m+\|\nabla P_2^\epsilon\|_{m-1}\|v^\epsilon\|_m
+\|\nabla P^\epsilon_2\|_{m-1}\|\nabla Z^mv^\epsilon\|\nonumber\\
&+\epsilon^{-1}\|\nabla P^\epsilon_2\|_{m-1}^2+\epsilon(\|\nabla v^\epsilon\|_m\|v^\epsilon\|_{m}+\|v^\epsilon\|^2_{m})\big{\}}.
\end{align*}
Next, by using Lemma \ref{L2.2}, Young's inequality, the assumptions with respect to $|\alpha|\leq m-1$, we have
\begin{align*}
\frac{1}{2}\frac{d}{dt}&(\|v^\epsilon\|^2_m+\|H^\epsilon\|^2_m)+c_0\epsilon\|\nabla v^\epsilon\|^2_{m-1}
+c_0\epsilon\|\nabla v^\epsilon\|^2_{m-1}\nonumber\\
\leq&\, C(1+\| v^\epsilon\|_{W^{1,\infty}}+\| H^\epsilon\|_{W^{1,\infty}})(\| v^\epsilon\|_{m}^{2}+ \|\nabla v^\epsilon\|_{m-1}^{2}+\| H^\epsilon\|_{m}^{2}+ \| \nabla H^\epsilon\|_{m-1}^{2})\nonumber\\
&+ C(\|\nabla^2P_1^\epsilon\|_{m-1}\|v^\epsilon\|_m+\epsilon^{-1}\|\nabla P_2^\epsilon\|_{m-1}^2).
\end{align*}
This ends the proof of Lemma \ref{L3.2}.
\end{proof}

\subsection{Normal Derivative Estimates.}
In this subsection, we provide the estimates for $\|\nabla v^\epsilon\|_{m-1}$ and $\|\nabla H^\epsilon\|_{m-1}$. Noticing that
$$\|\chi\partial_{y^i}v^\epsilon\|_{m-1}\leq C\,\|v^\epsilon\|_{m},\quad\|\chi\partial_{y^i}H^\epsilon\|_{m-1}\leq C\,\|H^\epsilon\|_{m},\quad i=1,2,$$
it suffices to estimate $\|\chi\partial_nv^\epsilon\|_{m-1}$ and $\|\chi\partial_nH^\epsilon\|_{m-1}$, where $\chi$ is compactly supported in one of the $\Omega_i$ and with value one in a vicinity of the boundary. We shall thus use the local coordinates \eqref{3.a}.

Due to \eqref{3.17}, we immediately obtain that
\begin{align}\label{3.37}
\|\chi\partial_nv^\epsilon\cdot n\|_{m-1}\leq C\,\|v^\epsilon\|_m,\quad \|\chi\partial_nH^\epsilon\cdot n\|_{m-1}\leq C\,\|H^\epsilon\|_m.
\end{align}
Thus, it remains to estimate $\|\chi\Pi\partial_nv^\epsilon\|_{m-1}$ and $\|\chi\Pi\partial_nH^\epsilon\|_{m-1}$. We define
\begin{align*}
&\eta^\epsilon_v:=\chi\Pi((\nabla v^\epsilon+(\nabla v^\epsilon)^t)n)+2\zeta\chi\Pi v^\epsilon,\\
&\eta^\epsilon_H:=\chi\Pi((\nabla H^\epsilon+(\nabla H^\epsilon)^t)n)+2\zeta\chi\Pi H^\epsilon.
\end{align*}
In view of the Navier boundary conditions \eqref{1.4.1} and \eqref{1.4.2}, we have
\begin{align*}
\eta^\epsilon_v=0,\quad\eta^\epsilon_H=0\quad \text{on}\quad\partial\Omega.
\end{align*}
Moreover, since $\eta^\epsilon_v$ and $\eta^\epsilon_H$ have another forms in the vicinity of the boundary $\partial\Omega$:
\begin{align}
&\eta^\epsilon_v=\chi\Pi\partial_nv^\epsilon+\chi\Pi(\nabla(v^\epsilon\cdot n)-\nabla n\cdot v^\epsilon-v^\epsilon\times(\nabla\times n)+2\zeta v^\epsilon)\label{3.41},\\
&\eta^\epsilon_H=\chi\Pi\partial_nH^\epsilon+\chi\Pi(\nabla(H^\epsilon\cdot n)-\nabla n\cdot H^\epsilon-H^\epsilon\times(\nabla\times n)+2\zeta H^\epsilon)\label{3.42},
\end{align}
we easily get that
\begin{align*}
\|\chi\Pi\partial_nv^\epsilon\|_{m-1}\leq&\, C\,(\|\eta^\epsilon_v\|_{m-1}+\|v^\epsilon\|_{m}+\|\partial_nv^\epsilon\cdot n\|_{m-1})\nonumber\\
\leq&\, C\,(\|\eta^\epsilon_v\|_{m-1}+\|v^\epsilon\|_{m}),\\
\|\chi\Pi\partial_nH^\epsilon\|_{m-1}\leq&\, C\,(\,\|\eta^\epsilon_H\|_{m-1}+\|H^\epsilon\|_{m}+\|\partial_nH^\epsilon\cdot n\|_{m-1})\nonumber\\
\leq&\, C\,(\|\eta^\epsilon_H\|_{m-1}+\|H^\epsilon\|_{m}).
\end{align*}
Hence, it remains to estimate $\|\eta^\epsilon_v\|_{m-1}$ and $\|\eta^\epsilon_H\|_{m-1}$.\par

We have the following conormal estimates for $\eta^\epsilon_v$ and $\eta^\epsilon_H$.
\begin{Lemma}\label{L3.3} For every $m\geq1$, we have
\begin{align}\label{3.3}
\frac{1}{2}\frac{d}{dt}&(\|\eta^\epsilon_v\|_{m-1}^2+\|\eta^\epsilon_H\|_{m-1}^2)+\epsilon(\|\nabla\eta^\epsilon_v\|^2_{m-1}+\|\nabla \eta^\epsilon_H\|^2_{m-1})\nonumber\\
\leq &\,C\,(1+\|v^\epsilon\|_{2,\infty}+\|\nabla v^\epsilon\|_{1,\infty}+\|H^\epsilon\|_{2,\infty}+\|\nabla H^\epsilon\|_{1,\infty})\nonumber\\
&\times(\|\eta^\epsilon_v\|_{m-1}^2+\|\eta^\epsilon_H\|_{m-1}^2+\|v^\epsilon\|_m^2+\|H^\epsilon\|_m^2+\|\nabla v^\epsilon\|_{m-1}^2+\|\nabla H^\epsilon\|_{m-1}^2)\nonumber\\
&+C\,\big{(}(\|\eta^\epsilon_v\|_{m-1}+\|v^\epsilon\|_m)(\|\nabla^2P^\epsilon_1\|_{m-1}+\|\nabla P^\epsilon\|_{m-1})+\epsilon^{-1}\|\nabla P^\epsilon_2\|^2_{m-1}\big{)}.
\end{align}
\end{Lemma}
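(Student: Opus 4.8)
The plan is to derive the coupled evolution system satisfied by $(\eta^\epsilon_v,\eta^\epsilon_H)$ and then run a conormal energy estimate in the spirit of Lemma \ref{L3.2}, the decisive gain being that $\eta^\epsilon_v$ and $\eta^\epsilon_H$ obey \emph{homogeneous Dirichlet} boundary conditions, so that all boundary integrals produced by integration by parts vanish. First I would apply to the velocity equation \eqref{1.1} and the magnetic equation \eqref{1.2} the first-order operator that defines $\eta^\epsilon_v$ and $\eta^\epsilon_H$, namely $u\mapsto\chi\Pi((\nabla u+(\nabla u)^t)n)+2\zeta\chi\Pi u$. Since this operator differentiates once in space, the transport operator $v^\epsilon\cdot\nabla$ and the coupling operator $H^\epsilon\cdot\nabla$ are reproduced up to commutators, yielding a system of the schematic form
\begin{align*}
&\partial_t\eta^\epsilon_v+v^\epsilon\cdot\nabla\eta^\epsilon_v-H^\epsilon\cdot\nabla\eta^\epsilon_H-\epsilon\Delta\eta^\epsilon_v=F_v,\\
&\partial_t\eta^\epsilon_H+v^\epsilon\cdot\nabla\eta^\epsilon_H-H^\epsilon\cdot\nabla\eta^\epsilon_v-\epsilon\Delta\eta^\epsilon_H=F_H,
\end{align*}
where $F_v$ and $F_H$ collect (i) the commutators of the defining operator with $v^\epsilon\cdot\nabla$, $H^\epsilon\cdot\nabla$ and $\epsilon\Delta$, (ii) the geometric terms generated by $\chi$, $\Pi$, $n$ and the shape operator $\theta$, and (iii) the pressure contribution, which from $-\nabla P^\epsilon$ in \eqref{1.1} takes the form $-2\chi\Pi((\nabla^2P^\epsilon)n)-2\zeta\chi\Pi\nabla P^\epsilon$ and thus carries a Hessian of the pressure.

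Next, I would apply $Z^\alpha$ for $|\alpha|\le m-1$ to this system, test the first equation against $Z^\alpha\eta^\epsilon_v$ and the second against $Z^\alpha\eta^\epsilon_H$, and add. The transport terms $(v^\epsilon\cdot\nabla Z^\alpha\eta,Z^\alpha\eta)$ vanish after one integration by parts because $\dv v^\epsilon=0$ and $v^\epsilon\cdot n=0$, leaving only the commutator $[Z^\alpha,v^\epsilon\cdot\nabla]\eta$ for the source. The crucial cancellation is that of the cross terms: after integrating by parts, $-(H^\epsilon\cdot\nabla Z^\alpha\eta^\epsilon_H,Z^\alpha\eta^\epsilon_v)-(H^\epsilon\cdot\nabla Z^\alpha\eta^\epsilon_v,Z^\alpha\eta^\epsilon_H)$ reduces to $\int_{\partial\Omega}(H^\epsilon\cdot n)(Z^\alpha\eta^\epsilon_v\cdot Z^\alpha\eta^\epsilon_H)$, which vanishes since $H^\epsilon\cdot n=0$ and $\dv H^\epsilon=0$; the attendant commutators $[Z^\alpha,H^\epsilon\cdot\nabla]\eta$ go to the source. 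For the viscous terms, $\epsilon(\Delta Z^\alpha\eta,Z^\alpha\eta)=-\epsilon\|\nabla Z^\alpha\eta\|^2+\epsilon\int_{\partial\Omega}(\partial_n Z^\alpha\eta)\cdot Z^\alpha\eta$, and the boundary integral is zero because $\eta^\epsilon_v=\eta^\epsilon_H=0$ on $\partial\Omega$ and the fields $Z_i$ are tangent to $\partial\Omega$, giving the coercive dissipation on the left of \eqref{3.3}; the commutator $\epsilon[Z^\alpha,\Delta]\eta$ is absorbed into this dissipation by Young's inequality.

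The remaining work is to bound the source terms against the right-hand side of \eqref{3.3}. The commutator and geometric pieces of $F_v,F_H$ are estimated by the Gagliardo-Nirenberg-Moser inequality (Lemma \ref{L2.3}) together with the anisotropic embedding and trace bounds (Lemma \ref{L2.2}); this is where the prefactor $1+\|v^\epsilon\|_{2,\infty}+\|\nabla v^\epsilon\|_{1,\infty}+\|H^\epsilon\|_{2,\infty}+\|\nabla H^\epsilon\|_{1,\infty}$ appears, since one derivative of $(v^\epsilon,H^\epsilon)$ already sits inside $\eta$, forcing the lower-order factors into Lipschitz-type conormal norms, while the factor it multiplies reproduces $\|v^\epsilon\|_m^2+\|H^\epsilon\|_m^2+\|\nabla v^\epsilon\|_{m-1}^2+\|\nabla H^\epsilon\|_{m-1}^2+\|\eta^\epsilon_v\|_{m-1}^2+\|\eta^\epsilon_H\|_{m-1}^2$. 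The pressure contribution is the main obstacle and is handled through the splitting $P^\epsilon=P^\epsilon_1+P^\epsilon_2$. The Euler part, controlled by elliptic regularity for \eqref{3.8}, contributes $(\|\eta^\epsilon_v\|_{m-1}+\|v^\epsilon\|_m)\|\nabla^2 P^\epsilon_1\|_{m-1}$, and the first-order piece $2\zeta\chi\Pi\nabla P^\epsilon$ gives the $(\|\eta^\epsilon_v\|_{m-1}+\|v^\epsilon\|_m)\|\nabla P^\epsilon\|_{m-1}$ term. For the Navier-Stokes part one cannot afford $\|\nabla^2 P^\epsilon_2\|_{m-1}$; instead I would integrate the Hessian term by parts so that only $\nabla P^\epsilon_2$ is paired against $\nabla Z^\alpha\eta^\epsilon_v$, and then split by Young's inequality, $\|\nabla P^\epsilon_2\|_{m-1}\|\nabla Z^\alpha\eta^\epsilon_v\|\le\tfrac{c_0}{2}\epsilon\|\nabla Z^\alpha\eta^\epsilon_v\|^2+C\epsilon^{-1}\|\nabla P^\epsilon_2\|_{m-1}^2$, absorbing the first term into the dissipation and producing the last term $\epsilon^{-1}\|\nabla P^\epsilon_2\|_{m-1}^2$ of \eqref{3.3}. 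Summing over $|\alpha|\le m-1$ then yields \eqref{3.3}.
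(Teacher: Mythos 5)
Your proposal is correct and follows essentially the same route as the paper: derive the coupled system for $(\eta^\epsilon_v,\eta^\epsilon_H)$ with source terms split into commutator, cutoff, geometric and pressure pieces, exploit the homogeneous Dirichlet condition (together with the tangency of the $Z_i$) to kill all boundary integrals, and treat the pressure via the splitting $P^\epsilon=P^\epsilon_1+P^\epsilon_2$ with an integration by parts and Young's inequality producing the $\epsilon^{-1}\|\nabla P^\epsilon_2\|_{m-1}^2$ term. The only cosmetic difference is that the paper organizes the conormal estimate as an induction on $m$ starting from the $L^2$ case, whereas you sum directly over $|\alpha|\le m-1$; the substance is identical.
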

\begin{proof}
Setting $M_v=\nabla v^\epsilon $ and $M_H=\nabla H^\epsilon $, we get from \eqref{1.1}-\eqref{1.2} that
\begin{align*}
&\partial_tM_v-\epsilon\Delta M_v+v^\epsilon\cdot\nabla M_v-H^\epsilon\cdot\nabla M_H=( M_H)^2-(M_v)^2-\nabla^2P^\epsilon,\\
&\partial_tM_H-\epsilon\Delta M_H+v^\epsilon\cdot\nabla M_H-H^\epsilon\cdot\nabla M_v= M_v M_H-M_H M_v.
\end{align*}
Hence, $\eta^\epsilon_v$ and $\eta^\epsilon_H$ solve the equations
\begin{align}
&\partial_t\eta^\epsilon_v-\epsilon\Delta\eta^\epsilon_v+v^\epsilon\cdot\nabla\eta^\epsilon_v-H^\epsilon\cdot\nabla\eta^\epsilon_H=F_v^b+F_v^\chi+F_v^\kappa
-2\chi\Pi(\nabla^2P^\epsilon n),\label{3.48}\\
&\partial_t\eta^\epsilon_H-\epsilon\Delta\eta^\epsilon_H+v^\epsilon\cdot\nabla\eta^\epsilon_H-H^\epsilon\cdot\nabla\eta^\epsilon_v=F_H^b+F_H^\chi+F_H^\kappa,\label{3.49}
\end{align}
where
\begin{align*}
F_v^b=&-\chi\Pi((\nabla v^\epsilon)^2+((\nabla v^\epsilon)^t)^2-(\nabla H^\epsilon)^2-((\nabla H^\epsilon)^t)^2)n-2\zeta\chi\Pi\nabla P^\epsilon,\\
F_v^\chi=&-\epsilon\Delta\chi(\Pi2 Sv^\epsilon n+2\zeta\Pi v^\epsilon)-2\epsilon\nabla\chi\cdot\nabla(\Pi2 Sv^\epsilon n+2\zeta\Pi v^\epsilon)\\
&+(v^\epsilon\cdot\nabla\chi)\Pi(2Sv^\epsilon n+2\zeta v^\epsilon)-(H^\epsilon\cdot\nabla\chi)\Pi(2SH^\epsilon n+2\zeta H^\epsilon),\\
F_v^\kappa=&\chi(v^\epsilon\cdot\nabla\Pi)(2Sv^\epsilon n+2\zeta v^\epsilon)+\chi\Pi(2Sv^\epsilon (v^\epsilon\cdot\nabla)n)-\epsilon\chi(\Delta\Pi)(2Sv^\epsilon n+2\zeta v^\epsilon)\\
&-2\epsilon\chi\nabla\Pi\cdot\nabla(2Sv^\epsilon n+2\zeta v^\epsilon)-\epsilon\chi\Pi(2Sv^\epsilon\Delta n+2\nabla Sv^\epsilon\cdot\nabla n)\\
&-\chi(H\cdot\nabla\Pi)(2SH^\epsilon n+2\zeta H^\epsilon)-\chi\Pi(2SH^\epsilon(H\cdot\nabla)n),\\
F_H^b=&-\chi\Pi(M_H M_v+M_v^tM_H^t-M_v M_H-M_H^t M_v^t)n,\\
F_H^\chi=&-\epsilon\Delta\chi(\Pi2 SH^\epsilon n+2\zeta H^\epsilon)-2\epsilon\nabla\chi\cdot\nabla(\Pi2 SH^\epsilon n+2\zeta H^\epsilon)\\
&+(v^\epsilon\cdot\nabla\chi)\Pi(2SH^\epsilon n+2\zeta\Pi H^\epsilon)-(H^\epsilon\cdot\nabla\chi)\Pi(2Sv^\epsilon n+2\zeta\Pi v^\epsilon),\\
F_H^\kappa=&\chi(v^\epsilon\cdot\nabla\Pi)(2SH^\epsilon n+2\zeta H^\epsilon)+\chi\Pi(2SH^\epsilon (v^\epsilon\cdot\nabla)n)-\epsilon\chi(\Delta\Pi)(2SH^\epsilon n\\
&+2\zeta H^\epsilon)-2\epsilon\chi\nabla\Pi\cdot\nabla(2SH^\epsilon n+2\zeta H^\epsilon)-\epsilon\chi\Pi(2SH^\epsilon\Delta n+2\nabla SH^\epsilon\cdot\nabla n)\\
&-\chi(H\cdot\nabla\Pi)(2Sv^\epsilon n+2\zeta v^\epsilon)-\chi\Pi(2Sv^\epsilon(H\cdot\nabla)n).
\end{align*}

Let us start with the case of $m=1$. By using the standard $L^2$ energy estimate, we get
\begin{align}\label{3.52}
\frac{1}{2}&\frac{d}{dt}(\|\eta^\epsilon_v\|^2+\|\eta^\epsilon_H\|^2)+\epsilon(\|\nabla\eta^\epsilon_v\|^2+\|\nabla\eta^\epsilon_H\|^2)\nonumber\\
&=(F_v^b+F_v^\chi+F_v^\kappa,\eta^\epsilon_v)+(F_H^b+F_H^\chi+F_H^\kappa,\eta^\epsilon_H)-2(\chi\Pi(\nabla^2P^\epsilon n),\eta^\epsilon_v).
\end{align}
Now we estimate the right-hand side terms of \eqref{3.52}. We easily arrive at
\begin{align}
\|F_v^b\|_{m-1}+\|F_H^b\|_{m-1}\leq \,&C\,(\|v^\epsilon\|_{W^{1,\infty}}+\|H^\epsilon\|_{W^{1,\infty}})(\|\nabla v^\epsilon\|_{m-1}+\|\nabla H^\epsilon\|_{m-1})\nonumber\\
&+C\,\|\nabla P^\epsilon\|_{m-1},\label{3.53}\\
\|F_v^\kappa\|_{m-1}+\|F_H^\kappa\|_{m-1}\leq \,&C\,\epsilon(\|\chi\nabla^2v^\epsilon\|_{m-1}+\|\chi\nabla^2H^\epsilon\|_{m-1}+\|\nabla H^\epsilon\|_{m-1}\nonumber\\
&+\|\nabla v^\epsilon\|_{m-1}+\| v^\epsilon\|_{m}+\|H^\epsilon\|_{m})\nonumber\\
&+C\,(\|v^\epsilon\|_{W^{1,\infty}}+\|H^\epsilon\|_{W^{1,\infty}})(\| v^\epsilon\|_{m-1}+\|H^\epsilon\|_{m-1}\nonumber\\
&+\|\nabla v^\epsilon\|_{m-1}+\|\nabla H^\epsilon\|_{m-1}).\label{3.54}
\end{align}

Next, since $F_v^\chi$ and $F_H^\chi$ are supported away from the boundary, we can control any derivatives by the norm $\|\cdot\|_m$. We immediately get
\begin{align}\label{3.55}
\|F_v^\chi\|_{m-1}+\|F_H^\chi\|_{m-1}\leq&\,C\,\epsilon(\|\nabla v^\epsilon\|_m+\|\nabla H^\epsilon\|_m)\nonumber\\
&+C\,(1+\|v^\epsilon\|_{W^{1,\infty}}+\|H^\epsilon\|_{W^{1,\infty}})(\|v^\epsilon\|_m+\|H^\epsilon\|_m).
\end{align}

Finally, we estimate $(\chi\Pi(\nabla^2P^\epsilon n),\eta^\epsilon_v)$. Noting that $P^\epsilon=P^\epsilon_1+P^\epsilon_2$, we get
\begin{equation}\label{3.56}
|(\chi\Pi(\nabla^2P^\epsilon n),\eta^\epsilon_v)|\leq \|\nabla^2P^\epsilon_1\|\|\eta^\epsilon_v\|+\Big{|}\int_{\Omega}\chi\Pi(\nabla^2P^\epsilon_2 n)\cdot\eta^\epsilon_v\Big{|}.
\end{equation}
Since $\eta^\epsilon_v=0$ on the boundary, we can integrate by the parts the last term in \eqref{3.56} to obtain
\begin{equation}\label{3.57}
\Big{|}\int_{\Omega}\chi\Pi(\nabla^2P^\epsilon_2 n)\cdot\eta^\epsilon_v\Big{|}\leq C\|\nabla P^\epsilon_2\|(\|\nabla\eta^\epsilon_v\|+\|\eta^\epsilon_v\|).
\end{equation}

Consequently, from \eqref{3.53}-\eqref{3.55}, \eqref{3.56}, \eqref{3.57}, we have
\begin{align}\label{3.58}
\frac{1}{2}\frac{d}{dt}&(\|\eta^\epsilon_v\|^2+\|\eta^\epsilon_H\|^2)+\epsilon(\|\nabla\eta^\epsilon_v\|^2+\|\nabla\eta^\epsilon_H\|^2)\nonumber\\
\leq&\, C\,\epsilon(\|\chi\nabla^2v^\epsilon\|+\|\chi\nabla^2H^\epsilon\|+\|\nabla v^\epsilon\|_1+\|\nabla H^\epsilon\|_1)(\|\eta^\epsilon_v\|+\|\eta^\epsilon_H\|)\nonumber\\
&+C\,(1+\|v^\epsilon\|_{W^{1,\infty}}+\|H^\epsilon\|_{W^{1,\infty}})(\| v^\epsilon\|^2_1+\|H^\epsilon\|^2_1+\|\nabla v^\epsilon\|^2+\|\nabla H^\epsilon\|^2\nonumber\\
&+\|\eta^\epsilon_v\|^2+\|\eta^\epsilon_H\|^2)+\|\nabla P^\epsilon_2\|(\|\nabla\eta^\epsilon_v\|+\|\eta^\epsilon_v\|)+\|\nabla^2P^\epsilon_1\|\|\eta^\epsilon_v\|+\|\nabla P^\epsilon\|\|\eta^\epsilon_v\|.
\end{align}

Due to \eqref{3.37} and \eqref{3.41}, we get that
\begin{align}\label{3.59}
\epsilon\|\chi\nabla^2v^\epsilon\|_{m-1}&\leq C\,\epsilon(\|\nabla\eta^\epsilon_v\|_{m-1}+\|\nabla v^\epsilon\|_m+\| v^\epsilon\|_m).
\end{align}

Similarly, we get
\begin{equation}\label{3.60}
\epsilon\|\chi\nabla^2H^\epsilon\|_{m-1}\leq C\,\epsilon(\|\nabla\eta^\epsilon_H\|_{m-1}+\|\nabla H^\epsilon\|_m+\| H^\epsilon\|_m).
\end{equation}

By using \eqref{3.59}, \eqref{3.60} and Young's inequality, we have
\begin{align}\label{3.61}
\frac{1}{2}\frac{d}{dt}&(\|\eta^\epsilon_v\|^2+\|\eta^\epsilon_H\|^2)+\epsilon(\|\nabla\eta^\epsilon_v\|^2+\|\nabla\eta^\epsilon_H\|^2)\nonumber\\
\leq&\, C\big{\{}\epsilon(\|\nabla v^\epsilon\|_1+\|\nabla H^\epsilon\|_1)(\|\eta^\epsilon_v\|+\|\eta^\epsilon_H\|)\nonumber\\
&+(1+\|v^\epsilon\|_{W^{1,\infty}}+\|H^\epsilon\|_{W^{1,\infty}})(\| v^\epsilon\|^2_1+\|H^\epsilon\|^2_1+\|\nabla v^\epsilon\|^2+\|\nabla H^\epsilon\|^2\nonumber\\
&+\|\eta^\epsilon_v\|^2+\|\eta^\epsilon_H\|^2)+\epsilon^{-1}\|\nabla P^\epsilon_2\|^2+\|\eta^\epsilon_v\|(\|\nabla P^\epsilon\|+\|\nabla^2P^\epsilon_1\|)\big{\}}.
\end{align}
Since $\epsilon(\|\nabla v^\epsilon\|_1+\|\nabla H^\epsilon\|_1)$ has been estimated in Lemma \ref{L3.2}, this yields \eqref{3.3} for the case of $m=1$.

Now we assume that Lemma \ref{L3.3} is true for $|\alpha|\leq m-2$ and let us consider the situation of $|\alpha|= m-1$. By applying $Z^\alpha$ to \eqref{3.48}-\eqref{3.49}, we have
\begin{align}
\partial_tZ^\alpha\eta^\epsilon_v-&\epsilon Z^\alpha\Delta\eta^\epsilon_v+v^\epsilon\cdot\nabla Z^\alpha\eta^\epsilon_v-H^\epsilon\cdot\nabla Z^\alpha\eta^\epsilon_H\nonumber\\
&=Z^\alpha F_v^b+Z^\alpha F_v^\chi+Z^\alpha F_v^\kappa-Z^\alpha(\chi\Pi(\nabla^2P^\epsilon n))+\mathcal{C}_3,\label{3.62}\\
\partial_tZ^\alpha\eta^\epsilon_H-&\epsilon Z^\alpha\Delta\eta^\epsilon_H+v^\epsilon\cdot\nabla Z^\alpha\eta^\epsilon_H-H^\epsilon\cdot\nabla Z^\alpha\eta^\epsilon_v\nonumber\\
&=Z^\alpha F_H^b+Z^\alpha F_H^\chi+Z^\alpha F_H^\kappa+\mathcal{C}_4,\label{3.63}
\end{align}
where
\begin{equation}\nonumber
\begin{split}
&\mathcal{C}_3:=-[Z^\alpha,v^\epsilon\cdot\nabla]\eta^\epsilon_v+[Z^\alpha,H^\epsilon\cdot\nabla]\eta^\epsilon_H,\\
&\mathcal{C}_4:=-[Z^\alpha,v^\epsilon\cdot\nabla]\eta^\epsilon_H+[Z^\alpha,H^\epsilon\cdot\nabla]\eta^\epsilon_v.
\end{split}
\end{equation}
From the standard energy estimate, we get
\begin{align}\label{3.65}
\frac{1}{2}\frac{d}{dt}&(\|Z^\alpha\eta^\epsilon_v\|^2+\|Z^\alpha\eta^\epsilon_H\|^2)\nonumber\\
\leq&\,\epsilon \,(Z^\alpha\Delta\eta^\epsilon_v,Z^\alpha\eta^\epsilon_v)
+\epsilon (Z^\alpha\Delta\eta^\epsilon_H,Z^\alpha\eta^\epsilon_H)\nonumber\\
&+(C_1,Z^\alpha\eta^\epsilon_v)+(C_2,Z^\alpha\eta^\epsilon_H)-2(Z^\alpha(\chi\Pi(\nabla^2P^\epsilon n)),Z^\alpha\eta^\epsilon_v)\nonumber\\
&+(Z^\alpha F_v^b+Z^\alpha F_v^\chi+Z^\alpha F_v^\kappa,Z^\alpha\eta^\epsilon_v)+(Z^\alpha F_H^b+Z^\alpha F_H^\chi+Z^\alpha F_H^\kappa,Z^\alpha\eta^\epsilon_H).
\end{align}

First, let us estimate $\epsilon (Z^\alpha\Delta\eta^\epsilon_v,Z^\alpha\eta^\epsilon_v)$ and $\epsilon (Z^\alpha\Delta\eta^\epsilon_H,Z^\alpha\eta^\epsilon_H)$. We observe that
\begin{align}\label{3.66}
\int_\Omega Z^\alpha\partial_{ii}\eta^\epsilon_v\cdot Z^\alpha\eta^\epsilon_v=
& -\int_\Omega|\partial_iZ^\alpha\eta^\epsilon_v|^2-\int_\Omega[Z^\alpha,\partial_i]\eta^\epsilon_v\cdot\partial_iZ^\alpha\eta^\epsilon_v\nonumber\\
&+\int_\Omega[Z^\alpha,\partial_i]
\partial_i\eta^\epsilon_v\cdot Z^\alpha\eta^\epsilon_v,
\end{align}
where $i=1, 2, 3$. To estimate the last two terms on the right hand side of \eqref{3.66}, we use the structure of the commutator $[Z^\alpha,\partial_i]$ and the expansion
$\partial_i=\beta^1\partial_{y^1}+\beta^2\partial_{y^2}+\beta^3\partial_{y^3}$ in the local basis. We have the following expansion
\begin{align}
[Z^\alpha,\partial_i]\eta^\epsilon_v=\sum_{\gamma,|\gamma|\leq|\alpha|-1}c_\gamma\partial_zZ^\gamma\eta^\epsilon_v+\sum_{\beta,|\beta|\leq|\alpha|}c_\beta Z^\beta\eta^\epsilon_v.\nonumber
\end{align}
This yields the estimates
\begin{align}
&\Big{|}\int_\Omega[Z^\alpha,\partial_i]\eta^\epsilon_v\cdot\partial_iZ^\alpha\eta^\epsilon_v\Big{|}\leq C\,\|\nabla Z^{m-1}\eta^\epsilon_v\|(\|\nabla\eta^\epsilon_v\|_{m-2}+\|\eta^\epsilon_v\|_{m-1}),\label{3.68}\\
&\Big{|}\int_\Omega[Z^\alpha,\partial_i]\partial_i\eta^\epsilon_v\cdot Z^\alpha\eta^\epsilon_v\Big{|}\leq C\,\|\nabla\eta^\epsilon_v\|_{m-1}(\|\nabla\eta^\epsilon_v\|_{m-2}+\|\eta^\epsilon_v\|_{m-1}).\label{3.69}
\end{align}
Taking the same argument as above, we have
\begin{align}
&\Big{|}\int_\Omega[Z^\alpha,\partial_i]\eta^\epsilon_H\cdot\partial_iZ^\alpha\eta^\epsilon_H\Big{|}\leq C\,\|\nabla Z^{m-1}\eta^\epsilon_H\|(\|\nabla\eta^\epsilon_H\|_{m-2}+\|\eta^\epsilon_H\|_{m-1}),\label{3.70}\\
&\Big{|}\int_\Omega[Z^\alpha,\partial_i]\partial_i\eta^\epsilon_H\cdot Z^\alpha\eta^\epsilon_H\Big{|}\leq C\,\|\nabla\eta^\epsilon_H\|_{m-1}(\|\nabla\eta^\epsilon_H\|_{m-2}+\|\eta^\epsilon_H\|_{m-1}).\label{3.71}
\end{align}
Consequently, we get from \eqref{3.65}, \eqref{3.68}-\eqref{3.71} and Young's inequality that
\begin{align}\label{3.72}
\frac{1}{2}\frac{d}{dt}&(\|Z^\alpha\eta^\epsilon_v\|^2+\|Z^\alpha\eta^\epsilon_H\|^2)+\frac{\epsilon}{2}(\|\nabla Z^{m-1}\eta^\epsilon_v\|^2+\|\nabla Z^{m-1}\eta^\epsilon_H\|^2)\nonumber\\
\leq&\,C\epsilon\,(\|\nabla\eta^\epsilon_H\|_{m-1}^2+\|\eta^\epsilon_H\|_{m-1}^2+\|\nabla\eta^\epsilon_v\|_{m-1}^2+\|\eta^\epsilon_v\|_{m-1}^2)\nonumber\\
&+(\|F_v^b\|_{m-1}+\|F_v^\chi\|_{m-1}+\|F_v^\kappa\|_{m-1})\|\eta^\epsilon_v\|_{m-1}+\|\mathcal{C}_3\|\|\eta^\epsilon_v\|_{m-1}\nonumber\\
&+(\|F_H^b\|_{m-1}+\|F_H^\chi\|_{m-1}+\|F_H^\kappa\|_{m-1})\|\eta^\epsilon_H\|_{m-1}+\|\mathcal{C}_4\|\|\eta^\epsilon_H\|_{m-1}\nonumber\\
&-2(Z^\alpha(\chi\Pi(\nabla^2P^\epsilon n)),Z^\alpha\eta^\epsilon_v).
\end{align}

Second, we get from \eqref{3.53}-\eqref{3.55}, \eqref{3.59}, and \eqref{3.60} that
\begin{align}
\|F_v^b&\|_{m-1}+\|F_v^\chi\|_{m-1}+\|F_v^\kappa\|_{m-1}\nonumber\\
\leq\,&C\,(1+\|v^\epsilon\|_{W^{1,\infty}}+\|H^\epsilon\|_{W^{1,\infty}})(\|v^\epsilon\|_m+\|\nabla v^\epsilon\|_{m-1}+\|H^\epsilon\|_m+\|\nabla H^\epsilon\|_{m-1})\nonumber\\
&+\epsilon\, C\,\|\nabla v^\epsilon\|_m+\epsilon\, C\,\|\nabla \eta^\epsilon_v\|_{m-1}+\|\nabla P^\epsilon\|_{m-1},\label{3.77}\\
\|F_H^b&\|_{m-1}+\|F_H^\chi\|_{m-1}+\|F_H^\kappa\|_{m-1}\nonumber\\
\leq \,&C\,(1+\|v^\epsilon\|_{W^{1,\infty}}+\|H^\epsilon\|_{W^{1,\infty}})(\|v^\epsilon\|_m+\|\nabla v^\epsilon\|_{m-1}+\|H^\epsilon\|_m+\|\nabla H^\epsilon\|_{m-1})\nonumber\\
&+\epsilon\, C\,\|\nabla H^\epsilon\|_m+\epsilon\, C\,\|\nabla \eta^\epsilon_H\|_{m-1}.\label{3.78}
\end{align}

Next, we estimate $\|\mathcal{C}_3\|$ and $\|\mathcal{C}_4\|$. In the local coordinates, we observe
\begin{align}
f\cdot\nabla g=f_1\partial_{y^1}g+f_2\partial_{y^2}g+f\cdot N\partial_zg.\nonumber
\end{align}
Hence
\begin{align}
&[Z^\alpha,v^\epsilon\cdot\nabla]\eta^\epsilon_v\nonumber\\
=&\sum_{i=1,2}\sum_{|\beta|\geq1,|\beta|+|\gamma|\leq|\alpha|}Z^\beta v_i^\epsilon Z^\gamma Z_i\eta^\epsilon_v+\sum_{|\beta|\geq1,|\beta|+|\gamma|\leq|\alpha|}Z^\beta (v_3^\epsilon\cdot N) Z^\gamma\partial_z\eta^\epsilon_v\nonumber\\
=&\sum_{i=1,2}\sum_{|\beta|\geq1,|\beta|+|\gamma|\leq|\alpha|}Z^\beta v_i^\epsilon Z^\gamma Z_i\eta^\epsilon_v+\sum_{|\widetilde{\beta}|\geq1,|\widetilde{\beta}|+|\widetilde{\gamma}|\leq|\alpha|}Z^{\widetilde{\beta }}( \frac{v_3^\epsilon\cdot N}{\varphi(z)}) Z^{\widetilde{\gamma}} Z_3\eta^\epsilon_v.
\end{align}
We can do similar caculations for other terms in $\mathcal{C}_3$ and $\mathcal{C}_4$.
Consequently, from \eqref{1.4.1}, \eqref{1.4.2} and Lemma \ref{L2.3}, we get
\begin{align}
\|\mathcal{C}_3\|\leq& \,C\,(\|v^\epsilon\|_{2,\infty}+\|v^\epsilon\|_{w^{1,\infty}}+\|Z\eta^\epsilon_v\|_{L^\infty})(\|\eta^\epsilon_v\|_{m-1}+\|v^\epsilon\|_m)\nonumber\\
&+C\,(\|H^\epsilon\|_{2,\infty}+\|H^\epsilon\|_{w^{1,\infty}}+\|Z\eta^\epsilon_H\|_{L^\infty})(\|\eta^\epsilon_H\|_{m-1}+\|H^\epsilon\|_m),\label{3.80}\\
\|\mathcal{C}_4\|\leq&\, C\,(\|v^\epsilon\|_{2,\infty}+\|v^\epsilon\|_{w^{1,\infty}}+\|Z\eta^\epsilon_H\|_{L^\infty})(\|\eta^\epsilon_H\|_{m-1}+\|v^\epsilon\|_m)\nonumber\\
&+C\,(\|H^\epsilon\|_{2,\infty}+\|H^\epsilon\|_{w^{1,\infty}}+\|Z\eta^\epsilon_v\|_{L^\infty})(\|\eta^\epsilon_v\|_{m-1}+\|H^\epsilon\|_m).\label{3.81}
\end{align}

Final, it remains to deal with the terms involving the pressure $P^\epsilon$. As above, we use the split $P^\epsilon=P^\epsilon_1+P^\epsilon_2$ and we integrate by parts the terms involving $P^\epsilon_2$. We have
\begin{align}\label{3.82}
|\big{(}Z^\alpha(\chi\Pi(\nabla^2P^\epsilon n)),Z^\alpha\eta^\epsilon_v\big{)}|\leq \,&C\,\big{(}\|\nabla^2P^\epsilon_1\|_{m-1}\|\eta^\epsilon_v\|_{m-1}\nonumber\\
&+\|\nabla P^\epsilon_2\|_{m-1}(\|\nabla Z^{m-1}\eta^\epsilon_v\|+\|\eta^\epsilon_v\|_{m-1})\big{)}.
\end{align}

By combining \eqref{3.72}, \eqref{3.77}, \eqref{3.78}, \eqref{3.80}, \eqref{3.81}, \eqref{3.82} and using the induction assumption and Young's inequality, we complete the proof of Lemma \ref{L3.3}.
\end{proof}

\subsection{Pressure Estimates}It remains to estimate the pressure terms and the $L^\infty$ norms, the aim of this subsection is to give the pressure estimates.
\begin{Lemma}\label{L3.4} For every $m\geq2$, we have the following estimates:
\begin{align}
\|\nabla P^\epsilon_1\|_{m-1}+\|\nabla^2 P^\epsilon_1\|_{m-1}\leq\, &C\,(1+\|v^\epsilon\|_{W^{1,\infty}})(\|v^\epsilon\|_m+\|\nabla v^\epsilon\|_{m-1})\nonumber\\
&+C\,(1+\|H^\epsilon\|_{W^{1,\infty}})(\|H^\epsilon\|_m+\|\nabla H^\epsilon\|_{m-1}),\label{3.83}\\
\|\nabla P^\epsilon_2\|_{m-1}\leq &\,C\,\epsilon\,(\|v^\epsilon\|_m+\|\nabla v^\epsilon\|_{m-1}).\label{3.84}
\end{align}
\end{Lemma}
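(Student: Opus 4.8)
The plan is to estimate the two pressure pieces separately, using their defining boundary-value problems \eqref{3.8} and \eqref{3.9} together with standard elliptic regularity for the Neumann problem. The quantity we want to control is a conormal norm, so at each stage I would reduce the estimate to controlling the forcing terms in the $H^{m-1}$-conormal norm and then invoke elliptic regularity. Since the paper works with $H^m_{co}$ rather than full Sobolev spaces, the key technical device will be the Gagliardo-Nirenberg-Moser product inequality of Lemma \ref{L2.3}, which lets me distribute $m-1$ conormal derivatives across products such as $v^\epsilon\cdot\nabla v^\epsilon$ while paying only one $L^\infty$ factor.

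For the Euler pressure $P_1^\epsilon$, I would start from the elliptic problem \eqref{3.8}. The Neumann data and the source term are both built out of the quadratic nonlinearities $v^\epsilon\cdot\nabla v^\epsilon - H^\epsilon\cdot\nabla H^\epsilon$. Applying the regularity theory for the Neumann problem, one has schematically $\|\nabla P_1^\epsilon\|_{m-1}+\|\nabla^2 P_1^\epsilon\|_{m-1}\le C(\|v^\epsilon\cdot\nabla v^\epsilon\|_{m-1}+\|H^\epsilon\cdot\nabla H^\epsilon\|_{m-1})$ plus lower-order boundary contributions. Then Lemma \ref{L2.3} with $k=m-1$ gives $\|v^\epsilon\cdot\nabla v^\epsilon\|_{m-1}\le C\,(\|v^\epsilon\|_{L^\infty}\|\nabla v^\epsilon\|_{m-1}+\|\nabla v^\epsilon\|_{L^\infty}\|v^\epsilon\|_{m-1})$, which is dominated by $C(1+\|v^\epsilon\|_{W^{1,\infty}})(\|v^\epsilon\|_m+\|\nabla v^\epsilon\|_{m-1})$, and the same for the $H^\epsilon$ term. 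Summing these yields \eqref{3.83}.

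For the Navier-Stokes pressure $P_2^\epsilon$, the source vanishes ($\Delta P_2^\epsilon=0$) so only the Neumann data $\epsilon\Delta v^\epsilon\cdot n$ matters. Here the crucial structural point is that one must \emph{not} lose the factor of $\epsilon$: naively $\Delta v^\epsilon$ carries two normal derivatives which are uncontrolled in the conormal setting. The trick, as in \cite{MR}, is to rewrite $\Delta v^\epsilon\cdot n$ on the boundary using the divergence-free condition $\dv v^\epsilon=0$, which trades the bad normal second derivative for tangential derivatives plus first-order normal terms that are controlled by $\|v^\epsilon\|_m+\|\nabla v^\epsilon\|_{m-1}$; one essentially writes $\Delta v^\epsilon\cdot n = \nabla(\dv v^\epsilon)\cdot n - \nabla\times(\nabla\times v^\epsilon)\cdot n$ and expands the curl term along the boundary basis. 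Harmonicity of $P_2^\epsilon$ then lets me bound $\|\nabla P_2^\epsilon\|_{m-1}$ by the conormal trace norm of this rewritten Neumann datum, producing the factor $\epsilon$ out front and giving \eqref{3.84}.

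The main obstacle I anticipate is precisely this boundary-datum manipulation for $P_2^\epsilon$: controlling $\epsilon\Delta v^\epsilon\cdot n$ in a conormal trace norm without any uncontrolled normal derivative surviving. This requires carefully decomposing the Laplacian into tangential and normal contributions at the boundary, using $\dv v^\epsilon=0$ and the trace estimates of Lemma \ref{L2.2} to absorb the tangential pieces into $\|v^\epsilon\|_m+\|\nabla v^\epsilon\|_{m-1}$, while keeping the $\epsilon$ intact so that the resulting bound is genuinely small. The $P_1^\epsilon$ estimate, by contrast, is a fairly routine application of elliptic regularity combined with Lemma \ref{L2.3}, so I would expect it to go through with only bookkeeping of the commutator and boundary terms.
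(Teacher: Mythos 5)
Your proposal is correct and follows essentially the same route as the paper: elliptic regularity for the two Neumann problems \eqref{3.8}--\eqref{3.9}, Lemma \ref{L2.3} for the quadratic source and (after using $v^\epsilon\cdot n=H^\epsilon\cdot n=0$) the boundary datum of $P_1^\epsilon$, and for $P_2^\epsilon$ the elimination of second normal derivatives from $\Delta v^\epsilon\cdot n$ on $\partial\Omega$ via the divergence-free condition and the Navier boundary condition, finished off with the trace estimate of Lemma \ref{L2.2}. The only cosmetic difference is that you rewrite $\Delta v^\epsilon\cdot n$ through the identity $\Delta v^\epsilon=-\nabla\times(\nabla\times v^\epsilon)$, whereas the paper uses $\Delta v^\epsilon=2\nabla\cdot Sv^\epsilon$ together with \eqref{3.2} and \eqref{3.17}; both reduce the datum to $C\,|v^\epsilon|_{H^{m-1/2}(\partial\Omega)}$.
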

\begin{proof}
Recall that $P^\epsilon=P^\epsilon_1+P^\epsilon_2$ and $P^\epsilon_1$, $P^\epsilon_2$ are defined in \eqref{3.8} and \eqref{3.9}, respectively. 
From the standard elliptic regularity results with Neumann boundary conditions, we obtain that
\begin{align*}
&\|\nabla  P^\epsilon_1\|_{m-1}+\|\nabla^2  P^\epsilon_1\|_{m-1}\nonumber\\
\leq\, &C\,\big{(}\|\nabla v\cdot\nabla v-\nabla H\cdot\nabla H\|_{m-1}+\|v^\epsilon\cdot\nabla v^\epsilon-H^\epsilon\cdot\nabla H^\epsilon\|\nonumber\\
&+|(v^\epsilon\cdot\nabla v^\epsilon-H^\epsilon\cdot\nabla H^\epsilon)\cdot n|_{H^{m-\frac{1}{2}}(\partial\Omega)}\big{)}.
\end{align*}
Due to $v^\epsilon\cdot n=0$, $H^\epsilon\cdot n=0$ and Lemma \ref{L2.2}, we get that
\begin{align*}
|(v^\epsilon\cdot\nabla v^\epsilon-H^\epsilon\cdot\nabla H^\epsilon)\cdot n|_{H^{m-\frac{1}{2}}(\partial\Omega)})\leq \,&C\,(\|\nabla (v\otimes v)\|_{m-1}+\|v\otimes v\|_{m}\nonumber\\
&+\|\nabla (H\otimes H)\|_{m-1}+\|H\otimes H\|_{m}).
\end{align*}
Using Lemma \ref{L2.3}, we get \eqref{3.83}.

It remains to estimate $P^\epsilon_2$. By using the standard elliptic regularity results with Neumann boundary conditions again, we obtain
\begin{align}
\|\nabla P^\epsilon_2\|_{m-1}\leq C\,\epsilon\, |\Delta v^\epsilon\cdot n|_{H^{m-\frac{3}{2}}(\partial\Omega)}.\nonumber
\end{align}
Since
\begin{align}
\Delta v^\epsilon\cdot n=2\Big{(}\nabla\cdot(Sv^\epsilon n)-\sum_j(Sv^\epsilon\partial_jn)_j\Big{)},\nonumber
\end{align}
we can get
\begin{align}
|\Delta v^\epsilon\cdot n|_{H^{m-\frac{3}{2}}(\partial\Omega)}\leq C\,|\nabla\cdot(Sv^\epsilon n)|_{H^{m-\frac{3}{2}}(\partial\Omega)}+C\,|\nabla v^\epsilon|_{H^{m-\frac{3}{2}}(\partial\Omega)}.\nonumber
\end{align}
Due to \eqref{3.2} and \eqref{3.17}, we can further arrive at
\begin{align}
|\Delta v^\epsilon\cdot n|_{H^{m-\frac{3}{2}}(\partial\Omega)}\leq C\,|\nabla\cdot(Sv^\epsilon n)|_{H^{m-\frac{3}{2}}(\partial\Omega)}+C\,|v^\epsilon|_{H^{m-\frac{1}{2}}(\partial\Omega)}.\nonumber
\end{align}

Let us estimate $|\nabla\cdot(Sv^\epsilon n)|_{H^{m-\frac{3}{2}}(\partial\Omega)}$. We can use \eqref{3.17} to obtain
\begin{align*}
|\nabla\cdot(Sv^\epsilon n)|_{H^{m-\frac{3}{2}}(\partial\Omega)}\leq \,&C\,|\partial_n(Sv^\epsilon n)\cdot n|_{H^{m-\frac{3}{2}}(\partial\Omega)}\nonumber\\
&+C\,(|\Pi(Sv^\epsilon n)|_{H^{m-\frac{1}{2}}(\partial\Omega)}+|\nabla v^\epsilon|_{H^{m-\frac{3}{2}}(\partial\Omega)}).
\end{align*}
Also, due to \eqref{3.2}, \eqref{3.17} and the Navier boundary conditions, we get
\begin{equation}\label{3.94}
|\nabla\cdot(Sv^\epsilon n)|_{H^{m-\frac{3}{2}}(\partial\Omega)}\leq \,C\,|\partial_n(Sv^\epsilon n)\cdot n|_{H^{m-\frac{3}{2}}(\partial\Omega)}+|v^\epsilon|_{H^{m-\frac{1}{2}}(\partial\Omega)}.
\end{equation}
The first term of the right-hand side of \eqref{3.94} have the following estimates
\begin{align*}
|\partial_n(Sv^\epsilon n)\cdot n|_{H^{m-\frac{3}{2}}(\partial\Omega)}&\leq\, C\,|\partial_n(\partial_nv^\epsilon\cdot n)|_{H^{m-\frac{3}{2}}(\partial\Omega)}+C\,|\nabla v^\epsilon|_{H^{m-\frac{3}{2}}(\partial\Omega)}\nonumber\\
&\leq\, C\,|\partial_n(\partial_nv^\epsilon\cdot n)|_{H^{m-\frac{3}{2}}(\partial\Omega)}+C\,|v^\epsilon|_{H^{m-\frac{1}{2}}(\partial\Omega)}.
\end{align*}
By taking the normal derivative of \eqref{3.17} and using \eqref{3.2}, we obtain
\begin{align*}
|\partial_n(\partial_nv^\epsilon\cdot n)|_{H^{m-\frac{3}{2}}(\partial\Omega)}&\leq C\,|\Pi\partial_nv^\epsilon|_{H^{m-\frac{1}{2}}(\partial\Omega)}+C \,|\nabla v^\epsilon|_{H^{m-\frac{3}{2}}(\partial\Omega)}\nonumber\\
&\leq C\,|v^\epsilon|_{H^{m-\frac{1}{2}}(\partial\Omega)}.
\end{align*}
Consequently, we have
\begin{align}
|\Delta v^\epsilon\cdot n|_{H^{m-\frac{3}{2}}(\partial\Omega)}\leq C\,|v^\epsilon|_{H^{m-\frac{1}{2}}(\partial\Omega)}.\nonumber
\end{align}
By Lemma \ref{L2.2}, we finally get \eqref{3.84} which complete the proof of Lemma \ref{L3.4}.
\end{proof}

We can get from Lemmas \ref{L3.2}-\ref{L3.4} that
\begin{align}\label{3.98}
&\|v^\epsilon\|_m^2+\|H^\epsilon\|_m^2+\|\nabla v^\epsilon\|^2_{m-1}+\|\nabla H^\epsilon\|^2_{m-1}+\epsilon\int^t_0(\|\nabla^2v^\epsilon\|_{m-1}+\|\nabla^2H^\epsilon\|_{m-1})\nonumber\\
\leq\, &C\,(\|v^\epsilon(0)\|_m^2+\|H^\epsilon(0)\|_m^2+\|\nabla v^\epsilon(0)\|^2_{m-1}+\|\nabla H^\epsilon(0)\|^2_{m-1})\nonumber\\
&+C\,\int^t_0(1+\|v^\epsilon\|_{2,\infty}+\|\nabla v^\epsilon\|_{1,\infty}+\|H^\epsilon\|_{2,\infty}+\|\nabla H^\epsilon\|_{1,\infty})\nonumber\\
&\times(\|v^\epsilon\|_m^2+\|H^\epsilon\|_m^2+\|\nabla v^\epsilon\|^2_{m-1}+\|\nabla H^\epsilon\|^2_{m-1}).
\end{align}
\subsection{$L^\infty$ estimates}
In order to close the estimates in \eqref{3.98}, we need to give the $L^\infty$ estimates on $\nabla v^\epsilon$ and $\nabla H^\epsilon$. We have
\begin{Lemma}\label{L3.5} For $m_0>1$, we have the following estimates:
\begin{align}
\|&v^\epsilon\|_{2,\infty}\leq C(\|v^\epsilon\|_m+\|\nabla v^\epsilon\|_{m-1})\leq C N_m(t)^{\frac{1}{2}}\quad m\geq m_0+3,\label{3.100}\\
\|&H^\epsilon\|_{2,\infty}\leq C(\|H^\epsilon\|_m+\|\nabla H^\epsilon\|_{m-1})\leq C N_m(t)^{\frac{1}{2}}\quad m\geq m_0+3,\label{3.1001}\\
\|&v^\epsilon\|_{W^{1,\infty}}\leq C(\|v^\epsilon\|_m+\|\nabla v^\epsilon\|_{m-1}+\|\partial_zv^\epsilon\|_{L^\infty})\leq C N_m(t)^{\frac{1}{2}}\quad m\geq m_0+2,\label{3.1000}\\
\|&H^\epsilon\|_{W^{1,\infty}}\leq C(\|H^\epsilon\|_m+\|\nabla H^\epsilon\|_{m-1}+\|\partial_zH^\epsilon\|_{L^\infty})\leq C N_m(t)^{\frac{1}{2}}\quad m\geq m_0+2,\label{3.10011}
\end{align}
where  $N_m(t)$ is defined in \eqref{N_m}.
\end{Lemma}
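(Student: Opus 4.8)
The plan is to deduce all four inequalities from the anisotropic Sobolev embedding of Lemma \ref{L2.2}, applied to the conormal derivatives $Z^I v^\epsilon$ and $Z^I H^\epsilon$, together with routine commutator bookkeeping; the second inequality in each line (the bound by $N_m(t)^{\frac12}$) is then immediate from the definition \eqref{N_m} of $N_m$. Since all norms are defined componentwise and $v^\epsilon$, $H^\epsilon$ play symmetric roles, it suffices to treat one scalar component of $v^\epsilon$, the estimates for $H^\epsilon$ following verbatim.

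First I would prove the estimate \eqref{3.100} (and, mutatis mutandis, \eqref{3.1001}). For each multi-index $I$ with $|I|\le 2$, apply Lemma \ref{L2.2} to $u=Z^I v^\epsilon$ with $m_1=m-|I|$ and $m_2=m_0$:
\begin{equation}\nonumber
\|Z^I v^\epsilon\|_{L^\infty}^2\le C\big(\|\nabla Z^I v^\epsilon\|_{m_0}+\|Z^I v^\epsilon\|_{m_0}\big)\|Z^I v^\epsilon\|_{m-|I|}.
\end{equation}
The admissibility condition $m_1+m_2\ge 3$ holds since $m\ge m_0+3$ and $m_0>1$. Because $\|Z^I v^\epsilon\|_{m-|I|}\le\|v^\epsilon\|_m$ and $\|Z^I v^\epsilon\|_{m_0}\le\|v^\epsilon\|_{m_0+|I|}\le\|v^\epsilon\|_m$, the only term needing care is $\|\nabla Z^I v^\epsilon\|_{m_0}$. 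Writing $\nabla Z^I=Z^I\nabla+[\nabla,Z^I]$ and using the commutator structure already exploited around \eqref{3.66} (which produces only terms $\partial_z Z^\gamma v^\epsilon$ with $|\gamma|\le|I|-1$ and $Z^\beta v^\epsilon$ with $|\beta|\le|I|$), together with the fact that $\partial_z$ is a genuine Cartesian derivative, one obtains $\|\nabla Z^I v^\epsilon\|_{m_0}\le C(\|\nabla v^\epsilon\|_{m_0+|I|}+\|v^\epsilon\|_{m_0+|I|})\le C(\|\nabla v^\epsilon\|_{m-1}+\|v^\epsilon\|_m)$, where the last step uses $m_0+|I|\le m_0+2\le m-1$. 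Summing over $|I|\le 2$ yields \eqref{3.100}; the threshold $m\ge m_0+3$ is precisely what makes $m_0+|I|\le m-1$ available up to $|I|=2$.

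For the $W^{1,\infty}$ estimates \eqref{3.1000} and \eqref{3.10011}, the new feature is the normal derivative: the conormal fields $Z_1,Z_2$ span the tangential directions while $Z_3=\varphi(z)\partial_z$ degenerates at the boundary, so $\|\nabla v^\epsilon\|_{L^\infty}$ cannot be controlled by conormal norms alone. I would therefore split $\nabla$ in the local basis into a tangential part, bounded by $\|v^\epsilon\|_{1,\infty}$, plus the normal part $\partial_z v^\epsilon$, which accounts for the extra term $\|\partial_z v^\epsilon\|_{L^\infty}$ on the right-hand side. Combined with $\|v^\epsilon\|_{L^\infty}\le\|v^\epsilon\|_{1,\infty}$ this gives $\|v^\epsilon\|_{W^{1,\infty}}\le C(\|v^\epsilon\|_{1,\infty}+\|\partial_z v^\epsilon\|_{L^\infty})$, and $\|v^\epsilon\|_{1,\infty}$ is estimated exactly as above but now with $|I|\le 1$, so the requirement $m_0+|I|\le m-1$ relaxes to $m\ge m_0+2$. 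To pass to $N_m(t)^{\frac12}$ I would use $\|v^\epsilon\|_m,\|\nabla v^\epsilon\|_{m-1}\le N_m(t)^{\frac12}$ directly from \eqref{N_m}, and $\|\partial_z v^\epsilon\|_{L^\infty}\le\|\nabla v^\epsilon\|_{L^\infty}\le\|\nabla v^\epsilon\|_{1,\infty}\le N_m(t)^{\frac12}$. The only genuinely delicate point is the index accounting in the commutator step—ensuring the normal derivatives produced by $[\nabla,Z^I]$ stay within the budget $\|\nabla v^\epsilon\|_{m-1}$—which is exactly what forces the stated thresholds on $m$; everything else is a direct application of the embedding and the definition of $N_m$.
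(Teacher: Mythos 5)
Your proposal is correct and follows the same route as the paper: the paper's proof of this lemma consists precisely of invoking Lemma \ref{L2.2} for \eqref{3.100}--\eqref{3.1001} and declaring \eqref{3.1000}--\eqref{3.10011} obvious, and your argument simply supplies the details (the commutator bookkeeping for $\nabla Z^I$ and the tangential/normal splitting of $\nabla$) that the paper leaves implicit. The index accounting and the use of $\|\partial_z v^\epsilon\|_{L^\infty}\le\|\nabla v^\epsilon\|_{1,\infty}$ to reach $N_m(t)^{1/2}$ are all sound.
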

\begin{proof}
By using lemma \ref{L2.2}, we can obtain \eqref{3.100}-\eqref{3.1001}, and \eqref{3.1000}-\eqref{3.10011} are obvious.
\end{proof}

\begin{Lemma}\label{L3.5.1} For $m>6$, we have the following estimate:
\begin{align*}
\|\nabla v&^\epsilon\|_{1,\infty}^2+\|\nabla H^\epsilon\|_{1,\infty}^2\leq C\Big{(}N_m(0)+(1+t+\epsilon^3t^2)\int^t_0(N_m(s)+N_m(s)^2)ds\Big{)}.
\end{align*}
\end{Lemma}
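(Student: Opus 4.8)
The plan is to transfer the argument of Lemma \ref{L3.3} to the $L^\infty$ setting, the crucial point being that the maximum principle for a convection--diffusion equation gives bounds that are uniform in $\epsilon$. First I would reduce the conormal $W^{1,\infty}$ norms to an $L^\infty$ control of the tangential normal derivatives $\Pi\partial_nv^\epsilon$ and $\Pi\partial_nH^\epsilon$. Writing $\nabla v^\epsilon$ in the local frame \eqref{3.a}, the normal component $\partial_nv^\epsilon\cdot n$ is controlled through the divergence-free relation \eqref{3.17}, while all genuinely tangential derivatives are controlled by $\|v^\epsilon\|_m+\|\nabla v^\epsilon\|_{m-1}\le CN_m(t)^{1/2}$ via the anisotropic embedding of Lemma \ref{L2.2}, exactly as in Lemma \ref{L3.5}. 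Hence $\|\nabla v^\epsilon\|_{1,\infty}$ and $\|\nabla H^\epsilon\|_{1,\infty}$ are, modulo $N_m$-controlled terms, equivalent to $\|\overline{\eta}^\epsilon_v\|_{1,\infty}$ and $\|\overline{\eta}^\epsilon_H\|_{1,\infty}$, where $\overline{\eta}^\epsilon_v=\chi\Pi((\nabla v^\epsilon+(\nabla v^\epsilon)^t)n)+2\zeta\chi\Pi v^\epsilon$ and $\overline{\eta}^\epsilon_H$ is its $H$-analogue; these vanish on $\partial\Omega$ by \eqref{1.4.1}--\eqref{1.4.2} and solve the coupled system \eqref{3.48}--\eqref{3.49}.

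Because of the coupling, the transport terms $-H^\epsilon\cdot\nabla\overline{\eta}^\epsilon_H$ and $-H^\epsilon\cdot\nabla\overline{\eta}^\epsilon_v$ obstruct a direct scalar maximum principle, so I would pass to the Els\"asser-type combinations $\eta_1:=\overline{\eta}^\epsilon_v+\overline{\eta}^\epsilon_H$ and $\eta_2:=\overline{\eta}^\epsilon_v-\overline{\eta}^\epsilon_H$. Adding and subtracting \eqref{3.48}--\eqref{3.49} diagonalizes the convection,
\begin{align*}
&\partial_t\eta_1-\epsilon\Delta\eta_1+(v^\epsilon-H^\epsilon)\cdot\nabla\eta_1=G_1,\\
&\partial_t\eta_2-\epsilon\Delta\eta_2+(v^\epsilon+H^\epsilon)\cdot\nabla\eta_2=G_2,
\end{align*}
with $\eta_1=\eta_2=0$ on $\partial\Omega$, where $G_1,G_2$ gather the forcings $F_v^b,F_v^\chi,F_v^\kappa,F_H^b,F_H^\chi,F_H^\kappa$ and the pressure term $-2\chi\Pi(\nabla^2P^\epsilon n)$. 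Since $\dv(v^\epsilon\mp H^\epsilon)=0$ and $(v^\epsilon\mp H^\epsilon)\cdot n=0$ on $\partial\Omega$, each is a genuine scalar convection--diffusion equation with homogeneous Dirichlet data. Applying $Z^\alpha$ with $|\alpha|\le1$ (note that $Z^\alpha\eta_j$ still vanishes on $\partial\Omega$, since $Z_1,Z_2$ are tangent and $Z_3=\varphi(z)\partial_z$ with $\varphi(0)=0$) and then the maximum principle, after flattening the boundary, gives for $j=1,2$
$$\|\eta_j(t)\|_{1,\infty}\le C\Big(\|\eta_j(0)\|_{1,\infty}+\sum_{|\alpha|\le1}\int_0^t\big\|Z^\alpha G_j+[Z^\alpha,\,\epsilon\Delta-(v^\epsilon\mp H^\epsilon)\cdot\nabla]\eta_j\big\|_{L^\infty}\,ds\Big),$$
uniformly in $\epsilon\in(0,1]$.

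It then remains to bound the $L^\infty$ norms of the forcing and the first-order commutators by $N_m$. The products in $F_v^b,F_H^b$ and the transport and curvature pieces of $F_v^\kappa,F_H^\kappa$ are estimated through Lemmas \ref{L2.2}, \ref{L2.3} and \ref{L3.5}, contributing terms of order $N_m^{1/2}+N_m$, while the pressure term is handled by the split $P^\epsilon=P^\epsilon_1+P^\epsilon_2$ together with the elliptic estimates of Lemma \ref{L3.4}. Squaring the resulting $L^\infty$ bounds and using $\big(\int_0^tf\,ds\big)^2\le t\int_0^tf^2\,ds$ turns the forcing into $\int_0^t(N_m(s)+N_m(s)^2)\,ds$, which accounts both for the linear factor $t$ and for the $N_m+N_m^2$ structure on the right-hand side; undoing the reduction of the first paragraph then yields the stated inequality.

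The main obstacle is the $\epsilon$-weighted part of $F_v^\chi,F_v^\kappa$ (and their $H$-analogues) together with the diffusion commutator $\epsilon[Z^\alpha,\Delta]\eta_j$: these involve two normal derivatives of $(v^\epsilon,H^\epsilon)$ and therefore cannot be bounded in $L^\infty$ by the conormal quantity $N_m$ directly. To control the $\epsilon\|\chi\nabla^2v^\epsilon\|$-type contributions I would re-express them, as in \eqref{3.59}--\eqref{3.60}, through $\epsilon(\|\nabla\overline{\eta}^\epsilon_v\|_{m-1}+\|\nabla v^\epsilon\|_m+\|v^\epsilon\|_m)$, and then absorb them by using the heat-kernel smoothing in the Duhamel representation of $\eta_j$ together with Cauchy--Schwarz in time against the dissipation $\epsilon\int_0^t(\|\nabla^2v^\epsilon\|_{m-1}^2+\|\nabla^2H^\epsilon\|_{m-1}^2)\,ds$ available from \eqref{3.98}. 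Keeping the powers of $\epsilon$ sharp throughout this bookkeeping --- rather than crudely estimating $\epsilon\le1$ --- is exactly what produces the additional $\epsilon^3t^2$ factor in the final bound.
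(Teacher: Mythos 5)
Your overall architecture coincides with the paper's: reduce to a quantity equivalent to $\Pi\partial_nv^\epsilon$, $\Pi\partial_nH^\epsilon$ that vanishes on $\partial\Omega$, pass to the sum and difference $\eta_1=\overline{\eta}^\epsilon_v+\overline{\eta}^\epsilon_H$, $\eta_2=\overline{\eta}^\epsilon_v-\overline{\eta}^\epsilon_H$ to diagonalize the $v$--$H$ coupling into transport by $v^\epsilon\mp H^\epsilon$ (which is divergence free and tangent to the boundary), and recover the $\epsilon^3t^2$ factor by converting the $\epsilon$-weighted source terms into the dissipation integral via Lemma \ref{L2.2} and Cauchy--Schwarz in time, exactly as in \eqref{3.144}. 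The difference in the choice of equivalent quantity (the symmetric-gradient $\eta^\epsilon_v$ of Lemma \ref{L3.3} versus the paper's vorticity-based $\widetilde{\eta}^\epsilon_v=\chi\Pi(\widetilde{\omega}^\epsilon_v\times n-2\widetilde{v}^\epsilon\cdot\nabla n+2\zeta\widetilde{v}^\epsilon)$) is harmless, since both differ from $\chi\Pi\partial_nv^\epsilon$ by terms of order $\|v^\epsilon\|_{2,\infty}\le CN_m^{1/2}$.

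The genuine gap is in the central analytic step. You claim that applying $Z^\alpha$, $|\alpha|\le1$, and ``the maximum principle'' yields
$\|\eta_j(t)\|_{1,\infty}\le C\bigl(\|\eta_j(0)\|_{1,\infty}+\sum_{|\alpha|\le1}\int_0^t\|Z^\alpha G_j+[Z^\alpha,\epsilon\Delta-(v^\epsilon\mp H^\epsilon)\cdot\nabla]\eta_j\|_{L^\infty}\bigr)$.
The commutator $\epsilon[Z_3,\partial_{zz}]\eta_j=-2\epsilon\varphi'\partial_{zz}\eta_j-\epsilon\varphi''\partial_z\eta_j$ places two normal derivatives of $\eta_j$ (hence three derivatives of $(v^\epsilon,H^\epsilon)$, two of them normal) in the $L^\infty$ source. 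This is controlled neither by $N_m$ nor by the dissipation $\epsilon\int_0^t\|\nabla^2v^\epsilon\|^2_{m-1}$: the anisotropic embedding of Lemma \ref{L2.2} would require conormal control of $\nabla\partial_{zz}\eta_j$, i.e.\ of $\nabla^3v^\epsilon$, which is not available. So the inequality as written does not close, and ``Cauchy--Schwarz against the dissipation'' cannot repair it, because the dissipation is an $L^2$-in-space quantity while the offending term sits in $L^\infty$. The paper avoids this by never commuting $Z^\alpha$ past $\epsilon\Delta$ naively: it works in normal geodesic coordinates (so that $\partial_z=\partial_n$, the metric is block diagonal, and $\Delta=\partial_{zz}+\tfrac12\partial_z(\ln|g|)\partial_z+\Delta_{\widetilde{g}}$ with $\Delta_{\widetilde{g}}$ purely tangential), conjugates by $|g|^{1/4}$ to kill the first-order normal term, moves $\epsilon\Delta_{\widetilde{g}}$ into the source (where it is $\epsilon\|\nabla v^\epsilon\|_{3,\infty}$-type and \emph{can} be absorbed by the dissipation), and then invokes Lemma \ref{L3.6} of \cite{MR}, whose proof handles the interaction of $Z^\alpha$ with the one-dimensional Dirichlet heat semigroup $e^{\epsilon t\partial_{zz}}$ explicitly --- this is precisely where the term $\|\rho\|_{m_0+3}$ on the right-hand side of Lemma \ref{L3.6} comes from. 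Your parenthetical appeal to ``heat-kernel smoothing in the Duhamel representation'' points at the right mechanism, but without the reduction to $\epsilon\partial_{zz}$ (normal geodesic chart plus the $|g|^{1/4}$ rescaling) and without the precise statement of Lemma \ref{L3.6}, the step you actually wrote down is the one that fails.
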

\begin{proof}
We observe that, away from the boundary, the following estimates hold:
\begin{equation}
\|\beta_i\nabla v^\epsilon\|_{1,\infty}+\|\beta_i \nabla H^\epsilon\|_{1,\infty}\leq C\,(\|v^\epsilon\|_m+\|H^\epsilon\|_m),\quad m\geq 4,\nonumber
\end{equation}
where $\{\beta_i\}$ is a partition of unity subordinated to the covering \eqref{c}. In order to estimate the near boundary parts, we adopt the ideas in the Proposition $21$ of \cite{MR}. Here, we use a local parametrization in the vicinity of the boundary given by a normal geodesic system:
$$ \Psi^n(y,z)=
\left(
\begin{array}{c}
y\\
\psi(y)\\
\end{array}
\right)
-zn(y),$$
where
$$n(y)=\frac{1}{\sqrt{1+|\nabla\psi(y)|^2}}
\left(
\begin{array}{c}
\partial_1\psi(y)\\
 \partial_2\psi(y)\\
 -1
\end{array}
\right).$$
Now, we can extend $n$ and $\Pi$ in the interior by setting
$$n(\Psi^n(y,z))=n(y),\quad\Pi(\Psi^n(y,z))=\Pi(y).$$
We observe
$\partial_z=\partial_n$ and
$$\left(
\begin{array}{c}
\partial_{y^i}
\end{array}
\right)\Big{|}_{\Psi^n(y,z)}\cdot
\left(
\begin{array}{c}
\partial_z
\end{array}
\right)\Big{|}_{\Psi^n(y,z)}=0.\nonumber
$$
Hence, the Riemann metric $g$ has the following form
\begin{equation}\nonumber
g(y,z)=
\left(
\begin{matrix}
\widetilde{g}(y,z)&0\\
0&1\\
\end{matrix}
\right).
\end{equation}
Consequently, the Laplacian in this coordinate system reads:
\begin{equation}\nonumber
\Delta f=\partial_{zz}f+\frac{1}{2}\partial_z(\ln|g|)\partial_zf+\Delta_{\widetilde{g}}f,
\end{equation}
where $|g|$ is the determinant of the matrix $g$ and $\Delta_{\widetilde{g}}$ is defined by
\begin{align}\label{g}
\Delta_{\widetilde{g}}f=\frac{1}{|\widetilde{g}|^{\frac{1}{2}}}\sum_{1\leq i,j\leq2}\partial_{y^i}(\widetilde{g}^{ij}|\widetilde{g}|^{\frac{1}{2}}\partial_{y^j}f).
\end{align}
Here, $\{\widetilde{g}^{ij}\}$ is the inverse matrix to $g$ and \eqref{g} only involves tangential derivatives.

With these preparation, we now turn to estimate the near boundary parts. Due to \eqref{3.17}, \eqref{3.100} and \eqref{3.1001} , we have
\begin{align}
&\|\chi\nabla v^\epsilon\|_{1,\infty}\leq \,C\,(\|\chi \Pi\partial_nv^\epsilon\|_{1,\infty}+\|v^\epsilon\|_m+\|\nabla v^\epsilon\|_{m-1}),\label{3.106}\\
&\|\chi\nabla H^\epsilon\|_{1,\infty}\leq \,C\,(\|\chi \Pi\partial_nH^\epsilon\|_{1,\infty}+\|H^\epsilon\|_m+\|\nabla H^\epsilon\|_{m-1}).\label{3.107}
\end{align}
Hence, we need to estimate $\|\chi \Pi\partial_nv^\epsilon\|_{1,\infty}$ and $\|\chi \Pi\partial_nH^\epsilon\|_{1,\infty}$. To this end, we first introduce the vorticity
$$\omega_v^\epsilon=\nabla\times v^\epsilon,\quad \omega_H^\epsilon=\nabla\times H^\epsilon.$$
We find that
\begin{align}\label{3.108}
\Pi(\omega_v^\epsilon\times n)&=\Pi(\nabla v^\epsilon-(\nabla v^\epsilon)^t)n\nonumber\\
&=\Pi(\partial_nv^\epsilon-\nabla(v^\epsilon\cdot n)+v^\epsilon\cdot\nabla n+v^\epsilon\times(\nabla\times n)).
\end{align}
Consequently, we have
\begin{equation}\label{3.109}
\|\chi \Pi\partial_nv^\epsilon\|_{1,\infty}\leq C\,(\|\chi\Pi(\omega_v^\epsilon\times n)\|_{1,\infty}+\|v^\epsilon\|_{2,\infty}).
\end{equation}
By using \eqref{3.100} again, we get
\begin{equation}\label{3.110}
\|\chi\nabla v^\epsilon\|_{1,\infty}\leq\, C\,(\|\chi\Pi(\omega_v^\epsilon\times n)\|_{1,\infty}+\|v^\epsilon\|_m+\|\nabla v^\epsilon\|_{m-1}).
\end{equation}

Similar to $v^\epsilon$, we have the following estimates for $H^\epsilon$,
\begin{equation}\label{3.112}
\|\chi\nabla H^\epsilon\|_{1,\infty}\leq \,C\,(\|\chi\Pi(\omega_H^\epsilon\times n)\|_{1,\infty}+\|H^\epsilon\|_m+\|\nabla H^\epsilon\|_{m-1}).
\end{equation}

Below we estimate $\|\chi\Pi(\omega_v^\epsilon\times n)\|_{1,\infty}$ and $\|\chi\Pi(\omega_H^\epsilon\times n)\|_{1,\infty}$. We know that $\omega_v^\epsilon$ and $\omega_H^\epsilon$ satisfy
\begin{align*}
&\partial_t\omega_v^\epsilon-\epsilon\Delta\omega_v^\epsilon+v^\epsilon\cdot\nabla\omega_v^\epsilon-H^\epsilon\cdot\nabla\omega_H^\epsilon+\omega_H^\epsilon\cdot\nabla H^\epsilon-\omega_v^\epsilon\cdot\nabla v^\epsilon=0,\\
&\partial_t\omega_H^\epsilon-\epsilon\Delta\omega_H^\epsilon+v^\epsilon\cdot\nabla\omega_H^\epsilon-H^\epsilon\cdot\nabla\omega_v^\epsilon+[\nabla\times,v^\epsilon\cdot\nabla] H^\epsilon-[\nabla\times,H^\epsilon\cdot\nabla] v^\epsilon=0.
\end{align*}
By setting
\begin{align}
&\widetilde{\omega}_v^\epsilon(y,z):=\omega_v^\epsilon(\Psi^n(y,z)),\quad\,\,\,\,\widetilde{v}^\epsilon(y,z):=v^\epsilon(\Psi^n(y,z)),\nonumber\\
&\widetilde{\omega}_H^\epsilon(y,z):=\omega_H^\epsilon(\Psi^n(y,z)),\quad\widetilde{H}^\epsilon(y,z):=H^\epsilon(\Psi^n(y,z)),\nonumber
\end{align}
we have
\begin{align*}
\partial_t\widetilde{\omega}_v^\epsilon+&(\widetilde{v}^\epsilon)^1\partial_{y^1}\widetilde{\omega}_v^\epsilon+(\widetilde{v}^\epsilon)^2\partial_{y^2}\widetilde{\omega}_v^\epsilon
+\widetilde{v}^\epsilon\cdot n\partial_z
\widetilde{\omega}_v^\epsilon-(\widetilde{H}^\epsilon)^1\partial_{y^1}\widetilde{\omega}_H^\epsilon-(\widetilde{H}^\epsilon)^2\partial_{y^2}\widetilde{\omega}_H^\epsilon\\
&-\widetilde{H}^\epsilon\cdot n\partial_z
\widetilde{\omega}_H^\epsilon
=\epsilon(\partial_{zz}\widetilde{\omega}_v^\epsilon+\frac{1}{2}\partial_z(\ln|g|)\partial_z\widetilde{\omega}_v^\epsilon+\Delta_{\widetilde{g}}\widetilde{\omega}_v^\epsilon)+\overline{F}^v,\\
\partial_t\widetilde{\omega}_H^\epsilon+&(\widetilde{v}^\epsilon)^1\partial_{y^1}\widetilde{\omega}_H^\epsilon+(\widetilde{v}^\epsilon)^2\partial_{y^2}\widetilde{\omega}_H^\epsilon
+\widetilde{v}^\epsilon\cdot n\partial_z
\widetilde{\omega}_H^\epsilon-(\widetilde{H}^\epsilon)^1\partial_{y^1}\widetilde{\omega}_v^\epsilon-(\widetilde{H}^\epsilon)^2\partial_{y^2}\widetilde{\omega}_v^\epsilon\\
&-\widetilde{H}^\epsilon\cdot n\partial_z\widetilde{\omega}_v^\epsilon
=\epsilon(\partial_{zz}\widetilde{\omega}_H^\epsilon+\frac{1}{2}\partial_z(\ln|g|)\partial_z\widetilde{\omega}_H^\epsilon+\Delta_{\widetilde{g}}\widetilde{\omega}_H^\epsilon)+\overline{F}^H,\\
\partial_t\widetilde{v}^\epsilon+&(\widetilde{v}^\epsilon)^1\partial_{y^1}\widetilde{v}+(\widetilde{v}^\epsilon)^2\partial_{y^2}\widetilde{v}+\widetilde{v}^\epsilon\cdot n\partial_z
\widetilde{v}-(\widetilde{H}^\epsilon)^1\partial_{y^1}\widetilde{H}-(\widetilde{H}^\epsilon)^2\partial_{y^2}\widetilde{H}-\widetilde{H}^\epsilon\cdot n\partial_z
\widetilde{H}\\
&=\epsilon(\partial_{zz}\widetilde{v}^\epsilon+\frac{1}{2}\partial_z(\ln|g|)\partial_z\widetilde{v}^\epsilon+\Delta_{\widetilde{g}}\widetilde{v}^\epsilon)-(\nabla P^\epsilon)\circ\Psi^n,\\
\partial_t\widetilde{H}^\epsilon+&(\widetilde{v}^\epsilon)^1\partial_{y^1}\widetilde{H}+(\widetilde{v}^\epsilon)^2\partial_{y^2}\widetilde{H}+\widetilde{v}^\epsilon\cdot n\partial_z
\widetilde{H}-(\widetilde{H}^\epsilon)^1\partial_{y^1}\widetilde{v}-(\widetilde{H}^\epsilon)^2\partial_{y^2}\widetilde{v}-\widetilde{H}^\epsilon\cdot n\partial_z
\widetilde{v}\\
&=\epsilon(\partial_{zz}\widetilde{H}^\epsilon+\frac{1}{2}\partial_z(\ln|g|)\partial_z\widetilde{H}^\epsilon+\Delta_{\widetilde{g}}\widetilde{H}^\epsilon),
\end{align*}
where
\begin{align*}
\overline{F}^v:=\omega_v^\epsilon\cdot\nabla v^\epsilon-\omega_H^\epsilon\cdot\nabla H^\epsilon,\quad\ \overline{F}^H:=[\nabla\times,H^\epsilon\cdot\nabla] v^\epsilon-[\nabla\times,v^\epsilon\cdot\nabla] H^\epsilon.
\end{align*}

By using \eqref{3.2} and \eqref{3.108} on the boundary, we have
$$\Pi(\widetilde{\omega}_v^\epsilon\times n)=2\Pi(\widetilde{v}^\epsilon\cdot\nabla n-\zeta\widetilde{v}^\epsilon),\quad\Pi(\widetilde{\omega}_H^\epsilon\times n)=2\Pi(\widetilde{H}^\epsilon\cdot\nabla n-\zeta\widetilde{H}^\epsilon),~~z=0.$$
Consequently, we introduce the following quantities
\begin{align*}
&\widetilde{\eta}_v^\epsilon(y,z):=\chi\Pi(\widetilde{\omega}_v^\epsilon\times n-2\widetilde{v}^\epsilon\cdot\nabla n+2\zeta\widetilde{v}^\epsilon),\\
&\widetilde{\eta}_H^\epsilon(y,z):=\chi\Pi(\widetilde{\omega}_H^\epsilon\times n-2\widetilde{H}^\epsilon\cdot\nabla n+2\zeta\widetilde{H}^\epsilon).
\end{align*}
Noting that $\widetilde{\eta}_v^\epsilon(y,0)=0$ and $\widetilde{\eta}_H^\epsilon(y,0)=0$, we easily get
\begin{align}
&\!\!\!\partial_t\widetilde{\eta}_v^\epsilon+(\widetilde{v}^\epsilon)^1\partial_{y^1}\widetilde{\eta}_v^\epsilon+(\widetilde{v}^\epsilon)^2\partial_{y^2}\widetilde{\eta}_v^\epsilon
+\widetilde{v}^\epsilon\cdot n\partial_z
\widetilde{\eta}_v^\epsilon-(\widetilde{H}^\epsilon)^1\partial_{y^1}\widetilde{\eta}_H^\epsilon-(\widetilde{H}^\epsilon)^2\partial_{y^2}\widetilde{\eta}_H^\epsilon\nonumber\\
&\!\!\!\ \  -\widetilde{H}^\epsilon\cdot n\partial_z
\widetilde{\eta}_H^\epsilon
=\epsilon(\partial_{zz}\widetilde{\eta}_v^\epsilon+\frac{1}{2}\partial_z(\ln|g|)\partial_z\widetilde{\eta}_v^\epsilon)+\chi\Pi\overline{F}^v\times n+\overline{F}_v^v+\overline{F}_v^\chi+\overline{F}_v^\kappa,\label{3.120}\\
&\!\!\!\!\partial_t\widetilde{\eta}_H^\epsilon+(\widetilde{v}^\epsilon)^1\partial_{y^1}\widetilde{\eta}_H^\epsilon+(\widetilde{v}^\epsilon)^2\partial_{y^2}\widetilde{\eta}_H^\epsilon
+\widetilde{v}^\epsilon\cdot n\partial_z
\widetilde{\eta}_H^\epsilon-(\widetilde{H}^\epsilon)^1\partial_{y^1}\widetilde{\eta}_v^\epsilon-(\widetilde{H}^\epsilon)^2\partial_{y^2}\widetilde{\eta}_v^\epsilon\nonumber\\
&\!\!\!\ \ -\widetilde{H}^\epsilon\cdot n\partial_z
\widetilde{\eta}_v^\epsilon
=\epsilon(\partial_{zz}\widetilde{\eta}_H^\epsilon+\frac{1}{2}\partial_z(\ln|g|)\partial_z\widetilde{\eta}_H^\epsilon)+\overline{F}_H^\chi+\overline{F}_H^\kappa+\chi\Pi \overline{F}^H\times n,\label{3.124}
\end{align}
where
\begin{align*}
\overline{F}_v^v=&\,2\chi\Pi(\nabla P^\epsilon\cdot\nabla n-\zeta\nabla P^\epsilon)\circ\Psi^n,\\
\overline{F}_v^\chi=&\,(((\widetilde{v}^\epsilon)^1\partial_{y^1}+(\widetilde{v}^\epsilon)^2\partial_{y^2}+\widetilde{v}^\epsilon\cdot n\partial_z)\chi)\Pi(\widetilde{\omega}_v^\epsilon\times n-2\widetilde{v}^\epsilon\cdot\nabla n+2\zeta\widetilde{v}^\epsilon)\\
&-(((\widetilde{H}^\epsilon)^1\partial_{y^1}+(\widetilde{H}^\epsilon)^2\partial_{y^2}+\widetilde{H}^\epsilon\cdot n\partial_z)\chi)\Pi(\widetilde{\omega}_H^\epsilon\times n-2\widetilde{H}^\epsilon\cdot\nabla n+2\zeta\widetilde{H}^\epsilon)\\
&-\epsilon(\partial_{zz}\chi+2\partial_z\chi\partial_z+\frac{1}{2}\partial_z(\ln|g|)\partial_z\chi)\Pi(\widetilde{\omega}_v^\epsilon\times n-2\widetilde{v}^\epsilon\cdot\nabla n+2\zeta\widetilde{v}^\epsilon),\\
\overline{F}_v^\kappa=&\chi(((\widetilde{v}^\epsilon)^1\partial_{y^1}+(\widetilde{v}^\epsilon)^2\partial_{y^2})\Pi)(\widetilde{\omega}_v^\epsilon\times n-2\widetilde{v}^\epsilon\cdot\nabla n+2\zeta\widetilde{v}^\epsilon)+\epsilon\chi\Pi(\Delta_{\widetilde{g}}\widetilde{\omega}_v^\epsilon\times n)\\
&-\chi(((\widetilde{H}^\epsilon)^1\partial_{y^1}+(\widetilde{H}^\epsilon)^2\partial_{y^2})\Pi)(\widetilde{\omega}_H^\epsilon\times n-2\widetilde{H}^\epsilon\cdot\nabla n+2\zeta\widetilde{H}^\epsilon)\\
&+\chi\Pi((((\widetilde{H}^\epsilon)^1\partial_{y^1}+(\widetilde{H}^\epsilon)^2\partial_{y^2})\nabla n)\widetilde{H}^\epsilon)-\chi\Pi(\widetilde{\omega}_H^\epsilon\times((\widetilde{H}^\epsilon)^1\partial_{y^1}\\
&+(\widetilde{H}^\epsilon)^2\partial_{y^2})n)-2\epsilon\chi\Pi(\Delta_{\widetilde{g}}\widetilde{v}^\epsilon\nabla n)-\chi\Pi((((\widetilde{v}^\epsilon)^1\partial_{y^1}+(\widetilde{v}^\epsilon)^2\partial_{y^2})\nabla n)\widetilde{v}^\epsilon)\\
&+\chi\Pi(\widetilde{\omega}_v^\epsilon\times((\widetilde{v}^\epsilon)^1\partial_{y^1}+(\widetilde{v}^\epsilon)^2\partial_{y^2})n)+2\zeta\epsilon\chi\Pi(\Delta_{\widetilde{g}}\widetilde{v}^\epsilon)
,\\
\overline{F}_H^\chi=&(((\widetilde{v}^\epsilon)^1\partial_{y^1}+(\widetilde{v}^\epsilon)^2\partial_{y^2}+\widetilde{v}^\epsilon\cdot n\partial_z)\chi)\Pi(\widetilde{\omega}_H^\epsilon\times n-2\widetilde{H}^\epsilon\cdot\nabla n+2\zeta\widetilde{H}^\epsilon)\\
&-(((\widetilde{H}^\epsilon)^1\partial_{y^1}+(\widetilde{H}^\epsilon)^2\partial_{y^2}+\widetilde{H}^\epsilon\cdot n\partial_z)\chi)\Pi(\widetilde{\omega}_v^\epsilon\times n-2\widetilde{v}^\epsilon\cdot\nabla n+2\zeta\widetilde{v}^\epsilon)\\
&-\epsilon(\partial_{zz}\chi+2\partial_z\chi\partial_z+\frac{1}{2}\partial_z(\ln|g|)\partial_z\chi)\Pi(\widetilde{\omega}_H^\epsilon\times n-2\widetilde{H}^\epsilon\cdot\nabla n+2\zeta\widetilde{H}^\epsilon),\\
\overline{F}_H^\kappa=&\chi(((\widetilde{v}^\epsilon)^1\partial_{y^1}+(\widetilde{v}^\epsilon)^2\partial_{y^2})\Pi)(\widetilde{\omega}_H^\epsilon\times n-\widetilde{H}^\epsilon\cdot\nabla n+\zeta\widetilde{H}^\epsilon)+2\zeta\epsilon\chi\Pi(\Delta_{\widetilde{g}}\widetilde{H}^\epsilon)\\
&-\chi(((\widetilde{H}^\epsilon)^1\partial_{y^1}+(\widetilde{H}^\epsilon)^2\partial_{y^2})\Pi)(\widetilde{\omega}_v^\epsilon\times n-\widetilde{v}^\epsilon\cdot\nabla n+\zeta\widetilde{H}^\epsilon)
+\epsilon\chi\Pi(\Delta_{\widetilde{g}}\widetilde{\omega}_H^\epsilon\times n)\\
&+\chi\Pi((((\widetilde{H}^\epsilon)^1\partial_{y^1}+(\widetilde{H}^\epsilon)^2\partial_{y^2})\nabla n)\widetilde{v}^\epsilon)-\chi\Pi(\widetilde{\omega}_v^\epsilon\times((\widetilde{H}^\epsilon)^1\partial_{y^1}+(\widetilde{H}^\epsilon)^2\partial_{y^2})n)\\
&-2\epsilon\chi\Pi(\Delta_{\widetilde{g}}\widetilde{H}^\epsilon\nabla n)-\chi\Pi((((\widetilde{v}^\epsilon)^1\partial_{y^1}+(\widetilde{v}^\epsilon)^2\partial_{y^2})\nabla n)\widetilde{H}^\epsilon)\\
&+\chi\Pi(\widetilde{\omega}_v^\epsilon\times((\widetilde{v}^\epsilon)^1\partial_{y^1}+(\widetilde{v}^\epsilon)^2\partial_{y^2})n).
\end{align*}
We know that $\Pi$ and $n$ do not dependent the normal variable. Due to $\Delta_{\widetilde{g}}$ only involving the tangential derivatives and the derivatives of $\chi$ compactly supported away from the boundary, we easily obtain that
\begin{align}
\|\overline{F}^v_v\|_{1,\infty}\leq \,&C\,(\|\Pi \nabla P^\epsilon\|_{1,\infty},\label{3.127}\\
\|\overline{F}^v_\chi\|_{1,\infty}\leq \,&C\,(\|v^\epsilon\|_{1,\infty}\|v^\epsilon\|_{2,\infty}+\|H^\epsilon\|_{1,\infty}\|H^\epsilon\|_{2,\infty}+\epsilon\|v^\epsilon\|_{3,\infty}),\label{3.1271}\\
\|\overline{F}_v^\kappa\|_{1,\infty}\leq\, &C\,(\|v^\epsilon\|_{1,\infty}\|\nabla v^\epsilon\|_{1,\infty}+\|H^\epsilon\|_{1,\infty}\|\nabla H^\epsilon\|_{1,\infty}+\|v^\epsilon\|_{1,\infty}^2+\|H^\epsilon\|_{1,\infty}^2\nonumber\\
&+\epsilon\|v^\epsilon\|_{3,\infty}+\epsilon\|\nabla v^\epsilon\|_{3,\infty}),\label{3.1272}\\
\|\overline{F}^H_\chi\|_{1,\infty}\leq\, &C\,(\|v^\epsilon\|_{1,\infty}\|H^\epsilon\|_{2,\infty}+\|H^\epsilon\|_{1,\infty}\|v^\epsilon\|_{2,\infty}+\epsilon\|H^\epsilon\|_{3,\infty}),\label{3.128}\\
\|\overline{F}_H^\kappa\|_{1,\infty}\leq\, &C\,(\|v^\epsilon\|_{1,\infty}\|\nabla H^\epsilon\|_{1,\infty}+\|H^\epsilon\|_{1,\infty}\|\nabla v^\epsilon\|_{1,\infty}+\|v^\epsilon\|_{1,\infty}^2+\|H^\epsilon\|_{1,\infty}^2\nonumber\\
&+\epsilon\|H^\epsilon\|_{3,\infty}+\epsilon\|\nabla H^\epsilon\|_{3,\infty}).\label{3.1281}
\end{align}

A crucial estimate towards the proof of Lemma \ref{L3.5.1} is the following:
\begin{Lemma}[\!\!\cite{MR}]\label{L3.6} Let $\rho$ is a smooth solution of
\begin{equation}\nonumber
\partial_t\rho+u\cdot\nabla\rho=\epsilon\partial_{zz}\rho+f,\quad z>0,\quad\rho(t,y,0)=0,
\end{equation}
where $u$ satisfies the divergence free condition and $u\cdot n$ vanishes on the boundary.
Assume that $\rho$ and $f$ are compactly supported with respect to $z$. Then, we have the estimate:
\begin{align*}
\|\rho\|_{1,\infty}\leq \,C&\|\rho(0)\|_{1,\infty}
+C\int_0^t\big{\{}(\|u\|_{2,\infty}+\|\partial_zu\|_{1,\infty})\nonumber\\
&\quad \qquad\qquad \qquad \times(\|\rho\|_{1,\infty}+\|\rho\|_{m_0+3})+\|f\|_{1,\infty}\big{\}}\quad \text{for}\quad m_0>2.
\end{align*}
\end{Lemma}
In order to use Lemma \ref{L3.6}, we shall eliminate $\partial_z(\ln|g|)\partial_z\widetilde{\eta}_v^\epsilon$ in \eqref{3.120} and
$\partial_z(\ln|g|)\partial_z\widetilde{\eta}_H^\epsilon$ in \eqref{3.124}, respectively. We set $$\widetilde{\eta}_v^\epsilon=\frac{1}{|g|^{\frac{1}{4}}}\overline{\eta}_v^\epsilon=\gamma\overline{\eta}_v^\epsilon,
\quad\widetilde{\eta}_H^\epsilon=\frac{1}{|g|^{\frac{1}{4}}}\overline{\eta}_H^\epsilon=\gamma\overline{\eta}_H^\epsilon.$$
We note that
\begin{equation}\label{3.129}
\|\widetilde{\eta}_v^\epsilon\|_{1,\infty}\sim\|\overline{\eta}_v^\epsilon\|_{1,\infty},\quad\|\widetilde{\eta}_H^\epsilon\|_{1,\infty}\sim\|\overline{\eta}_H^\epsilon\|_{1,\infty}
\end{equation}
and $\overline{\eta}_v^\epsilon$ and $\overline{\eta}_H^\epsilon$ solve the equations
\begin{align}
\partial_t\overline{\eta}_v^\epsilon+((\widetilde{v}^\epsilon)^1\partial_{y^1}&+(\widetilde{v}^\epsilon)^2\partial_{y^2}
+\widetilde{v}^\epsilon\cdot n\partial_z)
\overline{\eta}_v^\epsilon-((\widetilde{H}^\epsilon)^1\partial_{y^1}+(\widetilde{H}^\epsilon)^2\partial_{y^2}
+\widetilde{H}^\epsilon\cdot n\partial_z)\overline{\eta}_H^\epsilon\nonumber\\
-\epsilon\partial_{zz}\overline{\eta}_v^\epsilon
=&\frac{1}{\gamma}(\chi\Pi \overline{F}^v\times n+\overline{F}_v^v+\overline{F}_v^\chi+\overline{F}_v^\kappa+\epsilon\partial_{zz}\gamma\overline{\eta}_v^\epsilon+\frac{\epsilon}{2}\partial_z(\ln|g|)\partial_z\gamma\overline{\eta}_v^\epsilon\nonumber\\
&-(\widetilde{v}^\epsilon\cdot\nabla\gamma)\overline{\eta}_v^\epsilon+(\widetilde{H}^\epsilon\cdot\nabla\gamma)\overline{\eta}_H^\epsilon):=S_1,\label{3.130}\\
\partial_t\overline{\eta}_H^\epsilon+((\widetilde{v}^\epsilon)^1\partial_{y^1}&+(\widetilde{v}^\epsilon)^2\partial_{y^2}
+\widetilde{v}^\epsilon\cdot n\partial_z)
\overline{\eta}_H^\epsilon-((\widetilde{H}^\epsilon)^1\partial_{y^1}+(\widetilde{H}^\epsilon)^2\partial_{y^2}
+\widetilde{H}^\epsilon\cdot n\partial_z)
\overline{\eta}_v^\epsilon\nonumber\\
-\epsilon\partial_{zz}\overline{\eta}_H^\epsilon
=&\frac{1}{\gamma}(\chi\Pi \overline{F}^H\times n+\overline{F}_H^\chi+\overline{F}_H^\kappa+\epsilon\partial_{zz}\gamma\overline{\eta}_H^\epsilon+\frac{\epsilon}{2}\partial_z(\ln|g|)\partial_z\gamma\overline{\eta}_H^\epsilon
\nonumber\\
&-(\widetilde{v}^\epsilon\cdot\nabla\gamma)\overline{\eta}_v^\epsilon+(\widetilde{H}^\epsilon\cdot\nabla\gamma)\overline{\eta}_H^\epsilon):=S_2.\label{3.131}
\end{align}
Finally, we set $$\eta_1:=\overline{\eta}_v^\epsilon+\overline{\eta}_H^\epsilon,\quad\eta_2:=\overline{\eta}_v^\epsilon-\overline{\eta}_H^\epsilon$$ and easily find
\begin{align}
\partial_t\eta_1+((\widetilde{v}^\epsilon)^1\partial_{y^1}&+(\widetilde{v}^\epsilon)^2\partial_{y^2}
+\widetilde{v}^\epsilon\cdot n\partial_z)
\eta_1\nonumber\\
&-((\widetilde{H}^\epsilon)^1\partial_{y^1}+(\widetilde{H}^\epsilon)^2\partial_{y^2}
+\widetilde{H}^\epsilon\cdot n\partial_z)\eta_1-\epsilon\partial_{zz}\eta_1
=S_1+S_2,\label{3.132}\\
\partial_t\eta_2+((\widetilde{v}^\epsilon)^1\partial_{y^1}&+(\widetilde{v}^\epsilon)^2\partial_{y^2}
+\widetilde{v}^\epsilon\cdot n\partial_z)
\eta_2\nonumber\\
&-((\widetilde{H}^\epsilon)^1\partial_{y^1}+(\widetilde{H}^\epsilon)^2\partial_{y^2}
+\widetilde{H}^\epsilon\cdot n\partial_z)\eta_2-\epsilon\partial_{zz}\eta_1
=S_1-S_2.\label{3.133}
\end{align}
By applying Lemma \ref{L3.6} to \eqref{3.132}, we directly obtain
\begin{align*}
\|\eta_1\|_{1,\infty}\leq& \,C\,\|\eta_1(0)\|_{1,\infty}+\int_0^t\big{\{}(\|v^\epsilon\|_{2,\infty}+\|H^\epsilon\|_{2,\infty}+\|\nabla v^\epsilon\|_{1,\infty}+\|\nabla H^\epsilon\|_{1,\infty})\nonumber\\
&\times(\|\eta_1\|_{1,\infty}+\|\eta_1\|_{m_0+3})+\|S_1\|_{1,\infty}+\|S_2\|_{1,\infty}\big{\}}\quad\text{for}\quad m_0>2.
\end{align*}
From \eqref{3.100}-\eqref{3.10011} and \eqref{3.127}-\eqref{3.1281}, we get
\begin{align}\label{3.142}
\|\eta_1\|_{1,\infty}\leq& \,C\,\|\eta_1(0)\|_{1,\infty}+\int_0^t\big{\{}(\|v^\epsilon\|_{2,\infty}+\|H^\epsilon\|_{2,\infty}+\|\nabla v^\epsilon\|_{1,\infty}+\|\nabla H^\epsilon\|_{1,\infty})\nonumber\\
&\times(\|\eta_1\|_{1,\infty}+\|\eta_1\|_{m_0+3}+\|\eta_2\|_{1,\infty}+\|\eta_2\|_{m_0+3}+N_m^{\frac{1}{2}})+N_m\nonumber\\
&+N_m^{\frac{1}{2}}+\epsilon(\|\nabla v^\epsilon\|_{3,\infty}+\|\nabla H^\epsilon\|_{3,\infty})+\|\Pi \nabla P^\epsilon\|_{1,\infty}\nonumber\\
&+\|\Pi(\nabla P^\epsilon\cdot\nabla n)\|_{1,\infty}\big{\}}\quad\text{for}\quad m_0>2.
\end{align}
Due to Lemmas \ref{L2.2} and  \ref{L3.4}, we have
\begin{equation}\label{3.143}
\|\Pi \nabla P^\epsilon\|_{1,\infty}\leq C( N_m^{\frac{1}{2}}(t)+N_m(t))\quad \text{for}\quad m\geq4.
\end{equation}
Now, we deal with the terms with the coefficient $\epsilon$. From Lemma \ref{L2.2}, we get
\begin{align}\label{3.144}
&\Big{(}\epsilon\int_0^t(\|\nabla v^\epsilon\|_{3,\infty}+\|\nabla H^\epsilon)\|_{3,\infty}\Big{)}^2\nonumber\\
&\leq C\epsilon^2\Big{(}\int_0^t\| \nabla^2v^\epsilon\|_{m-1}^{\frac{1}{2}}+\|\nabla^2H^\epsilon)\|_{m-1}^{\frac{1}{2}})N_m^{\frac{1}{4}}\Big{)}^2+C\epsilon^2t\int_0^tN_m\nonumber\\
&\leq C\epsilon^2t\Big{(}\int_0^t(\|\nabla^2v^\epsilon\|_{m-1}^2+\|\nabla^2H^\epsilon\|_{m-1}^2)\Big{)}^{\frac{1}{2}}\Big{(}\int_0^tN_m\Big{)}^{\frac{1}{2}}+C\epsilon^2t\int_0^tN_m\nonumber\\
&\leq C\epsilon\int_0^t(\|\nabla^2v^\epsilon\|_{m-1}^2+\|\nabla^2H^\epsilon\|_{m-1}^2)+C(\epsilon^2t+\epsilon^3t^2)\int_0^tN_m
\end{align}
for $m\geq m_0+4$.
Consequently, we get from \eqref{3.98}, \eqref{3.100}-\eqref{3.10011} and  \eqref{3.142}-\eqref{3.144} that
\begin{align*}
\|\eta_1\|_{1,\infty}^2\leq C N(0)+C(1+t+\epsilon^3t^2)\int_0^t(N_m^2+N_m).
\end{align*}

Similarly, we also get
\begin{align*}
\|\eta_2\|_{1,\infty}^2\leq C N(0)+C(1+t+\epsilon^4t^2)\int_0^t(N_m^2+N_m).
\end{align*}
Therefore, we complete the proof of Lemma \ref{L3.5.1}.
\end{proof}

\subsection{Proof of Theorem \ref{Th4}} Based on Lemma \ref{L3.5}, Lemma \ref{L3.5.1} and \eqref{3.98}, we can easily prove Theorem \ref{Th4}. We omit the details here.

\subsection{Proof of Theorem \ref{Th1}} 
By smoothing the initial data and using the a priori estimates obtained in Theorem \ref{Th4} and the strong compactness argument, we can prove Theorem \ref{Th1} in the same spirit of \cite{MR}. Hence we omit it here.

\section{Proof of Theorem \ref{Th3}}\label{Sec4}
In this section, we shall establish the convergence with a rate for the solution $(v^{\epsilon},H^{\epsilon})$ to $(v,H)$. We start with the rate of convergence in $L^{2}$.
\begin{Lemma}\label{L5.1} Under the assumptions in the Theorem \ref{Th3}, we have
\begin{equation}\nonumber
\|v^{\epsilon}-v \|^2+\|H^{\epsilon}-H\|^2+\epsilon\int_o^t(\|v^{\epsilon}-v\|_{H^1}^2
+\| H^{\epsilon}-H\|_{H^1}^2)\leq C\epsilon^\frac{3}{2}\quad on\quad [0,T_2],
\end{equation}
where $\epsilon$ small enough and $T_2=\min\{T_0,T_1\}$. Consequently, we have
\begin{equation}\nonumber
\|v^{\epsilon}-v\|_{L^\infty([0,T_2]\times\Omega)}+\|H^{\epsilon}-H\|_{L^\infty([0,T_2]\times\Omega)}\leq C\epsilon^{\frac{3}{10}}.
\end{equation}
\end{Lemma}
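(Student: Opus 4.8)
The plan is to run an elementary $L^2$ energy estimate on the error functions $w:=v^\epsilon-v$ and $G:=H^\epsilon-H$, to extract from it an ordinary differential inequality whose \emph{nonlinear} structure forces the rate $\epsilon^{3/2}$, and then to upgrade to $L^\infty$ by Gagliardo--Nirenberg interpolation against the uniform $W^{1,\infty}$ bound of Theorem \ref{Th1}. First I would subtract \eqref{1.7}--\eqref{1.8} from \eqref{1.1}--\eqref{1.2}; writing the nonlinearities as $v^\epsilon\cdot\nabla v^\epsilon-v\cdot\nabla v=v^\epsilon\cdot\nabla w+w\cdot\nabla v$ and similarly for the others, one gets transport-type equations for $(w,G)$ with forcing $\epsilon\Delta v^\epsilon$, $\epsilon\Delta H^\epsilon$ and a pressure gradient $\nabla(P^\epsilon-P)$. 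Pairing the $w$-equation with $w$ and the $G$-equation with $G$ in $L^2$ and adding, several terms vanish: the transport terms $(v^\epsilon\cdot\nabla w,w)$ and $(v^\epsilon\cdot\nabla G,G)$ vanish since $\dv v^\epsilon=0$ and $v^\epsilon\cdot n=0$; the pressure term vanishes since $\dv w=0$ and $w\cdot n=0$ on $\partial\Omega$; and, crucially, the magnetic cross-coupling cancels because $(H^\epsilon\cdot\nabla G,w)+(H^\epsilon\cdot\nabla w,G)=0$ by integration by parts using $\dv H^\epsilon=0$ and $H^\epsilon\cdot n=0$. The surviving terms $(w\cdot\nabla v,w)$, $(G\cdot\nabla H,w)$, $(w\cdot\nabla H,G)$, $(G\cdot\nabla v,G)$ are all controlled by $C(\|\nabla v\|_{L^\infty}+\|\nabla H\|_{L^\infty})(\|w\|^2+\|G\|^2)\leq C(\|w\|^2+\|G\|^2)$ since $(v,H)$ is smooth.

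The heart of the argument is the viscous term $\epsilon(\Delta v^\epsilon,w)$ (and its magnetic analogue). Using $\Delta v^\epsilon=2\dv(Sv^\epsilon)$ and integrating by parts, together with $w=w_\tau$ on $\partial\Omega$, gives
$$\epsilon(\Delta v^\epsilon,w)=-2\epsilon(Sv^\epsilon,Sw)+2\epsilon\int_{\partial\Omega}(Sv^\epsilon\cdot n)_\tau\cdot w_\tau.$$
The Navier condition \eqref{1.4.1} turns the boundary integrand into $-\zeta v^\epsilon_\tau\cdot w_\tau=-\zeta|w_\tau|^2-\zeta v_\tau\cdot w_\tau$; the quadratic piece $-\epsilon\zeta|w_\tau|^2$ is dissipative when $\zeta\geq0$ and otherwise absorbed via the trace inequality. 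In the interior term I split $Sv^\epsilon=Sw+Sv$: the diagonal part $-2\epsilon\|Sw\|^2$ is the good dissipation, which by Korn's inequality (Lemma \ref{L2.1}) dominates $-c_0\epsilon\|w\|_{H^1}^2+C\epsilon\|w\|^2$. The cross part $-2\epsilon(Sv,Sw)$ must \emph{not} be treated by a naive Young inequality, which would leave $C\epsilon\|\nabla v\|^2=O(\epsilon)$ and yield only the suboptimal rate $\epsilon^{1/2}$; instead I integrate by parts back, $-2\epsilon(Sv,Sw)=\epsilon(\Delta v,w)-2\epsilon\int_{\partial\Omega}(Sv\cdot n)_\tau\cdot w_\tau$, so the interior contribution is $\epsilon(\Delta v,w)\leq C\epsilon\|w\|\leq C\|w\|^2+C\epsilon^2$ with $v$ smooth. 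Every remaining boundary integral has a smooth factor times $w_\tau$, hence is bounded by $C\epsilon|w|_{L^2(\partial\Omega)}$; the trace inequality $|w|_{L^2(\partial\Omega)}^2\leq C\|w\|\,\|w\|_{H^1}+C\|w\|^2$ followed by Young's inequality gives $C\epsilon\|w\|^{1/2}\|w\|_{H^1}^{1/2}+C\epsilon\|w\|\leq\tfrac{c_0}{4}\epsilon\|w\|_{H^1}^2+C\epsilon\|w\|^{2/3}+C\|w\|^2+C\epsilon^2$. The magnetic term $\epsilon(\Delta H^\epsilon,G)$ is handled identically using \eqref{1.4.2}.

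Collecting everything and setting $y(t):=\|w(t)\|^2+\|G(t)\|^2$, and using $a^{2/3}+b^{2/3}\leq C(a^2+b^2)^{1/3}$, I obtain
$$y'(t)+c_0\epsilon\big(\|w\|_{H^1}^2+\|G\|_{H^1}^2\big)\leq Cy+C\epsilon\,y^{1/3}+C\epsilon^2,\qquad y(0)=0,$$
where $y(0)=0$ because $v^\epsilon,v$ (and $H^\epsilon,H$) share the same initial data. The sublinear term $\epsilon y^{1/3}$ is exactly what produces the rate: dropping the other terms, the balance $y'\sim\epsilon y^{1/3}$ integrates to $y\sim(\epsilon t)^{3/2}$; rigorously, the substitution $y=\epsilon^{3/2}u$ reduces the inequality to $u'\leq Cu+Cu^{1/3}+C\epsilon^{1/2}$, whose solution is bounded on $[0,T_2]$ uniformly in $\epsilon\leq1$, giving $y(t)\leq C\epsilon^{3/2}$ and, reinserting, $\epsilon\int_0^t(\|w\|_{H^1}^2+\|G\|_{H^1}^2)\leq C\epsilon^{3/2}$. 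This is the first display. For the $L^\infty$ bound I use the three-dimensional interpolation $\|w\|_{L^\infty}\leq C\|w\|_{L^2}^{2/5}\|w\|_{W^{1,\infty}}^{3/5}$; since Theorem \ref{Th1} gives $\|w\|_{W^{1,\infty}}\leq C$ uniformly and the above gives $\|w\|_{L^2}\leq C\epsilon^{3/4}$, it follows that $\|w\|_{L^\infty}\leq C\epsilon^{3/10}$, and likewise for $G$.

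The main obstacle is the correct treatment of the viscous term: one must recognise that the interior cross term $-2\epsilon(Sv,Sw)$ has to be integrated by parts \emph{back} onto the smooth field $v$ rather than absorbed by Young's inequality, and that the residual boundary integrals must be estimated through the trace inequality so as to generate the sublinear nonlinearity $\epsilon y^{1/3}$ instead of a constant-order forcing $O(\epsilon)$. It is precisely this structure that improves the naive $\epsilon^{1/2}$ estimate to the sharp $\epsilon^{3/4}$ rate in $L^2$.
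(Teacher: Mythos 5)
Your argument is correct and follows essentially the same route as the paper: an $L^2$ energy estimate on the differences with the same cancellations of the transport, pressure and magnetic cross-coupling terms, the boundary integrals controlled through the trace and interpolation inequalities so that the term \emph{linear} in $v^\epsilon-v$ is what produces the $\epsilon^{3/2}$ rate, and Gagliardo--Nirenberg against the uniform $W^{1,\infty}$ bound of Theorem \ref{Th1} for the $\epsilon^{3/10}$ bound. The only cosmetic differences are that you use the strain form $(Sv^\epsilon n)_\tau=-\zeta v^\epsilon_\tau$ together with Korn's inequality where the paper (working under \eqref{GNB1}) uses the equivalent curl form $n\times\omega_v^\epsilon=[Bv^\epsilon]_\tau$ and the identity $-\Delta=\nabla\times\nabla\times$, and that you retain the sublinear term $C\epsilon y^{1/3}$ in the differential inequality and integrate it, whereas the paper applies Young's inequality once more to convert it into $C\epsilon^{3/2}$ before invoking Gronwall.
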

\begin{proof}
 We note that $v^{\epsilon}-v$ and $H^{\epsilon}-H$ satisfy
\begin{align}
&     \partial_t(v^{\epsilon}-v)-\epsilon\Delta(v^{\epsilon}-v)+\Phi_1+\nabla(p^{\epsilon}-p)=\epsilon\Delta v \quad \text{in}\quad \Omega,\label{5.2}\\
&     \partial_t(H^{\epsilon}-H)-\epsilon\Delta(H^{\epsilon}-H)+\Phi_2=\epsilon\Delta H \quad \text{in}\quad\Omega,\label{5.3}\\
&    \nabla\cdot v^{\epsilon}=0 ~~,~~ \nabla\cdot H^{\epsilon} =0 \quad \text{in}\quad\Omega,\label{5.4}\\
&    (v^{\epsilon}-v)\cdot n=0,\ \  n\times(\omega^{\epsilon}_v-\omega_v)=[B(v^{\epsilon}-v)+Bv]_\tau-n\times \omega_v\ \ \text{on}\ \ \partial\Omega,\label{5.5}\\
&   (H^{\epsilon}-H)\cdot n=0,\ \  n\times(\omega^{\epsilon}_H-\omega_H)=[B(H^{\epsilon}-H)+BH]_\tau-n\times \omega_H\ \ \text{on}\ \ \partial\Omega,\label{5.6}
\end{align}
where $\omega^{\epsilon}_v=\nabla\times v^{\epsilon},\,\,\,\omega^{\epsilon}_H=\nabla\times H^{\epsilon},\,\,\,\omega_v=\nabla\times v,\,\,\,\omega_H=\nabla\times H$, and
\begin{align*}
\Phi_1:=\,&v\cdot\nabla(v^{\epsilon}-v)+(v^{\epsilon}-v)\cdot\nabla v+(v^{\epsilon}-v)\cdot\nabla(v^{\epsilon}-v)\\
&-H\cdot\nabla(H^{\epsilon}-H)-(H^{\epsilon}-H)\cdot\nabla H-(H^{\epsilon}-H)\cdot\nabla(H^{\epsilon}-H)\\
&+\frac{1}{2}\nabla(|H^{\epsilon}|^2-|H|^2)-\frac{1}{2}\nabla(|v^{\epsilon}|^2-|v|^2),\\
\Phi_2:=\,&(v^{\epsilon}-v)\cdot\nabla H+(v^{\epsilon}-v)\cdot\nabla (H^{\epsilon}-H)+v\cdot\nabla(H^{\epsilon}-H)\\
&-(H^{\epsilon}-H)\cdot\nabla v-(H^{\epsilon}-H)\cdot\nabla (v^{\epsilon}-v)-H\cdot\nabla(v^{\epsilon}-v).
\end{align*}
Doing basic $L^{2}$-estimate, we obtain the following identity:
\begin{align*}
\frac{1}{2}&\frac{d}{dt}(\| v^{\epsilon}-v \|^2+\|H^{\epsilon}-H\|^2)+\epsilon(\| \nabla\times(v^{\epsilon}-v) \|^2
+\|\nabla\times( H^{\epsilon}-H)\|^2)\nonumber\\
&+(\Phi_1,v^{\epsilon}-v )+(\Phi_2,H^{\epsilon}-H)+B_1+B_2=(\epsilon\Delta v,v^{\epsilon}-v)+(\epsilon\Delta H,H^{\epsilon}-H),
\end{align*}
where
\begin{align*}
B_1&:=\epsilon\int_{\partial\Omega} n\times(\omega^{\epsilon}_v-\omega_v)(v^{\epsilon}-v )\\
&=\epsilon\int_{\partial\Omega}(B(v^{\epsilon}-v )+Bv-n\times\omega_v)(v^{\epsilon}-v ),\\
B_2&:=\epsilon\int_{\partial\Omega} n\times(\omega^{\epsilon}_H-\omega_H)(H^{\epsilon}-H )\\
&=\epsilon\int_{\partial\Omega}(B(H^{\epsilon}-H )+BH-n\times\omega_H)(H^{\epsilon}-H ).
\end{align*}

First, we easily note that
\begin{equation}\label{5.12}
|(\epsilon\Delta v,v^{\epsilon}-v)|+|(\epsilon\Delta H,H^{\epsilon}-H)|\leq C(\| v^{\epsilon}-v \|^2+\| H^{\epsilon}-H\|^2)+\epsilon^2.
\end{equation}

Next, we deal with the boundary terms $B_1$ and  $B_2$. For $B_1$, we have
\begin{align*}
B_1=&\,\epsilon\int_{\partial\Omega}(B(v^{\epsilon}-v )+Bv-n\times\omega_v)(v^{\epsilon}-v )\\
\leq&\, C\,\epsilon\int_{\partial\Omega}(|v^{\epsilon}-v |^2+|v^{\epsilon}-v |).
\end{align*}
Due to the trace theorem:
\begin{equation}\label{5.14}
|u|_{L^1(\partial\Omega)}\leq C\,|u|_{L^2(\partial\Omega)}\leq C\|u\|_{H^\frac{1}{2}}
\end{equation}
and the interpolation inequality:
\begin{equation}\label{5.15}
\| u\|_{H^\frac{1}{2}(\Omega)}\leq C\,\| u\|^{\frac{1}{2}}\| u\|_{H^1}^{\frac{1}{2}},
\end{equation}
we further obtain that
\begin{align}\label{5.16}
B_1\leq\,& C\epsilon\,(\|v^{\epsilon}-v \| \| \omega^{\epsilon}_v-\omega_v\|+|v^{\epsilon}-v |_{L^1(\partial\Omega)})\nonumber\\
\leq&\,2\delta\epsilon\|\omega^{\epsilon}_v-\omega_v\|^2+C_\delta\| v^{\epsilon}-v \|^2+\epsilon^\frac{3}{2}.
\end{align}

Similarly, we also get that
\begin{equation}\label{5.17}
B_2\leq2\delta\epsilon\| \omega^{\epsilon}_H-\omega_H\|^2+C_\delta\| H^{\epsilon}-H \|^2+\epsilon^\frac{3}{2}.
\end{equation}

Finally, we deal with $(\Phi_1, v^{\epsilon}-v)$ and $(\Phi_2, H^{\epsilon}-H)$. We have
\begin{align*}
&|(\Phi_1, v^{\epsilon}-v)+(\Phi_2, H^{\epsilon}-H)|\nonumber\\
=\,&\big{|}(v^{\epsilon}-v,v\cdot\nabla(v^{\epsilon}-v)+(v^{\epsilon}-v)\cdot\nabla v+(v^{\epsilon}-v)\cdot\nabla(v^{\epsilon}-v)-\frac{1}{2}\nabla(|v^{\epsilon}|^2-|v|^2)\nonumber\\
&-H\cdot\nabla(H^{\epsilon}-H)-(H^{\epsilon}-H)\cdot\nabla H-(H^{\epsilon}-H)\cdot\nabla(H^{\epsilon}-H)\nonumber\\
&+\frac{1}{2}\nabla(|H^{\epsilon}|^2-|H|^2))+( H^{\epsilon}-H,(v^{\epsilon}-v)\cdot\nabla H+(v^{\epsilon}-v)\cdot\nabla (H^{\epsilon}-H)\nonumber\\
&+v\cdot\nabla(H^{\epsilon}-H)-(H^{\epsilon}-H)\cdot\nabla v-(H^{\epsilon}-H)\cdot\nabla (v^{\epsilon}-v)-H\cdot\nabla(v^{\epsilon}-v))\big{|}.
\end{align*}
We note that
\begin{align*}
&(\frac{1}{2}\nabla(|v^{\epsilon}|^2-|v|^2)-\frac{1}{2}\nabla(|H^{\epsilon}|^2-|H|^2),v^{\epsilon}-v)=0,\\
&(v^{\epsilon}-v,v\cdot\nabla(v^{\epsilon}-v))=0,\quad(v^{\epsilon}-v,(v^{\epsilon}-v)\cdot\nabla(v^{\epsilon}-v))=0,\\
&(H^{\epsilon}-H,v\cdot\nabla(H^{\epsilon}-H))=0,\quad(H^{\epsilon}-H,(v^{\epsilon}-v)\cdot\nabla(H^{\epsilon}-H) )=0,\\
&((H^{\epsilon}-H)\cdot\nabla (v^{\epsilon}-v),H^{\epsilon}-H)+((H^{\epsilon}-H)\cdot\nabla H^{\epsilon}-H,v^{\epsilon}-v)=0,\\
&(H^{\epsilon}-H,H\cdot\nabla(v^{\epsilon}-v))+(v^{\epsilon}-v,H\cdot\nabla(H^{\epsilon}-H))=0.
\end{align*}
Consequently, one has
\begin{equation}\label{5.24}
|(\Phi_1, v^{\epsilon}-v)+(\Phi_2, H^{\epsilon}-H)|\leq C(\|v^{\epsilon}-v\|^2+\| H^{\epsilon}-H\|^2).
\end{equation}

From \eqref{5.12}, \eqref{5.16}, \eqref{5.17} and \eqref{5.24}, we get
\begin{align*}
\frac{1}{2}\frac{d}{dt}(\| v^{\epsilon}-v\|^2+\|H^{\epsilon}-H\|^2)+\epsilon(\|\nabla\times(v^{\epsilon}-v) \|^2
+\|\nabla\times( H^{\epsilon}-H)\|^2)\nonumber\\
\leq C\|v^{\epsilon}-v\|^2+\|H^{\epsilon}-H\|^2+\epsilon^\frac{3}{2}.
\end{align*}
Then, by using Gronwall's inequality, we arrive at
\begin{align*}
\|v^{\epsilon}-v \|^2+\|H^{\epsilon}-H\|^2+\epsilon\int_0^t(\|v^{\epsilon}-v\|_{H^1}^2
+\|H^{\epsilon}-H\|_{H^1}^2)\leq C\epsilon^\frac{3}{2}.
\end{align*}

Consequently, by using the Gagliardo-Nirenberg interpolation inequality, we have
\begin{align*}
\|v^{\epsilon}-v\|_{L^\infty}+\|H^{\epsilon}-H\|_{L^\infty}\leq&\, C(\|v^{\epsilon}-v\|^{\frac{2}{5}}\|v^{\epsilon}-v\|^{\frac{3}{5}}_{W^{1,\infty}}\nonumber\\
&+\|H^{\epsilon}-H\|^{\frac{2}{5}}\|H^{\epsilon}-H\|^{\frac{3}{5}}_{W^{1,\infty}})\leq C\epsilon^{\frac{3}{10}}.
\end{align*}
\end{proof}
Before we go to prove the rate of the convergence in $H^1$, we have the following observation.
\begin{Lemma}\label{L4.3} We have
\begin{align}
\|u\|_{H^2}\leq\|P\Delta u\|+\|u\|,\quad\forall \,u\in W_{B},\nonumber
\end{align}
where
\begin{align}
W_{B}=\big{\{}u\in H^2(\Omega)\,\big{|}\,\nabla\cdot u=0\,\,\,\text{in}\,\,\,\Omega,\,\,u\cdot n=0,\,\,\,n\times(\nabla\times u)=[Bu]_\tau\,\,\,\text{on}\,\,\,\partial\Omega\big{\}}.\nonumber
\end{align}
\begin{proof}
We consider the following boundary value problem:
\begin{align}
&\gamma I-\Delta u +\nabla p=f\quad \text{in}\quad\Omega,\label{4.34}\\
&\nabla\cdot u=0\quad \text{in}\quad\Omega,\label{4.35}\\
&u\cdot n=0,\quad n\times(\nabla\times u)=[Bu]_\tau\quad \text{on}\quad\Omega,\label{4.36}
\end{align}
where $\gamma$ is a large enough positive constant. Define a bilinear form as
\begin{align}\label{4.39}
\mathcal{B}(u,\phi)=\gamma(u,\phi)+(\nabla\times u,\nabla\times \phi)+\int_{\partial\Omega}Au\cdot\phi
\end{align}
with the domain $D(\mathcal{B})=\big{\{}u\in H^1(\Omega)\,\big{|}\,\nabla\cdot u=0\,\,\,\text{in}\,\,\,\Omega,\,\,u\cdot n=0\,\,\,\text{on}\,\,\,\partial\Omega\big{\}}.$

It is clear that $\mathcal{B}(u,\phi)$ with domain $D(\mathcal{B})$ is a positive densely defined closed bilinear form. Let $\mathcal{O}$ be the self-extension of $\mathcal{B}(u,\phi)$. We find that
$W_{B}\subset D(\mathcal{O})$ and $\mathcal{O}u=\gamma u+P(-\Delta u)$ for any $u\in D(\mathcal{O})$. Let $u\in W_{B}$ and $\mathcal{O}u=f$. It follows from \eqref{4.39} and Lemma \ref{L2.5} that
\begin{align}\label{4.38}
\|u\|_{H^1}\leq C \,\|f\|.
\end{align}

Now, let $n(x)$ and $B(x)$ be the internal smooth extensions of the normal vector $n$ and $B$ in \eqref{4.36}. Based on Lemma \ref{L2.4}, we have
\begin{align}
B(x)u\times n(x)=\nabla\times k+\nabla h+\nabla g,\nonumber
\end{align}
where $k\in FH\cap H^2$, $\nabla h\in HG$ and $\nabla g\in GG$. We find
\begin{align*}
&\Delta g=\nabla\cdot(B(x)u\times n(x))\quad\text{in}\quad\Omega,\\
&g=0\quad\text{on}\quad\partial\Omega.
\end{align*}
From the elliptic regularity theory, we obtain
\begin{align}
\|\nabla g\|_{H^1}\leq C\|u\|_{H^1}.\nonumber
\end{align}
Since $HG$ is finite dimensional, the following inequality holds
\begin{align}
\|\nabla h\|_{H^1}\leq C\|u\|.\nonumber
\end{align}
Further, it follows from Lemma \ref{L2.5} and Poincar$\acute{e}$ type inequality in Lemma 3.3 of \cite{YZ1} that
\begin{align}\label{4.44}
\|k\|_{H^2}\leq C\|\nabla\times k\|_{H^1}\leq C\|u\|_{H^1}\leq C\|f\|.
\end{align}
Integrating by parts and noting that $n\times\nabla h=0$, $n\times\nabla g=0$ on the boundary, we have
\begin{align}
\int_\Omega(\nabla\times k)\cdot(\nabla\times \phi)+\int_{\partial\Omega}n\times(Bu\times n)\cdot\phi=(-\Delta k,\phi)\nonumber
\end{align}
for any $\phi\in H^1$. We observe that $n\times(Bu\times n)=Bu$, so we have
\begin{align}
\int_\Omega(\nabla\times (u-k))\cdot(\nabla\times \phi)=(P_{FH}(f-u+\Delta k),\phi),\quad\forall\,\phi\in H^1\cap FH,\nonumber
\end{align}
 where $P_{FH}$ denotes the projection on $FH$. Further, due to $\nabla\times u=\nabla\times P_{FH}(u)$, we get
\begin{align}
\int_\Omega(\nabla\times (P_{FH}(u)-k))\cdot(\nabla\times \phi)=(P_{FH}(f-\gamma u+\Delta k),\phi),\quad\forall\,\phi\in H^1\cap FH\nonumber
\end{align}
From Theorem 3.1 in \cite{YZ1}, we obtain
\begin{align}\label{4.48}
\|P_{FH}(u)-k\|_{H^2}\leq \,C\,(\|f\|+\|\Delta k\|+\|u\|).
\end{align}
Since $HH$ is finite dimensional, the following inequality holds
\begin{align}\label{4.49}
\|P_{HH}(u)\|_{H^2}\leq \,C\,\|u\|,
\end{align}
where
\begin{align}
&HH=\big{\{}u\in L^2(\Omega)\,\big{|}\,\nabla\cdot u=0,\,\,\nabla\times u=0\,\,\,\text{in}\,\,\,\Omega,\,\,u\cdot n=0\,\,\,\text{on}\,\,\,\partial\Omega\big{\}},\,\,\,\,\nonumber\\
&\mathbb{H}=HH\oplus FH.\nonumber
\end{align}

We get from \eqref{4.38}, \eqref{4.44}, \eqref{4.48} and \eqref{4.49} that
\begin{align}
\|u\|_{H^2}\leq \,C\,\|f\|.\nonumber
\end{align}
Consequently, we complete the proof of Lemma \ref{L4.3}.
\end{proof}
\end{Lemma}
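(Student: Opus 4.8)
The plan is to interpret the inequality as an a priori $H^{2}$ regularity estimate for the stationary Stokes operator carrying the slip condition $n\times(\nabla\times u)=[Bu]_\tau$. I would first fix a large parameter $\gamma>0$ and study the resolvent problem $\gamma u-\Delta u+\nabla p=f$, $\nabla\cdot u=0$ in $\Omega$, with $u\cdot n=0$ and $n\times(\nabla\times u)=[Bu]_\tau$ on $\partial\Omega$, encoding it through the bilinear form $\mathcal{B}(u,\phi)=\gamma(u,\phi)+(\nabla\times u,\nabla\times\phi)+\int_{\partial\Omega}Au\cdot\phi$ defined on divergence-free $H^{1}$ fields satisfying $u\cdot n=0$. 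The reason for this weak formulation is that the troublesome boundary relation gets absorbed into $\mathcal{B}$ through $A$ (recall $B=2(A-S(n))$), so the self-adjoint operator $\mathcal{O}$ it generates satisfies $\mathcal{O}u=\gamma u+P(-\Delta u)$ on $W_{B}$, with $P$ the Leray projector. It therefore suffices to prove $\|u\|_{H^{2}}\leq C\|f\|$ for $f=\mathcal{O}u$, and then, from $f=\gamma u-P\Delta u$, to read off $\|u\|_{H^{2}}\leq C(\|P\Delta u\|+\|u\|)$.

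The first and easiest step is the $H^{1}$ bound. Coercivity of $\mathcal{B}$ for $\gamma$ large, together with the div-curl inequality of Lemma \ref{L2.5} (which bounds $\|u\|_{H^{1}}$ by $\|\nabla\times u\|$, $\|\nabla\cdot u\|$, $\|u\|$ and $|u\cdot n|$, the last two being trivial here), yields $\|u\|_{H^{1}}\leq C\|f\|$.

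The genuine difficulty lies in upgrading this to $H^{2}$, since the slip condition is not of Dirichlet type and the classical Stokes $H^{2}$ theory does not apply directly. My strategy would be to transfer the inhomogeneous boundary data into an interior source through the Hodge-type decomposition of Lemma \ref{L2.4}. After extending $n$ and $B$ smoothly into $\Omega$, I would decompose $Bu\times n=\nabla\times k+\nabla h+\nabla g$ with $k\in FH\cap H^{2}$, $\nabla h\in HG$ and $\nabla g\in GG$. The two gradient components are harmless: $g$ solves the Dirichlet problem $\Delta g=\nabla\cdot(Bu\times n)$, $g|_{\partial\Omega}=0$, so elliptic regularity gives $\|\nabla g\|_{H^{1}}\leq C\|u\|_{H^{1}}$, while $HG$ is finite dimensional and thus $\|\nabla h\|_{H^{1}}\leq C\|u\|$. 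For the curl component $k$, the tangential boundary structure makes Lemma \ref{L2.5} applicable and, combined with the Poincar\'e-type inequality of \cite{YZ1}, delivers $\|k\|_{H^{2}}\leq C\|f\|$.

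To close the argument I would integrate by parts in the curl-curl term, using the identity $n\times(Bu\times n)=Bu$ on $\partial\Omega$ to match the slip condition, and then project onto $FH$. Because $\nabla\times u=\nabla\times P_{FH}u$, the difference $P_{FH}u-k$ solves a div-curl system with $L^{2}$ right-hand side, so the $H^{2}$ estimate of Theorem 3.1 in \cite{YZ1} gives $\|P_{FH}u-k\|_{H^{2}}\leq C(\|f\|+\|\Delta k\|+\|u\|)$; the complementary harmonic part, lying in the finite-dimensional space $HH$ of divergence- and curl-free tangential fields, satisfies $\|P_{HH}u\|_{H^{2}}\leq C\|u\|$. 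Summing these contributions produces $\|u\|_{H^{2}}\leq C\|f\|$, and hence the lemma. I expect the principal obstacle to be precisely this transfer of the coupled boundary condition into a controllable interior source: one must arrange the Hodge decomposition of $Bu\times n$ so that every summand is estimated by $\|f\|$ or $\|u\|$, avoiding any circular dependence on $\|u\|_{H^{2}}$ itself.
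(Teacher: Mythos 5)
Your proposal follows essentially the same route as the paper's own proof: the same resolvent problem and bilinear form $\mathcal{B}$, the same self-adjoint realization $\mathcal{O}u=\gamma u+P(-\Delta u)$, the same Hodge decomposition of $Bu\times n$ via Lemma \ref{L2.4} with the identical estimates for $g$, $h$, $k$, and the same projection onto $FH$ combined with Theorem 3.1 of \cite{YZ1} and the finite-dimensionality of $HH$. The argument is correct and matches the paper step for step.
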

Now we turn to prove the rate of convergence in $H^1(\Omega)$.
\begin{Lemma}\label{L5.2} 
Under the assumptions in Theorem \ref{Th3}, we have
\begin{align}
\|v^{\epsilon}-v \|_{H^1}^2&+\|H^{\epsilon}-H\|_{H^1}^2\nonumber\\
&+\epsilon\int_0^t(\|v^{\epsilon}-v\|_{H^2}^2
+\| H^{\epsilon}-H\|_{H^2}^2)\leq C\epsilon^\frac{1}{2}\quad\text{on}\quad [0,T_2],
\end{align}
where $\epsilon$ small enough and $T_2=\min\{T_0,T_1\}$. Also, we have
\begin{equation}\nonumber
\|v^{\epsilon}-v\|^p_{W^{1,p}}+\|H^{\epsilon}-H\|^p_{W^{1,p}}\leq C\epsilon^\frac{1}{2}\quad\text{on}\quad[0,T_2]
\end{equation}
for $2\leq p<\infty.$
\end{Lemma}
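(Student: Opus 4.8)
The plan is to upgrade the $L^{2}$ rate of Lemma \ref{L5.1} to an $H^{1}$ rate by a second energy estimate performed on the Leray--projected difference equations, using $P\Delta(v^{\epsilon}-v)$ and $P\Delta(H^{\epsilon}-H)$ as multipliers rather than the bare Laplacians. Applying the Leray projector $P$ to \eqref{5.2} and \eqref{5.3} annihilates the pressure gradient $\nabla(p^{\epsilon}-p)$ as well as the gradient terms $\frac{1}{2}\nabla(|H^{\epsilon}|^{2}-|H|^{2})$ and $\frac{1}{2}\nabla(|v^{\epsilon}|^{2}-|v|^{2})$ contained in $\Phi_{1},\Phi_{2}$, leaving $\partial_{t}(v^{\epsilon}-v)-\epsilon P\Delta(v^{\epsilon}-v)+P\Phi_{1}=\epsilon P\Delta v$ together with its magnetic analogue. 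Taking the $L^{2}$ inner product of these with $-P\Delta(v^{\epsilon}-v)$ and $-P\Delta(H^{\epsilon}-H)$ and summing is exactly the substitute for the naive pairing against $\Delta(v^{\epsilon}-v),\Delta(H^{\epsilon}-H)$: the latter produces viscous boundary integrals that, as the authors note, are not controllable, whereas the self-pairing $\epsilon\|P\Delta(v^{\epsilon}-v)\|^{2}$ carries a clean sign and no boundary contribution.

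The dissipation so produced is $\epsilon(\|P\Delta(v^{\epsilon}-v)\|^{2}+\|P\Delta(H^{\epsilon}-H)\|^{2})$, and this is where Lemma \ref{L4.3} enters: it converts $\|P\Delta(\cdot)\|$ into a full $H^{2}$ norm, so that the left-hand side controls $\epsilon\int_{0}^{t}(\|v^{\epsilon}-v\|_{H^{2}}^{2}+\|H^{\epsilon}-H\|_{H^{2}}^{2})$. Since $v^{\epsilon},H^{\epsilon}\in W_{B}$ by \eqref{GNB1} but the limit $(v,H)$ satisfies only $v\cdot n=H\cdot n=0$, the difference $v^{\epsilon}-v$ does not lie in $W_{B}$; the discrepancy appearing in \eqref{5.5}--\eqref{5.6} is the \emph{smooth}, $O(1)$ datum $[Bv]_{\tau}-n\times\omega_{v}$, which I would remove by subtracting a fixed smooth lift so that Lemma \ref{L4.3} applies to the corrected field, the lift contributing only lower-order terms handled by trace inequalities. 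The time-derivative term yields $\frac{1}{2}\frac{d}{dt}(\|\nabla(v^{\epsilon}-v)\|^{2}+\|\nabla(H^{\epsilon}-H)\|^{2})$ up to boundary integrals which, because $v^{\epsilon}-v$ and $H^{\epsilon}-H$ are tangent to $\partial\Omega$, reduce to pairings of the tangential traces against $\Pi\partial_{n}(\cdot)$ and are rewritten through \eqref{5.5}--\eqref{5.6} and Lemma \ref{L2.5}.

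It remains to estimate the convective and coupling terms $(P\Phi_{1},P\Delta(v^{\epsilon}-v))$ and $(P\Phi_{2},P\Delta(H^{\epsilon}-H))$ together with the forcing $-\epsilon(P\Delta v,P\Delta(v^{\epsilon}-v))$ and its magnetic counterpart. The forcing is harmless: by Young and the smoothness of $(v,H)$ it is bounded by $\frac{1}{4}\epsilon\|P\Delta(v^{\epsilon}-v)\|^{2}+C\epsilon$, contributing $O(\epsilon)$ after integration. The essential point for the convective terms is to \emph{avoid any loss of a power of $\epsilon$}: rather than Young's inequality with an $\epsilon^{-1}$ weight (which would be fatal, since $\int_{0}^{t}\|v^{\epsilon}-v\|_{H^{1}}^{2}\le C\epsilon^{1/2}$ only), I would integrate by parts, using $\nabla\cdot v=0$ and $v\cdot n=0$, so that a term such as $(v\cdot\nabla(v^{\epsilon}-v),\Delta(v^{\epsilon}-v))$ produces $\|\nabla v\|_{L^{\infty}}\|\nabla(v^{\epsilon}-v)\|^{2}$ with an $O(1)$ coefficient furnished by the uniform $W^{1,\infty}$ bound of Theorem \ref{Th1}; the remaining second-order factor is absorbed into the dissipation. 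These $\|\nabla(v^{\epsilon}-v)\|^{2},\|\nabla(H^{\epsilon}-H)\|^{2}$ contributions are then treated by Gronwall, with the initial data matching ($v^{\epsilon}|_{t=0}=v|_{t=0}$) so that the residual is driven entirely by the forcing and the boundary integrals; the latter, carrying the $O(1)$ datum $[Bv]_{\tau}-n\times\omega_{v}$ and estimated via trace inequalities and the $L^{2}$ rate $\|v^{\epsilon}-v\|+\|H^{\epsilon}-H\|\le C\epsilon^{3/4}$ of Lemma \ref{L5.1}, are what fix the final rate at $\epsilon^{1/2}$.

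The main obstacle is the strong velocity--magnetic coupling, exactly as in the conormal estimates: summing the two tested equations produces cross terms such as $(H\cdot\nabla(H^{\epsilon}-H),P\Delta(v^{\epsilon}-v))+(H\cdot\nabla(v^{\epsilon}-v),P\Delta(H^{\epsilon}-H))$ that have no individual sign, so one must reproduce at the $H^{1}$ level the cancellations exploited in Lemma \ref{L5.1}, or, in the spirit of the $\eta_{1},\eta_{2}$ device of Lemma \ref{L3.5.1}, pass to the sum and difference $(v^{\epsilon}-v)\pm(H^{\epsilon}-H)$ to diagonalize the transport coupling before estimating. A Gronwall argument then yields $\|v^{\epsilon}-v\|_{H^{1}}^{2}+\|H^{\epsilon}-H\|_{H^{1}}^{2}+\epsilon\int_{0}^{t}(\|v^{\epsilon}-v\|_{H^{2}}^{2}+\|H^{\epsilon}-H\|_{H^{2}}^{2})\le C\epsilon^{1/2}$ on $[0,T_{2}]$. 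Finally, the $W^{1,p}$ statement for $2\le p<\infty$ follows by Gagliardo--Nirenberg interpolation between this $H^{1}=W^{1,2}$ bound and the $\epsilon$-uniform $W^{1,\infty}$ bound of Theorem \ref{Th1}, giving $\|v^{\epsilon}-v\|_{W^{1,p}}^{p}+\|H^{\epsilon}-H\|_{W^{1,p}}^{p}\le C\epsilon^{1/2}$.
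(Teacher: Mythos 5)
Your overall strategy coincides with the paper's: test the difference equations against $P\Delta(v^{\epsilon}-v)$ and $P\Delta(H^{\epsilon}-H)$, use Lemma \ref{L4.3} to convert the dissipation $\epsilon\|P\Delta(\cdot)\|^{2}$ into the $\epsilon\int_0^t\|\cdot\|_{H^2}^2$ control, exploit the antisymmetric cancellations of the $v$--$H$ coupling, drive the rate by the $L^2$ estimate of Lemma \ref{L5.1} and the $O(1)$ boundary datum $[Bv]_\tau-n\times\omega_v$, and finish $W^{1,p}$ by interpolating the $H^1$ rate against the uniform $W^{1,\infty}$ bound. All of that matches.

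There is, however, a genuine gap at the step you dispatch with ``integrate by parts \ldots so that $(v\cdot\nabla(v^{\epsilon}-v),\Delta(v^{\epsilon}-v))$ produces $\|\nabla v\|_{L^\infty}\|\nabla(v^{\epsilon}-v)\|^2$.'' The multiplier is $P\Delta$, not $\Delta$, so after writing $(\Phi_i,P\Delta(\cdot))=(P\Phi_i,-\Delta(\cdot))$ and integrating by parts you are left with interior terms $(\nabla\times\Phi_i,\omega^\epsilon-\omega)$ \emph{plus boundary integrals of the form} $\int_{\partial\Omega}\bigl(n\times(\omega^{\epsilon}-\omega)\bigr)\cdot P\Phi_i$. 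Since $P\Phi_i=\Phi_i+\nabla\phi_i$ with $\phi_i$ a harmonic-pressure corrector, the trace of $\nabla\phi_i$ on $\partial\Omega$ has no usable estimate, and this is precisely the obstruction the paper flags and that occupies most of its proof (the terms $I_2$ and $I_3$). The paper's resolution --- extending $n$ and $B$ smoothly into $\Omega$ and using the identity $\int_{\partial\Omega}(n\times a)\cdot b=(\nabla\times a,b)-(a,\nabla\times b)$ to trade every boundary pairing against $P\Phi_i$ for interior pairings, where $\nabla\times P\Phi_i=\nabla\times\Phi_i$ and $\|P\Phi_i\|\le\|\Phi_i\|$, together with the observation that $v\cdot\nabla(v^{\epsilon}-v)-(v^{\epsilon}-v)\cdot\nabla v=\nabla\times((v^{\epsilon}-v)\times v)\in\mathbb{H}$ so that $P$ acts trivially on the highest-order part of $\Phi_1$ in the $I_3$ term --- is the key idea of the proof, and it is absent from your proposal. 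Without it (or an equivalent device) the boundary contributions carrying $P\Phi_i$ are not controlled and the $\epsilon^{1/2}$ rate is not established. Two smaller points: your plan to subtract a smooth lift so that Lemma \ref{L4.3} applies to the corrected field is a reasonable (arguably cleaner) way to handle the boundary mismatch that the paper treats directly, and your sum/difference diagonalization is an unnecessary detour here --- at the vorticity level the coupling terms cancel exactly after integration by parts using $H\cdot n=0$ and $\nabla\cdot H=0$, which is what the paper uses.
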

\begin{proof}We note
\begin{equation}\nonumber
\partial_t(v^{\epsilon}-v)\cdot n=0,\quad\partial_t(H^{\epsilon}-H)\cdot n=0.
\end{equation}
It follows from \eqref{5.2}-\eqref{5.6} that
\begin{align*}
\frac{1}{2}&\frac{d}{dt}(\|\omega_ v^{\epsilon}-\omega_v \|^2+\|\omega_ H^{\epsilon}-\omega_H\|^2)+\epsilon(\|P\Delta(v^{\epsilon}-v) \|^2
+\| P\Delta( H^{\epsilon}-H)\|^2)\nonumber\\
=&(\Phi_1,P\Delta (v^{\epsilon}-v ))+(\Phi_2,P\Delta (H^{\epsilon}-H))+B_1+B_2-(\epsilon\Delta v,P\Delta(v^{\epsilon}-v))\nonumber\\
&-(\epsilon\Delta H,P\Delta(H^{\epsilon}-H)),
\end{align*}
where $\Phi_1$ and $\Phi_2$ are as same as these in Lemma \ref{L5.1}, but $B_1$ and $B_2$ have different forms:
\begin{equation}\nonumber
B_1:=\int_{\partial\Omega}\partial_t(v^{\epsilon}-v )\cdot(n\times(\omega^{\epsilon}_v-\omega_v)),\quad B_2:=\int_{\partial\Omega}\partial_t(H^{\epsilon}-H )\cdot(n\times(\omega^{\epsilon}_H-\omega_H)).
\end{equation}
Now, let us deal with these two boundary terms as follows
\begin{align}\label{5.31}
&B_1+B_2\nonumber\\
=&\int_{\partial\Omega}\partial_t(v^{\epsilon}-v )\cdot(B(v^{\epsilon}-v )+Bv-n\times\omega_v)\nonumber\\
&+\int_{\partial\Omega}\partial_t(H^{\epsilon}-H )\cdot(B(H^{\epsilon}-H )+BH-n\times\omega_H)\nonumber\\
=&\frac{1}{2}\frac{d}{dt}\Big{(}\int_{\partial\Omega}B(v^{\epsilon}-v )\cdot(v^{\epsilon}-v )+2\int_{\partial\Omega}(v^{\epsilon}-v)\cdot(Bv-n\times\omega_v)\Big{)}-\widetilde{B}_1\nonumber\\
&+\frac{1}{2}\frac{d}{dt}\Big{(}\int_{\partial\Omega}B(H^{\epsilon}-H )\cdot(H^{\epsilon}-H )+2\int_{\partial\Omega}(H^{\epsilon}-H)\cdot(BH-n\times\omega_H)\Big{)}-\widetilde{B}_2,
\end{align}
where
\begin{equation}\label{5.32}
\widetilde{B}_1:=\int_{\partial\Omega}(v^{\epsilon}-v )\cdot\partial_t(Bv-n\times\omega_v),\quad\widetilde{B}_2:=\int_{\partial\Omega}(H^{\epsilon}-H )\cdot\partial_t(BH-n\times\omega_H).
\end{equation}
It follows from Lemma \ref{L5.1}, \eqref{5.14} and \eqref{5.15} that
\begin{equation}\label{5.33}
\begin{split}
|\widetilde{B}_1+\widetilde{B}_2|\leq&\, C\Big{(}\int_{\partial\Omega}|v^{\epsilon}-v|^{2}\Big{)}^\frac{1}{2}+\,C\Big{(}\int_{\partial\Omega}|H^{\epsilon}-H|^2\Big{)}^\frac{1}{2}\\
\leq&\,\delta\,(\|\omega_ v^{\epsilon}-\omega_v \|^2+\|\omega_ H^{\epsilon}-\omega_H \|^2)+C\epsilon^\frac{1}{2}.
\end{split}
\end{equation}
We easily get
\begin{align}\label{5.34}
&|(\epsilon\Delta v,P\Delta(v^{\epsilon}-v))+(\epsilon\Delta H,P\Delta(H^{\epsilon}-H))|\nonumber\\
\leq&\,\frac{\epsilon}{2}(\| P\Delta(v^{\epsilon}-v)\|^2+\| P\Delta(H^{\epsilon}-H)\|^2)+C\,\epsilon
\end{align}
and
\begin{align}\label{5.35}
&-(\Phi_1,P\Delta (v^{\epsilon}-v ))-(\Phi_2,P\Delta (H^{\epsilon}-H))\nonumber\\
=&(P\Phi_1,-\Delta (v^{\epsilon}-v ))+(P\Phi_2,-\Delta (H^{\epsilon}-H))\nonumber\\
=&(\nabla\times\Phi_1,\omega_ v^{\epsilon}-\omega_v)+\int_{\partial\Omega}n\times(\omega_ v^{\epsilon}-\omega_v)\cdot P\Phi_1\nonumber\\
&+(\nabla\times\Phi_2,\omega_ H^{\epsilon}-\omega_H)+\int_{\partial\Omega}n\times(\omega_ H^{\epsilon}-\omega_H)\cdot P\Phi_2\nonumber\\
=&(\nabla\times\Phi_1,\omega_ v^{\epsilon}-\omega_v)+\int_{\partial\Omega}(B(v^{\epsilon}-v )+Bv-n\times\omega_v)\cdot P\Phi_1\nonumber\\
&+(\nabla\times\Phi_2,\omega_ H^{\epsilon}-\omega_H)+\int_{\partial\Omega}(B(H^{\epsilon}-H)+BH-n\times\omega_H)\cdot P\Phi_2.
\end{align}
From \eqref{5.31}, \eqref{5.33}, \eqref{5.34}, and \eqref{5.35}, we arrive at
\begin{align}\label{5.36}
\frac{1}{2}\frac{d}{dt}E&+\frac{\epsilon}{2}(\|P\Delta(v^{\epsilon}-v) \|^2
+\| P\Delta( H^{\epsilon}-H)\|^2)\nonumber\\
&\leq I_1 + I_2 + I_3 +C(\|\omega_ v^{\epsilon}-\omega_v \|^2+\|\omega_ H^{\epsilon}-\omega_H \|^2+\epsilon^\frac{1}{2}),
\end{align}
where
\begin{align*}
E:=&\|\omega_ v^{\epsilon}-\omega_v \|^2+\|\omega_ H^{\epsilon}-\omega_H\|^2\\
&-\int_{\partial\Omega}B(v^{\epsilon}-v )\cdot(v^{\epsilon}-v )-2\int_{\partial\Omega}(v^{\epsilon}-v)\cdot(Bv-n\times\omega_v)\\
&-\int_{\partial\Omega}B(H^{\epsilon}-H )\cdot(H^{\epsilon}-H )-2\int_{\partial\Omega}(H^{\epsilon}-H)\cdot(BH-n\times\omega_H),\\
I_1:=&|(\nabla\times\Phi_1,\omega_ v^{\epsilon}-\omega_v)+(\nabla\times\Phi_2,\omega_ H^{\epsilon}-\omega_H)|,\\
I_2:=&\Big{|}\int_{\partial\Omega}B(v^{\epsilon}-v )\cdot P\Phi_1+\int_{\partial\Omega}B(H^{\epsilon}-H)\cdot P\Phi_2\Big{|},\\
I_3:=&\Big{|}\int_{\partial\Omega}(Bv-n\times\omega_v)\cdot P\Phi_1+\int_{\partial\Omega}(BH-n\times\omega_H)\cdot P\Phi_2\Big{|}.
\end{align*}
Now we estimate the terms $I_1$, $I_2$ and $I_3$ in turn. The term $I_1$ can be estimated easily by using Sobolev inequalities and the obtained uniform bounds for $v^{\epsilon}$ and $H^{\epsilon}$ in Theorem \ref{Th1}. We have
\begin{equation}\nonumber
I_1=|(\nabla\times\Phi_1,\omega_ v^{\epsilon}-\omega_v)+(\nabla\times\Phi_2,\omega_ H^{\epsilon}-\omega_H)|\leq I_{11}+I_{12},
\end{equation}
where
\begin{align*}
I_{11}=&\big{|}(v\cdot\nabla(\omega_ v^{\epsilon}-\omega_v)+(v^{\epsilon}-v)\cdot\nabla \omega_v+(v^{\epsilon}-v)\cdot\nabla(\omega_ v^{\epsilon}-\omega_v)\nonumber\\
 &-H\cdot\nabla(\omega_ H^{\epsilon}-\omega_H)-(H^{\epsilon}-H)\cdot\nabla \omega_H-(H^{\epsilon}-H)\cdot\nabla(\omega_ H^{\epsilon}-\omega_H),\omega_ v^{\epsilon}-\omega_v)\nonumber\\
&+((v^{\epsilon}-v)\cdot\nabla \omega_H+(v^{\epsilon}-v)\cdot\nabla (\omega_ H^{\epsilon}-\omega_H)+v\cdot\nabla(\omega_ H^{\epsilon}-\omega_H)\nonumber\\
&-(H^{\epsilon}-H)\cdot\nabla \omega_v-(H^{\epsilon}-H)\cdot\nabla (\omega_ v^{\epsilon}-\omega_v)-H\cdot\nabla(\omega_ v^{\epsilon}-\omega_v),\omega_ H^{\epsilon}-\omega_H)\big{|},\\
I_{12}=&\big{|}([\nabla\times,v\cdot\nabla](v^{\epsilon}-v)+[\nabla\times,(v^{\epsilon}-v)\cdot\nabla] v+[\nabla\times,(v^{\epsilon}-v)\cdot\nabla](v^{\epsilon}-v)\nonumber\\ &-[\nabla\times,H\cdot\nabla](H^{\epsilon}-H)-[\nabla\times,(H^{\epsilon}-H)\cdot\nabla]H\nonumber\\
&-[\nabla\times,(H^{\epsilon}-H)\cdot\nabla](H^{\epsilon}-H),\omega_ v^{\epsilon}-\omega_v)\nonumber\\
&+([\nabla\times,(v^{\epsilon}-v)\cdot\nabla ]H+[\nabla\times,(v^{\epsilon}-v)\cdot\nabla ](H^{\epsilon}-H)+[\nabla\times,v\cdot\nabla](H^{\epsilon}-H)\nonumber\\
&-[\nabla\times,(H^{\epsilon}-H)\cdot\nabla] v-[\nabla\times,(H^{\epsilon}-H)\cdot\nabla](v^{\epsilon}-v)\nonumber\\
&-[\nabla\times ,H\cdot\nabla](v^{\epsilon}-v),\omega_ H^{\epsilon}-\omega_H)\big{|}.
\end{align*}
We observe that
\begin{align*}
&(v\cdot\nabla(\omega_ v^{\epsilon}-\omega_v),\omega_ v^{\epsilon}-\omega_v)=0,\quad\quad((v^{\epsilon}-v)\cdot\nabla(\omega_ v^{\epsilon}-\omega_v),\omega_ v^{\epsilon}-\omega_v)=0,\\
&(v\cdot\nabla(\omega_ H^{\epsilon}-\omega_H),\omega_ H^{\epsilon}-\omega_H)=0,\quad((v^{\epsilon}-v)\cdot\nabla (\omega_ H^{\epsilon}-\omega_H),\omega_ H^{\epsilon}-\omega_H)=0,\\
&(H\cdot\nabla(\omega_ H^{\epsilon}-\omega_H)+(H^{\epsilon}-H)\cdot\nabla(\omega_ H^{\epsilon}-\omega_H),\omega_ v^{\epsilon}-\omega_v)\nonumber\\
&\quad\quad\qquad\qquad+((H^{\epsilon}-H)\cdot\nabla (\omega_ v^{\epsilon}-\omega_v)+H\cdot\nabla(\omega_ v^{\epsilon}-\omega_v),\omega_ H^{\epsilon}-\omega_H)=0.
\end{align*}
Hence
\begin{equation}\label{5.48}
I_1\leq C(\|\omega_ v^{\epsilon}-\omega_v \|^2+\|\omega_ H^{\epsilon}-\omega_H \|^2).
\end{equation}

Next, we estimate the term $I_2$. We note that
\begin{equation}\nonumber
P\Phi=\Phi+\nabla\phi
\end{equation}
holds for any function $\Phi\in L^2(\Omega)$, so we need to estimate the scalar function $\phi$ which is difficult to estimate on the boundary. In order to overcome this difficulty, we need to transform it to an estimate on $\Omega$. First, we should extend $n$ and $B$ to the interior of $\Omega$ as follows:
\begin{equation}\nonumber
n(x)=\varphi (r(x))\nabla(r(x)),\quad B(x)=\varphi (r(x))B(\Pi x),
\end{equation}
where
\begin{align*}
r(x)=\min_{y\in\partial\Omega}d(x,y),\quad
\Pi x=y_x\in\partial\Omega
\end{align*}
such that
\begin{equation}\nonumber
r(x):=d(x,y_x)
\end{equation}
is well-defined in $\Omega_\sigma=\{x\in\Omega, r(x)\leq2\sigma\}$ for some $\sigma>0$ and  $\varphi(s)\in C_c^\infty{[0,2\sigma)}$ satisfying
\begin{equation}\nonumber
\varphi(s)=1\quad\text{in}\quad[0,\sigma].
\end{equation}
Then, we can obtain that
\begin{align}\label{5.55}
I_2=\,&\Big{|}\int_{\partial\Omega}B(v^{\epsilon}-v )\cdot P\Phi_1+\int_{\partial\Omega}B(H^{\epsilon}-H)\cdot P\Phi_2\Big{|}\nonumber\\
=\,&\Big{|}\int_{\partial\Omega}\big{(}(n\times B(v^{\epsilon}-v )\cdot (n\times P\Phi_1)+(n\times B(H^{\epsilon}-H))\cdot (n\times P\Phi_2)\big{)}\Big{|}\nonumber\\
=\,&\big{|}(n\times B(v^{\epsilon}-v ),\nabla\times\Phi_1)+(n\times B(H^{\epsilon}-H ),\nabla\times\Phi_2)\nonumber\\
&-(\nabla\times (n\times B(v^{\epsilon}-v )),P\Phi_1)-(\nabla\times (n\times B(H^{\epsilon}-H )),P\Phi_2)\big{|}.
\end{align}
We easily get that
\begin{align}
&|(\nabla\times (n\times B(v^{\epsilon}-v )),P\Phi_1)+(\nabla\times (n\times B(H^{\epsilon}-H )),P\Phi_2)|\nonumber\\
\leq\,&\|n\times B(v^{\epsilon}-v )\|\|P\Phi_1\|+\|n\times B(H^{\epsilon}-H )\|\|P\Phi_2\|\nonumber\\
\leq\,& C(\|\omega_ v^{\epsilon}-\omega_v \|^2+\|\omega_ H^{\epsilon}-\omega_H \|^2).\label{5.56}
\end{align}
Now, we turn to estimate the remaining terms in \eqref{5.55}:
\begin{equation}\nonumber
|(n\times B(v^{\epsilon}-v ),\nabla\times\Phi_1)+(n\times B(H^{\epsilon}-H ),\nabla\times\Phi_2)|\leq I_{21}+I_{22},
\end{equation}
where
\begin{align*}
I_{21}=\,&\big{|}(v\cdot\nabla(\omega_ v^{\epsilon}-\omega_v)+(v^{\epsilon}-v)\cdot\nabla \omega_v+(v^{\epsilon}-v)\cdot\nabla(\omega_ v^{\epsilon}-\omega_v)\nonumber\\
&-H\cdot\nabla(\omega_ H^{\epsilon}-\omega_H)-(H^{\epsilon}-H)\cdot\nabla \omega_H\nonumber\\
&-(H^{\epsilon}-H)\cdot\nabla(\omega_ H^{\epsilon}-\omega_H),n\times B(v^{\epsilon}-v ))\nonumber\\
&+((v^{\epsilon}-v)\cdot\nabla \omega_H+(v^{\epsilon}-v)\cdot\nabla (\omega_ H^{\epsilon}-\omega_H)+v\cdot\nabla(\omega_ H^{\epsilon}-\omega_H)\nonumber\\
&-(H^{\epsilon}-H)\cdot\nabla \omega_v-(H^{\epsilon}-H)\cdot\nabla (\omega_ v^{\epsilon}-\omega_v)\nonumber\\
&-H\cdot\nabla(\omega_ v^{\epsilon}-\omega_v),n\times B(H^{\epsilon}-H ))\big{|},\\
I_{22}=\,&\big{|}([\nabla\times,v\cdot\nabla](v^{\epsilon}-v)+[\nabla\times,(v^{\epsilon}-v)\cdot\nabla] v+[\nabla\times,(v^{\epsilon}-v)\cdot\nabla](v^{\epsilon}-v)\nonumber\\ &-[\nabla\times,H\cdot\nabla](H^{\epsilon}-H)-[\nabla\times,(H^{\epsilon}-H)\cdot\nabla]H\nonumber\\
&-[\nabla\times,(H^{\epsilon}-H)\cdot\nabla](H^{\epsilon}-H),n\times B(v^{\epsilon}-v ))\nonumber\\
&+([\nabla\times,(v^{\epsilon}-v)\cdot\nabla ]H+[\nabla\times,(v^{\epsilon}-v)\cdot\nabla ](H^{\epsilon}-H)\nonumber\\
&+[\nabla\times,v\cdot\nabla](H^{\epsilon}-H)-[\nabla\times,(H^{\epsilon}-H)\cdot\nabla] v\nonumber\\
&-[\nabla\times,(H^{\epsilon}-H)\cdot\nabla](v^{\epsilon}-v)-[\nabla\times ,H\cdot\nabla](v^{\epsilon}-v),n\times B(H^{\epsilon}-H ))\big{|}.
\end{align*}
By using H\"{o}lder's inequality and Sobolev inequality, we obtain
\begin{equation}\label{5.60}
|(n\times B(v^{\epsilon}-v ),\nabla\times\Phi_1)+(n\times B(H^{\epsilon}-H ),\nabla\times\Phi_2)|\leq C(\|\omega_ v^{\epsilon}-\omega_v \|^2+\|\omega_ H^{\epsilon}-\omega_H \|^2).
\end{equation}
Based on \eqref{5.56} and \eqref{5.60}, we have
\begin{equation}\label{5.61}
I_2\leq C(\|\omega_ v^{\epsilon}-\omega_v \|^2+\|\omega_ H^{\epsilon}-\omega_H \|^2).
\end{equation}
Finally, we need to estimate the term $I_3$, i.e.
\begin{equation}\nonumber
\Big{|}\int_{\partial\Omega}(Bv-n\times\omega_v)\cdot P\Phi_1+\int_{\partial\Omega}(BH-n\times\omega_H)\cdot P\Phi_2\Big{|}.
\end{equation}
We observe that the estimate is trivial if the ideal MHD satisfies the same boundary condition as that the MHD does. However, $[Bv]_\tau-n\times\omega_v$ and $[BH]_\tau-n\times\omega_H$ may be not equal to zero. As a result, the boundary layer may occur, so we will experience more complicate estimates. Similar to the above, we get
\begin{align*}
I_3=\,&\Big{|}\int_{\partial\Omega}(Bv-n\times\omega_v)\cdot P\Phi_1+\int_{\partial\Omega}(BH-n\times\omega_H)\cdot P\Phi_2\Big{|}\nonumber\\
=\,&|\int_{\partial\Omega}(n\times(Bv-n\times\omega_v))\cdot (n\times P\Phi_1)+\int_{\partial\Omega}(n\times(BH-n\times\omega_H))\cdot (n\times P\Phi_2)|\nonumber\\
=\,&|(n\times(Bv-n\times\omega_v),\nabla\times\Phi_1)+(n\times(BH-n\times\omega_H),\nabla\times\Phi_2)\nonumber\\
&-(\nabla\times (n\times(Bv-n\times\omega_v)),P\Phi_1)-(\nabla\times(n\times(BH-n\times\omega_H)),P\Phi_2)|\nonumber\\
\leq&\,I_{31}+ I_{32},
\end{align*}
where
\begin{align*}
I_{31}=&\,|(n\times(Bv-n\times\omega_v),\nabla\times\Phi_1)+(n\times(BH-n\times\omega_H),\nabla\times\Phi_2)|,\\
I_{32}=&\,|(\nabla\times (n\times(Bv-n\times\omega_v)),P\Phi_1)+(\nabla\times(n\times(BH-n\times\omega_H)),P\Phi_2)|.
\end{align*}

We first deal with the term $I_{31}$ and note that
\begin{equation}\nonumber
I_{31}\leq L_1+L_2+L_3+L_4+L_5+L_6,
\end{equation}
where
\begin{align*}
L_1=&\,|(n\times(Bv-n\times\omega_v),\nabla\times (v\cdot\nabla(v^{\epsilon}-v))-\nabla\times(H\cdot\nabla(H^{\epsilon}-H)))|,\\
L_2=&\,|(n\times(Bv-n\times\omega_v),\nabla\times((v^{\epsilon}-v)\cdot\nabla v)-\nabla\times((H^{\epsilon}-H)\cdot\nabla H))|,\\
L_3=&\,|(n\times(Bv-n\times\omega_v),\nabla\times((v^{\epsilon}-v)\cdot\nabla(v^{\epsilon}-v))\nonumber\\
&-\nabla\times((H^{\epsilon}-H)\cdot\nabla(H^{\epsilon}-H)))|,\\
L_4=&\,|(n\times(BH-n\times\omega_H),\nabla\times(v\cdot\nabla(H^{\epsilon}-H))-\nabla\times(H\cdot\nabla(v^{\epsilon}-v)))|,\\
L_5=&\,|(n\times(BH-n\times\omega_H),\nabla\times((v^{\epsilon}-v)\cdot\nabla H)-\nabla\times((H^{\epsilon}-H)\cdot\nabla v))|,\\
L_6=&\,|(n\times(BH-n\times\omega_H),\nabla\times((H^{\epsilon}-H)\cdot\nabla (v^{\epsilon}-v))\nonumber\\
&-\nabla\times((v^{\epsilon}-v)\cdot\nabla (H^{\epsilon}-H)))|.
\end{align*}
We have
\begin{align*}
L_1=&\,|(n\times(Bv-n\times\omega_v),\nabla\times (v\cdot\nabla(v^{\epsilon}-v))-\nabla\times(H\cdot\nabla(H^{\epsilon}-H)))|\nonumber\\
=&\,|(n\times(Bv-n\times\omega_v),v\cdot\nabla(\omega_v^{\epsilon}-\omega_v)-H\cdot\nabla(\omega_H^{\epsilon}-\omega_H)\nonumber\\
&+[\nabla\times,v\cdot\nabla](v^{\epsilon}-v)-[\nabla\times,H\cdot\nabla](H^{\epsilon}-H))|.
\end{align*}
Here, we first deal with the terms which contain higher derivatives and get that
\begin{align*}
&\,|(n\times(Bv-n\times\omega_v),v\cdot\nabla(\omega_v^{\epsilon}-\omega_v)-H\cdot\nabla(\omega_H^{\epsilon}-\omega_H)|\\
=&\,|(v\cdot\nabla(n\times(Bv-n\times\omega_v)),\omega_v^{\epsilon}-\omega_v)-(H\cdot\nabla(n\times(Bv-n\times\omega_v)),\omega_H^{\epsilon}-\omega_H)|\nonumber\\
\leq&\,\Big{|}\int_{\partial\Omega} n\times(v^{\epsilon}-v)\cdot(v\cdot\nabla(n\times(Bv-n\times\omega_v)))\nonumber\\
&+(\nabla\times(v\cdot\nabla(n\times(Bv-n\times\omega_v))),v^{\epsilon}-v)\Big{|}\nonumber\\
&+\Big{|}\int_{\partial\Omega} n\times(H^{\epsilon}-H)\cdot(H\cdot\nabla(n\times(Bv-n\times\omega_v)))\nonumber\\
&+(\nabla\times(H\cdot\nabla(n\times(Bv-n\times\omega_v))),H^{\epsilon}-H))\Big{|}\nonumber\\
\leq&\, C\,(|v^{\epsilon}-v|_{L^2(\partial\Omega)}+|H^{\epsilon}-H|_{L^2(\partial\Omega)}+\|v^{\epsilon}-v\|+\|H^{\epsilon}-H\|)\nonumber\\
\leq&\, C\,(\|\omega_v^{\epsilon}-\omega_v\|^2+\|\omega_H^{\epsilon}-\omega_H\|^2+\epsilon^\frac{1}{2}).
\end{align*}
We also note that each component of $[\nabla\times,v\cdot\nabla](v^{\epsilon}-v)-[\nabla\times,H\cdot\nabla](H^{\epsilon}-H)$ is a combination of such terms $\partial_iv\cdot\nabla(v^{\epsilon}-v)_j$ and $\partial_kH\cdot\nabla(H^{\epsilon}-H)_l$. Without loss of generality, we consider the term
\begin{equation}\nonumber
((n\times(Bv-n\times\omega_v))_m,\partial_iv\cdot\nabla(v^{\epsilon}-v)_j-\partial_kH\cdot\nabla(H^{\epsilon}-H)_l).
\end{equation}
Since $\nabla\cdot\partial_iv=0$ and $\nabla\cdot\partial_kH=0$, we have
\begin{align*}
&|((n\times(Bv-n\times\omega_v))_m,\partial_iv\cdot\nabla(v^{\epsilon}-v)_j-\partial_kH\cdot\nabla(H^{\epsilon}-H)_l)|\nonumber\\
=\,&\big{|}(\partial_iv,\nabla((v^{\epsilon}-v)_j(n\times(Bv-n\times\omega_v))_m))\nonumber\\
&-(\partial_iv\cdot\nabla(n\times(Bv-n\times\omega_v))_m,(v^{\epsilon}-v)_j)\nonumber\\
&-(\partial_kH,\nabla((H^{\epsilon}-H)_l(n\times(Bv-n\times\omega_v))_m))\nonumber\\
&+(\partial_kH\cdot\nabla(n\times(Bv-n\times\omega_v))_m,(H^{\epsilon}-H)_l)\big{|}\nonumber\\
=\,&\Big{|}\int_{\partial\Omega}(v^{\epsilon}-v)_j(n\times(Bv-n\times\omega_v))_m\partial_iv\cdot n\nonumber\\
&-\int_{\partial\Omega}(H^{\epsilon}-H)_l(n\times(Bv-n\times\omega_v))_m\partial_kH\cdot n\nonumber\\
&-(\partial_iv\cdot\nabla(n\times(Bv-n\times\omega_v))_m,(v^{\epsilon}-v)_j)\nonumber\\
&+(\partial_iH\cdot\nabla(n\times(Bv-n\times\omega_v))_m,(H^{\epsilon}-H)_l)\Big{|}\nonumber\\
\leq& \,C\,(|v^{\epsilon}-v|_{L^1(\Omega)}+|H^{\epsilon}-H|_{L^1(\Omega)}+\|v^{\epsilon}-v\|+\|H^{\epsilon}-H\|)\nonumber\\
\leq&\,C\,(\|\omega_v^{\epsilon}-\omega_v\|^2+\|\omega_H^{\epsilon}-\omega_H\|^2+\epsilon^\frac{1}{2}).
\end{align*}
Hence, we obtain
\begin{equation}\nonumber
L_1\leq C(\|\omega_v^{\epsilon}-\omega_v\|^2+\|\omega_H^{\epsilon}-\omega_H\|^2+\epsilon^\frac{1}{2}).
\end{equation}
Compared to $L_1$, both $L_2$ and $L_3$ can be easily estimated. In fact, we have
\begin{align*}
L_2=&\,|(n\times(Bv-n\times\omega_v),\nabla\times((v^{\epsilon}-v)\cdot\nabla v)-\nabla\times((H^{\epsilon}-H)\cdot\nabla H))|\nonumber\\
=&\,\Big{|}\int_{\partial\Omega}(n\times(Bv-n\times\omega_v))(n\times((v^{\epsilon}-v)\cdot\nabla v)-n\times((H^{\epsilon}-H)\cdot\nabla H))\nonumber\\
&+(\nabla\times(n\times(Bv-n\times\omega_v)),(v^{\epsilon}-v)\cdot\nabla v-(H^{\epsilon}-H)\cdot\nabla H)\Big{|}\nonumber\\
\leq\,& C(|v^{\epsilon}-v|_{L^1(\partial\Omega)}+|H^{\epsilon}-H|_{L^1(\partial\Omega)}+\|v^{\epsilon}-v\|+\|H^{\epsilon}-H\|)\nonumber\\
\leq\,& C(\|\omega_v^{\epsilon}-\omega_v\|^2+\|\omega_H^{\epsilon}-\omega_H\|^2+\epsilon^\frac{1}{2}),\\
L_3=\,&\,\big{|}(n\times(Bv-n\times\omega_v),\nabla\times((v^{\epsilon}-v)\cdot\nabla(v^{\epsilon}-v))\nonumber\\
&-\nabla\times((H^{\epsilon}-H)\cdot\nabla(H^{\epsilon}-H)))\big{|}\nonumber\\
=\,&\,\big{|}(n\times(Bv-n\times\omega_v),[\nabla\times,(v^{\epsilon}-v)\cdot\nabla](v^{\epsilon}-v)\nonumber\\
&-[\nabla\times,(H^{\epsilon}-H)\cdot\nabla](H^{\epsilon}-H))\nonumber\\
&+(n\times(Bv-n\times\omega_v),(v^{\epsilon}-v)\cdot\nabla(\omega_v^{\epsilon}-\omega_v)-(H^{\epsilon}-H)\cdot\nabla(\omega_H^{\epsilon}-\omega_H))\big{|}\nonumber\\
=\,&\,\big{|}\big{(}n\times(Bv-n\times\omega_v),[\nabla\times,(v^{\epsilon}-v)\cdot\nabla](v^{\epsilon}-v)\nonumber\\
&-[\nabla\times,(H^{\epsilon}-H)\cdot\nabla](H^{\epsilon}-H)\big{)}\nonumber\\
&-\big{(}(v^{\epsilon}-v)\cdot\nabla(n\times(Bv-n\times\omega_v)),\omega_v^{\epsilon}-\omega_v\big{)}\nonumber\\
&+\big{(}(H^{\epsilon}-H)\cdot\nabla(n\times(Bv-n\times\omega_v),\omega_H^{\epsilon}-\omega_H\big{)}\big{|}\nonumber\\
\leq\,& C(\|\omega_v^{\epsilon}-\omega_v\|^2+\|\omega_H^{\epsilon}-\omega_H\|^2).
\end{align*}
We find that $L_4$, $L_5$ and $L_6$ have similar structures to $L_1$, $L_2$ and $L_3$ respectively, so we can get
\begin{align*}
&L_4\leq C(\|\omega_v^{\epsilon}-\omega_v\|^2+\|\omega_H^{\epsilon}-\omega_H\|^2+\epsilon^\frac{1}{2}),\\
&L_5\leq C(\|\omega_v^{\epsilon}-\omega_v\|^2+\|\omega_H^{\epsilon}-\omega_H\|^2+\epsilon^\frac{1}{2}),\\
&L_6\leq C(\|\omega_v^{\epsilon}-\omega_v\|^2+\|\omega_H^{\epsilon}-\omega_H\|^2).
\end{align*}
From the estimates of $L_i \,(i=1,\cdots, 6)$, we get
\begin{equation}\nonumber
I_{31}\leq C(\|\omega_v^{\epsilon}-\omega_v\|^2+\|\omega_H^{\epsilon}-\omega_H\|^2+\epsilon^\frac{1}{2}).
\end{equation}

Now, it remains to estimate the term $I_{32}$, i.e.
\begin{equation}\nonumber
|(\nabla\times (n\times(Bv-n\times\omega_v)),P\Phi_1)+(\nabla\times(n\times(BH-n\times\omega_H)),P\Phi_2)|.
\end{equation}
First, we consider $(\nabla\times (n\times(Bv-n\times\omega_v)),P\Phi_1)$. Because it involves Leray projection, some terms which contain higher derivatives of $v^{\epsilon}-v$ or $H^{\epsilon}-H$ can not be estimated easily. We have the observations
\begin{align}
&v\cdot\nabla(v^{\epsilon}-v)-(v^{\epsilon}-v)\cdot\nabla v=\nabla\times((v^{\epsilon}-v)\times v),\label{5.80}\\
&H\cdot\nabla(H^{\epsilon}-H)-(H^{\epsilon}-H)\cdot\nabla H=\nabla\times((H^{\epsilon}-H)\times H).\label{5.81}
\end{align}
Since $(v^{\epsilon}-v)\cdot n=0$, $v\cdot n=0$, $(H^{\epsilon}-H)\cdot n=0$ and $H\cdot n=0$, it means that
\begin{equation}\label{5.82}
(v^{\epsilon}-v)\times v=\lambda_1n,\quad(H^{\epsilon}-H)\times H=\lambda_2n.
\end{equation}
Due to \eqref{5.80}-\eqref{5.82}, we easily obtain
\begin{equation}\nonumber
\nabla\times((v^{\epsilon}-v)\times v)\in \mathbb{H},\quad\nabla\times((H^{\epsilon}-H)\times H)\in\mathbb{H},
\end{equation}
where $\mathbb{H}$ is Leray projection space. Thus we have the following equality
\begin{align*}
P\Phi_1=&v\cdot\nabla(v^{\epsilon}-v)-(v^{\epsilon}-v)\cdot\nabla v+P\Phi^1_v\nonumber\\
&-(H\cdot\nabla(H^{\epsilon}-H)-(H^{\epsilon}-H)\cdot\nabla H)-P\Phi^1_H,
\end{align*}
where
\begin{align*}
&P\Phi^1_v=P[2(v^{\epsilon}-v)\cdot\nabla v+(v^{\epsilon}-v)\cdot\nabla(v^{\epsilon}-v)],\\
&P\Phi^1_H=P[2(H^{\epsilon}-H)\cdot\nabla H+(H^{\epsilon}-H)\cdot\nabla(H^{\epsilon}-H)].
\end{align*}
Hence, we have
\begin{align*}
&(\nabla\times (n\times(Bv-n\times\omega_v)),P\Phi_1)\nonumber\\
=&(\nabla\times (n\times(Bv-n\times\omega_v)),P\Phi^1_v)-(\nabla\times (n\times(Bv-n\times\omega_v)),P\Phi^1_H)\nonumber\\
&+(\nabla\times (n\times(Bv-n\times\omega_v)),v\cdot\nabla(v^{\epsilon}-v)-(v^{\epsilon}-v)\cdot\nabla v\nonumber\\
&-(H\cdot\nabla(H^{\epsilon}-H)-(H^{\epsilon}-H)\cdot\nabla H)).
\end{align*}
First, we have
\begin{align}
\|P\Phi^1_v\|\leq C&\,(\|v\|_{W^{1,\infty}}+\|v^{\epsilon}-v\|_{W^{1,\infty}})\|v^{\epsilon}-v\|,\label{5.89}\\
\|P\Phi^1_H\|\leq C&\,(\|H\|_{W^{1,\infty}}+\|H^{\epsilon}-H\|_{W^{1,\infty}})\|H^{\epsilon}-H\|,\label{5.90}\\
|(\nabla\times (n\times(&Bv-n\times\omega_v)),P\Phi^1_v)-(\nabla\times (n\times(Bv-n\times\omega_v)),P\Phi^1_H)|\nonumber\\
&\leq C\|v\|_{H^2}(\|P\Phi^1_v\|+\|P\Phi^1_H\|).\label{5.88}
\end{align}
From \eqref{5.89}-\eqref{5.88} and Lemma \ref{L5.1}, we get
\begin{equation}\nonumber
|(\nabla\times (n\times(Bv-n\times\omega_v)),P\Phi^1_v)-(\nabla\times (n\times(Bv-n\times\omega_v)),P\Phi^1_H)|\leq C\epsilon^\frac{3}{4}.
\end{equation}
Next, note that
\begin{align*}
&|(\nabla\times (n\times(Bv-n\times\omega_v)),v\cdot\nabla(v^{\epsilon}-v)|\nonumber\\
=&|(v\cdot\nabla(\nabla\times (n\times(Bv-n\times\omega_v))),v^{\epsilon}-v)|\leq C\|v\|_{H^3}\|v^{\epsilon}-v\|\leq C\epsilon^\frac{3}{4}.
\end{align*}
Similarly, we obtain
\begin{align*}
&|(\nabla\times (n\times(Bv-n\times\omega_v)),H\cdot\nabla(H^{\epsilon}-H)|\nonumber\\
=&|(H\cdot\nabla(\nabla\times (n\times(Bv-n\times\omega_v))),H^{\epsilon}-H)|\leq C\|v\|_{H^3}\|H^{\epsilon}-H\|\leq C\epsilon^\frac{3}{4}.
\end{align*}
At the same time, we get directly that
\begin{align*}
|(\nabla\times (n\times(Bv-n\times\omega_v)),(&v^{\epsilon}-v)\cdot\nabla v+(H^{\epsilon}-H)\cdot\nabla H))|\\
\leq &\,C(\|v^{\epsilon}-v\|+\|H^{\epsilon}-H\|)\leq C\epsilon^\frac{3}{4}.
\end{align*}
Therefore,
\begin{equation}\nonumber
|(\nabla\times (n\times(Bv-n\times\omega_v)),P\Phi_1)|\leq C\epsilon^\frac{3}{4}.
\end{equation}

By using the same methods as above, we observe
\begin{equation}\nonumber
P\Phi_2=\Phi_2.
\end{equation}
Hence, we get
\begin{align*}
|(\nabla\times(n\times(BH-n\times\omega_H)),P\Phi_2)|=|(\nabla\times(n\times(BH-n\times\omega_H)),\Phi_2)|\leq C\epsilon^\frac{3}{4}.
\end{align*}
Finally, we have
\begin{equation}\nonumber
I_{32}\leq C\epsilon^\frac{3}{4}.
\end{equation}
Thus, we conclude that
\begin{equation}\label{5.99}
I_3\leq C(\|\omega_v^{\epsilon}-\omega_v\|^2+\|\omega_H^{\epsilon}-\omega_H\|^2+\epsilon^\frac{1}{2}).
\end{equation}
In conclusion, it follows from \eqref{5.48}, \eqref{5.61} and \eqref{5.99} that
\begin{align*}
\frac{1}{2}\frac{d}{dt}E+\frac{\epsilon}{2}(\| P\Delta(v^{\epsilon}&-v) \|^2
+\| P\Delta( H^{\epsilon}-H)\|^2)\nonumber\\
&\leq C(\|\omega_ v^{\epsilon}-\omega_v \|^2+\|\omega_ H^{\epsilon}-\omega_H \|^2+\epsilon^\frac{1}{2}).
\end{align*}

Now, we need to deal with the left terms in the above inequality. Let us recall that
\begin{align*}
E=\,&\|\omega_ v^{\epsilon}-\omega_v \|^2+\|\omega_ H^{\epsilon}-\omega_H\|^2\nonumber\\
&-\int_{\partial\Omega}B(v^{\epsilon}-v )\cdot(v^{\epsilon}-v )-2\int_{\partial\Omega}(v^{\epsilon}-v)\cdot(Bv-n\times\omega_v)\nonumber\\
&-\int_{\partial\Omega}B(H^{\epsilon}-H )\cdot(H^{\epsilon}-H )-2\int_{\partial\Omega}(H^{\epsilon}-H)\cdot(BH-n\times\omega_H).
\end{align*}
We note that
\begin{align*}
&\Big{|}\int_{\partial\Omega}B(v^{\epsilon}-v )\cdot(v^{\epsilon}-v )+\int_{\partial\Omega}B(H^{\epsilon}-H )\cdot(H^{\epsilon}-H )\Big{|}\nonumber\\
\leq \,&C\,(|v^{\epsilon}-v|^2_{L^2({\partial\Omega})}+|H^{\epsilon}-H|^2_{L^2({\partial\Omega})})\nonumber\\
\leq \,&\delta\,(\|\omega_ v^{\epsilon}-\omega_v\|^2+\|\omega_ H^{\epsilon}-\omega_H\|^2)+C_\delta(\| v^{\epsilon}-v\|^2+\|H^{\epsilon}-H\|^2),\\
&\Big{|}2\int_{\partial\Omega}(v^{\epsilon}-v)\cdot(Bv-n\times\omega_v)+2\int_{\partial\Omega}(H^{\epsilon}-H)\cdot(BH-n\times\omega_H)\Big{|}\nonumber\\
\leq\,& \,C\,(|v^{\epsilon}-v|_{L^1({\partial\Omega})}+|H^{\epsilon}-H|_{L^1({\partial\Omega})})\nonumber\\
\leq\,&\,\delta\,(\|\omega_ v^{\epsilon}-\omega_v\|^2+\|\omega_ H^{\epsilon}-\omega_H\|^2)+C_\delta(\| v^{\epsilon}-v\|^2+\|H^{\epsilon}-H\|^2)
\end{align*}
for some $\delta$ small enough. Consequently, we get
\begin{align*}
&\|\omega_ v^{\epsilon}-\omega_v\|^2+\|\omega_ H^{\epsilon}-\omega_H\|^2+\frac{\epsilon}{2}\int_0^t(\| P\Delta(v^{\epsilon}-v) \|^2
+\| P\Delta( H^{\epsilon}-H)\|^2)\nonumber\\
\leq\,&  C\int_0^t(\|\omega_ v^{\epsilon}-\omega_v \|^2+\|\omega_ H^{\epsilon}-\omega_H \|^2)+C\epsilon^\frac{1}{2}.
\end{align*}
By using Gronwall's inequality, we have
\begin{equation}
\|\omega_ v^{\epsilon}-\omega_v\|^2+\|\omega_ H^{\epsilon}-\omega_H\|^2\leq C\epsilon^\frac{1}{2}\quad \text{on} \quad[0,T_2].
\end{equation}
Thus
\begin{align*}
\|v^{\epsilon}-v\|^2_{H^1}&+\| H^{\epsilon}-H\|^2_{H^1}\\
&+\epsilon\int_0^t(\| P\Delta(v^{\epsilon}-v)\|^2
+\|P\Delta( H^{\epsilon}-H)\|^2)\leq\, C\epsilon^\frac{1}{2}.
\end{align*}
From Lemmas \ref{L5.1} and \ref{L4.3}, we get
\begin{equation}
\epsilon\int_0^t(\| v^{\epsilon}-v \|^2_{H^2}
+\|H^{\epsilon}-H\|_{H^2}^2)\leq\, C\epsilon^\frac{1}{2}.
\end{equation}

Note that the following inequality holds
\begin{equation}
\|\nabla(u^{\epsilon}-u)\|^p_{L^p}\leq C\|\nabla(u^{\epsilon}-u)\|^{p-2}_{L^\infty}\|\nabla(u^{\epsilon}-u)\|^2.
\end{equation}
Hence, we obtain
\begin{equation}
\|\nabla(v^{\epsilon}-v)\|^p_{L^p}+\|\nabla(H^{\epsilon}-H)\|^p_{L^p}\leq C\epsilon^{\frac{1}{2}}.
\end{equation}
This completes the proof of Lemma \ref{L5.2}.
\end{proof}
From Lemmas \ref{L5.1} and \ref{L5.2}, we easily get Theorem $\ref{Th3}$.

\medskip
{\bf Acknowledgements:}
Li is supported partially by NSFC (Grant No. 11271184) and
   PAPD.


\end{document}